\newtheorem{lemma}{Lemma}[section]
\newtheorem{theorem}{Theorem}[section]
\newtheorem{rem}[theorem]{Remark}
\newtheorem{definition}{Definition}[section]
\newtheorem{claim}{Claim}[section]
\renewcommand{\thelemma}{\thesection.\arabic{lemma}}
\numberwithin{equation}{section}
\newcommand{\ve}{\varepsilon}
\newcommand{\R}{\mathbb R}
\newcommand{\N}{\mathbb N}
\newcommand{\jb}[1]{\left\langle #1 \right\rangle}
\newcommand{\pa}{\partial}
\newcommand{\en}{{\mathcal E}}
\newcommand{\Norm}[1]{|\!|\!|#1|\!|\!|}
\DeclareMathOperator{\supp}{\rm supp}
\title[
 The exterior Neumann problems
 of semilinear wave equations in $2$D] {Almost global existence
for exterior Neumann problems of semilinear wave equations in $2$D}
\author[S. Katayama]{Soichiro Katayama}
\address{Soichiro Katayama\\ Department of Mathematics,
Wakayama University, 930 Sakaedani,
Wakayama 640-8510, Japan
Tel: +81-73-457-7343}
\email{katayama@center.wakayama-u.ac.jp}
\author[H. Kubo]{Hideo Kubo}
\address{Hideo Kubo\\ Division of Mathematics,
Graduate School of Information Sciences,
Tohoku University, Sendai 980-8579, Japan
Tel: +81-22-795-4628}
\email{kubo@math.is.tohoku.ac.jp}
\author[S. Lucente]{Sandra Lucente}
\address{Sandra Lucente \\ Dipartimento di Matematica,
Universit\`a degli Studi di Bari,  Via Orabona 4, 70125 Bari,
Italy. Tel. +39-080-5442275} \email{lucente@dm.uniba.it}
\begin{document}
\baselineskip16pt
\begin{abstract}
The aim of this article is to prove  an \lq\lq almost\rq\rq \, global
existence result for some semilinear wave equations in the plane outside a bounded
convex obstacle with the Neumann boundary condition.
\end{abstract}

\maketitle

\thispagestyle{empty}

%\tableofcontents

\section{Introduction}
Let ${\mathcal O}$ be an open
bounded convex domain with smooth boundary
in ${\R}^2$ and put $\Omega:={\R}^2 \setminus
\overline{\mathcal O}$.
Let $\partial_\nu$ denote the outer normal derivative on $\partial \Omega$.

We consider the mixed problem for semilinear wave equations in $\Omega$ with the Neumann boundary condition:
\begin{equation}\label{eq.PMN}
\begin{array}{ll}
 (\partial_t^2-\Delta) u =G(\partial_t u, \nabla_x u), & (t,x) \in (0,\infty)\times \Omega,\\
 \partial_\nu u(t,x)=0, &(t,x) \in (0,\infty)\times \partial\Omega,\\
 u(0,x)=\phi(x), &x\in \Omega,\\
 \partial_t u(0,x)=\psi(x), & x\in \Omega,\\
\end{array}
\end{equation}
where $\phi$ and $\psi$
are  ${\mathcal C}^\infty$-functions
compactly supported in $\overline\Omega$,
and $G: \R^3\to \R$ is a nonlinear function.
We will study the case of the cubic nonlinearity with small initial data and obtain an estimate from below
for the lifespan of the solution in terms of the size of the initial data.
Here by the expression \lq\lq small initial data'' we mean that there exist $m\in \N$, $s\in \R$ and a small number $\ve>0$ such that
\begin{equation*}
\|\phi\|_{H^{m+1,s}(\Omega)}+\|\psi\|_{H^{m,s}(\Omega)}\le \ve,
\end{equation*}
where the weighted Sobolev space $H^{m,s}(\Omega)$ is endowed with the norm
\begin{equation}\label{eq.datanorm}
\|\varphi\|_{H^{m,s}(\Omega)}^2:=\sum_{|\alpha|\le m}\int_{\Omega}
 (1+|x|^2)^s
|\partial_x^\alpha \varphi(x)|^2 d x.
\end{equation}
A large amount of works has been devoted to the study of the mixed problem for nonlinear wave equations in an exterior domain $\Omega \subset \R^n$ for $n\ge 3$, mostly with the Dirichlet
boundary condition.
To our knowledge very few results deal with the global existence
or the lifespan estimate for the exterior mixed problems of nonlinear wave equations in $2$D; in \cite{SSW} the global existence for the case of the Dirichlet boundary condition and the nonlinear terms depending only
on $u$ is considered;
in \cite{KP} one of the authors obtained an almost global
existence result for small initial data under the assumptions that $|G(\partial u)|\simeq (\partial u)^3$, the obstacle is star-shaped and
the boundary condition is of the Dirichlet type (see Remark~\ref{Rem1.4} below for the detail).

Here we will treat the problem with the Neumann boundary condition
in $2$D and obtain an analogous result to \cite{KP}. However, because we have a weaker decay property for the solution to the Neumann exterior problem of linear wave equations in $2$D
(see Secchi and Shibata \cite{SeSh03}), we will obtain
a slightly worse lifespan estimate than in the Dirichlet case.

For simplicity, we assume that
the nonlinear function $G$  in \eqref{eq.PMN} is a homogeneous polynomial of cubic order.
Equivalently, writing $\partial u=(\partial_tu, \nabla_x u)$, this means that
\begin{equation}\label{eq.semiG}
G(\partial u)=\sum_{0\le \alpha\le \beta\le \gamma\le 2} g_{\alpha,\beta,\gamma}(\partial_\alpha u)(\partial_\beta u)(\partial_\gamma u)
\end{equation}
with $g_{\alpha,\beta,\gamma}\in \R$ and $(\partial_0,\partial_1,\partial_2):=(\partial_t, \partial_{x_1},\partial_{x_2})$.

\smallskip
As usual, to consider smooth solutions to the mixed problem, we need some compatibility conditions (see \cite{KaKu08}).
Note that, for a nonnegative integer $k$ and a smooth function $u=u(t,x)$ on $[0,T)\times \Omega$, we have
\begin{equation}
\label{CC0}
\partial_t^k\left(G(\partial u)\right)=G^{(k)}[u, \partial_t u, \ldots,
\partial_t^{k+1}u],
\end{equation}
where for $\mathcal C^1$ functions  $(p_0, p_1, \ldots, p_{k+1})$ we put
\begin{align*}
G^{(k)}[p_0, p_1, \ldots, p_{k+1}]=&\sum_{k_1+k_2+k_3=k}
g_{0, 0, 0} p_{k_1+1}p_{k_2+1}p_{k_3+1}+
\sum_{k_1+k_2+k_3=k} \sum_{\gamma=1}^2
g_{0,0,\gamma}p_{k_1+1}p_{k_2+1}(\partial_\gamma p_{k_3})\\
&{}+\sum_{k_1+k_2+k_3=k} \sum_{1\le \beta\le \gamma\le 2}
g_{0,\beta,\gamma} p_{k_1+1}(\partial_\beta p_{k_2})(\partial_\gamma p_{k_3})\\
&{}+\sum_{k_1+k_2+k_3=k} \sum_{1\le \alpha\le \beta\le \gamma\le 2}
g_{\alpha,\beta,\gamma} (\partial_\alpha p_{k_1})(\partial_\beta p_{k_2})(\partial_\gamma p_{k_3}).
\end{align*}
\begin{definition}\label{CCN}
To the mixed problem \eqref{eq.PMN} we can associate the recurrence sequence $\{v_j\}_{j\in\N^*}$ with $v_j:\overline{\Omega}\to \R$ such that
$$
\begin{array}{l}
 v_0= \phi,\\
 v_1= \psi,\\
 v_j=\Delta v_{j-2}+G^{(j-2)}[v_0, v_1, \ldots, v_{j-1}], \quad j\ge 2,
\end{array}
$$
where $\N^*$ denotes the set of nonnegative integers
and $G^{(k)}$ is defined as above {\rm(}cf.~\eqref{CC0}{\rm)}.
We say that $(\phi, \psi, G)$ satisfies the compatibility
condition  of infinite order in
$\Omega$ for \eqref{eq.PMN} if $\phi,\psi\in {\mathcal C}^\infty(\overline{\Omega})$,
and one has
$$
\partial_\nu v_j(x)=0, \quad x\in \partial \Omega
$$
for all $j\in \N^*$.
\end{definition}

Our aim is to prove the following result.

\begin{theorem}\label{thm.mainsemi}
Let ${\mathcal O}$ be a convex obstacle. Consider the  semilinear mixed problem \eqref{eq.PMN} with given
compactly supported initial data
$(\phi,\psi)\in \mathcal C^\infty(\overline{\Omega})\times  {\mathcal C}^\infty(\overline{\Omega}) $ and a given
nonlinear term $G(\partial u)$ which is a
homogeneous polynomial of cubic order as in \eqref{eq.semiG}.
Assume that $(\phi, \psi,  G)$ satisfies the compatibility condition
 of infinite order in $\Omega$ for \eqref{eq.PMN}.

Under these assumptions, there exist $\varepsilon_0>0$, $m\in \N$, $s\in \R$ such that, if $\varepsilon \in (0,\varepsilon_0]$ and
\begin{equation}\label{eq.smalldata.semi}
\|\phi\|_{H^{m+1,s}(\Omega)}+\|\psi\|_{H^{m,s}(\Omega)} \le \ve,
\end{equation}
then the mixed problem
\eqref{eq.PMN} admits a unique solution $u \in
{\mathcal C}^\infty([0,T_\varepsilon)\times \Omega)$ with
\begin{equation}\label{eq.lifespanT1.semi}
T_\varepsilon \ge \exp(C \varepsilon^{-1}),
\end{equation}
where $C>0$ is a suitable constant which is uniform with respect to $\ve\in (0,\ve_0]$.
\end{theorem}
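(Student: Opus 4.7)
The plan is to prove Theorem~\ref{thm.mainsemi} by a standard continuity/bootstrap scheme, in which weighted energy estimates are combined with pointwise decay estimates for the exterior Neumann problem in $2$D. The three ingredients are (i) an admissible family of vector fields $\Gamma$, (ii) a linear decay estimate based on the Secchi--Shibata result \cite{SeSh03} for the Neumann wave equation in the exterior of a convex obstacle, and (iii) a closing argument that exploits the cubic structure of $G$.

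As the set of vector fields I would take $\Gamma=\{\pa_t,\pa_{x_1},\pa_{x_2},R\}$ with $R:=x_1\pa_{x_2}-x_2\pa_{x_1}$, together with a smooth cut-off $\chi\in\mathcal C^\infty_c(\R^2)$ equal to $1$ on a neighbourhood of $\overline{\mathcal O}$. The translations and $\pa_t$ preserve the Neumann condition, but $R$ is in general not tangent to $\pa\Omega$, so I would split $u=\chi u+(1-\chi)u$: the piece $(1-\chi)u$ satisfies a free wave equation in $\R^2$, on which every $\Gamma^I$ acts without boundary obstruction, while $\chi u$ has uniformly bounded spatial support and is controlled by local energy decay for the Neumann problem. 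The infinite-order compatibility condition of Definition~\ref{CCN} ensures that all iterated time derivatives $\pa_t^j u(0,\cdot)$ lie in the natural Sobolev spaces and verify the Neumann condition, so that $\Gamma^I u\in\mathcal C^\infty$ and every boundary integral is meaningful. For suitable $m,m_0\in\N$ I would then define a master norm of the shape
\[
M(T):=\sum_{|I|\le m}\sup_{0\le t\le T}\|\pa \Gamma^I u(t)\|_{L^2(\Omega)}+\sum_{|I|\le m-m_0}\sup_{0\le t\le T}\frac{(1+t)^{1/2}}{w(t)}\|\Gamma^I u(t)\|_{L^\infty(\Omega)},
\]
where the weight $w(t)$ absorbs the logarithmic loss in the 2D Neumann decay.

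Step~1 is the weighted energy estimate: applying $\Gamma^I$ to \eqref{eq.PMN}, using the Neumann condition to kill the boundary terms from $\pa_t$ and translations, and absorbing the commutator terms involving $R$ via $\chi$ and local energy decay, yields
\[
\sum_{|I|\le m}\|\pa \Gamma^I u(t)\|_{L^2}^2\lesssim \ve^2+\int_0^t \|\pa u\|_{L^\infty}^2 \sum_{|I|\le m}\|\pa \Gamma^I u(s)\|_{L^2}^2\, ds.
\]
Step~2 derives pointwise decay: via the Neumann wave representation, Secchi--Shibata decay, and a $\Gamma$-based Sobolev inequality, one arrives at $(1+t)^{1/2}|u(t,x)|\lesssim w(t)\,M(t)+\text{(cubic forcing)}$. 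Step~3 closes the bootstrap under $M(T)\le A\ve$: the cubic structure yields $\|\pa u\|_{L^\infty}^2\lesssim (A\ve)^2 w(t)^2/(1+t)$, so the energy integral is controlled by $(A\ve)^2\int_0^T w(s)^2/(1+s)\,ds$; with $w(s)\sim(\log(2+s))^{1/2}$ this is of order $(\log(2+T))^2$, producing $M(T)\lesssim \ve\exp\bigl(C(A\ve)^2(\log(2+T))^2\bigr)$. Requiring the right-hand side to stay below $A\ve/2$ amounts to $\ve\log(2+T)\lesssim 1$, i.e.\ $T\le \exp(C\ve^{-1})$, which is exactly \eqref{eq.lifespanT1.semi}.

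The hard part will be Step~2: the absence of Huygens' principle in 2D together with the weaker Neumann decay (compared with the Dirichlet case of \cite{KP}) forces the extra weight $w(t)$, which is precisely why the lifespan here is $\exp(C\ve^{-1})$ rather than the $\exp(C\ve^{-2})$ typical of the 2D Dirichlet setting. Controlling the commutators between the rotation $R$ and the boundary through the cut-off $\chi$, while propagating smoothness via the infinite-order compatibility condition, is the delicate technical heart of the argument.
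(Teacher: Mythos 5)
Your overall architecture (weighted energy estimates, pointwise decay via the cut-off method and local energy decay, a bootstrap closed by the cubic structure, and the log-loss-driven $\exp(C\ve^{-1})$ lifespan) matches the paper's strategy, and your Step~3 arithmetic — tracing $\int_0^T w(s)^2(1+s)^{-1}\,ds\sim(\log T)^2$ to the condition $\ve\log T\lesssim 1$ — is the right heuristic for why the Neumann case loses a power of $\ve$ compared with the Dirichlet one. However, Step~1 contains a concrete error: you claim the Neumann condition kills the boundary terms produced by $\pa_t$ \emph{and} the translations. Only $\pa_t$ commutes with restriction to $\pa\Omega$; if $\pa_\nu u=0$ on $\pa\Omega$ it does \emph{not} follow that $\pa_\nu(\pa_{x_i}u)=0$, because $\pa_{x_i}$ is not tangent to the boundary. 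The paper explicitly handles this by proving the $L^2$ bounds for $\pa_x^\alpha u$ through elliptic regularity (Lemma~\ref{elliptic2}, Section~4.2--4.3) rather than through the naive energy identity, and then controls the boundary integral for general $\Gamma^\delta$ via the trace theorem together with those elliptic-regularity bounds (Section~5). Without this step your energy inequality in Step~1 is simply not available.

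Your proposal to handle the non-tangential vector fields by splitting $u=\chi u+(1-\chi)u$ and absorbing the commutators via local energy decay is a genuinely different tactic from the paper's, which confines the cut-off decomposition to the \emph{linear} solution operator (Lemma~\ref{lemma.decomposition}) when proving the decay estimates of Theorem~\ref{main}, and treats the nonlinear energy estimates directly on $\Omega$ with the trace-plus-elliptic machinery. The cut-off-in-the-energy route could in principle work, but it would require you to track carefully the inhomogeneous commutator $[\pa_t^2-\Delta,\chi]u$, which is supported near the obstacle and has one fewer derivative than $u$, and to show that the resulting local energy decay gain (Lemma~\ref{LocalEnergyDecay}) compensates; this is roughly as much work as the paper's elliptic-regularity route, and it does not sidestep the need to handle the translations as well. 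You should also be aware that the paper needs not just one logarithmic factor but the nested losses captured by $\mathcal R(\tau)=C_2(\tau+\tau^2+\tau^4)$ and the several tiers of the master norm ($K-2$, $K-8$, $K-14$, $K-20$), so the single-$w(t)$ master norm you propose will almost certainly have to be refined before the bootstrap actually closes.
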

\begin{rem}
\normalfont
The only point where we require that the obstacle ${\mathcal O}$ is convex
is to gain the local energy decay (see Lemma~\ref{LocalEnergyDecay} below). In general one
 can  treat the obstacles for which Lemma~\ref{LocalEnergyDecay} holds.
Unfortunately, for the Neumann problems in $2$D, up to our knowledge it is not known if there exists non-convex obstacles satisfying such
a local energy decay.
\end{rem}

\begin{rem}
\normalfont
One can ask if it is possible to gain a global existence result maintaining our assumption on the growth of $G$.
In general the answer to this question is negative since
the blow-up in finite time occurs for $F=(\pa_t u)^3$ when $n=2$. Indeed, it was proved in \cite{God93} that for any $R>0$ we can
find initial data such that the blow-up for the corresponding Cauchy problem occurs in the region $|x|>t+R$.
This result shows the blow-up for the exterior problem with any
boundary condition if we choose sufficiently large $R$, because the solution in $|x|>t+R$ is not affected
by the obstacle and the boundary condition, thanks to the finite propagation property (see \cite{KaKu12} for the corresponding discussion
in $3$D).

In order to look for global solutions one could investigate the exterior problem with suitable nonlinearity satisfying the so-called \it null condition. \rm
\end{rem}

%%%%%%%%%%%%%%
\begin{rem}\label{Rem1.4}
\normalfont
If we consider the Cauchy problem in $\R^2$,
or the Dirichlet problem in a domain exterior to a star-shaped obstacle in $2$D,
an analogous result to Theorem~$\ref{thm.mainsemi}$ holds with
\begin{equation}
\label{sharplife}
T_\ve \ge \exp (C\varepsilon^{-2}),
\end{equation}
and this lifespan estimate is known to be sharp (see \cite{God93} for the Cauchy problem and \cite{KP} for the Dirichlet problem).
One loss of the logarithmic factor in the decay estimates causes
this difference between the lifespan estimates \eqref{eq.lifespanT1.semi} and
\eqref{sharplife} (see Theorem~\ref{main} and Remark~\ref{Rem83}
below).
It is an interesting problem whether our lower bound \eqref{eq.lifespanT1.semi}
is sharp or not for the Neumann problem.
\end{rem}

\section{Preliminaries}
In this section we introduce some notation which will be used throughout this paper
and some basic lemmas for the proof of Theorem \ref{thm.mainsemi}.

Throughout the paper we shall assume $0\in {\mathcal O}$ so that we have
$|x|\ge c_0$ for $x\in \Omega$ for some positive constant $c_0$.
We shall also assume that
$\overline{\mathcal O}\subset B_1$,
where
$B_r$ stands for an open ball
with radius $r$ centered at the origin of ${\R}^2$.
Thus a function $v=v(x)$ on $\Omega$ vanishing for $|x|\le 1$ can be naturally regarded
as a function on $\R^2$.

%%%%%%%%%%%%%%%%%%%%%%%%%%%%%%%%%%%%%%%%%%%
\subsection{Notation}
%%%%%%%%%%%%%%%%%%%%%%%%%%%%%%%%%%%%%%%%%%%
Let us start with some standard notation.
\begin{itemize}
\item We put $\langle y\rangle :=\sqrt{1+|y|^2}$ for $y\in \R^d$ with $d\in \N$.
\item Let $A=A(y)$ and $B=B(y)$
be two positive functions of some variable $y$, such as $y=(t,x)$ or $y=x$, on suitable domains.
We write $A\lesssim B$ if there exists a positive constant $C$ such that
$A(y)\le C B(y)$ for all $y$ in the intersection of the domains of $A$ and $B$.
\item The $L^2(\Omega)$ norm is denoted by $\|\cdot\|_{L^2_{\Omega}}$, while the norm $\| \cdot \|_{L^2}$ without any other index stands for $\|\cdot \|_{L^2(\R^2)}$.
Similar notation will be used for the $L^\infty$ norms.
\item For a time-space depending function $u$ satisfying
$u(t,\cdot)\in X$ for $0\le t< T$ with
a Banach space $X$,
we put $\|u\|_{L^\infty_TX}:=\sup_{0\le t< T}\|u(t,\cdot)\|_X$.
For the brevity of the description, we sometimes use the expression
$\|h(s,y)\|_{L^\infty_tL^\infty_\Omega}$ with dummy variables $(s,y)$
for a function $h$ on $[0,t)\times \Omega$,
which means
$\sup_{0\le s<t}\|h(s, \cdot)\|_{L^\infty_\Omega}$.
\item
For $m\in \N$ and $s\in \R$, by $H^{m,s}(\Omega)$ we denote the weighted Sobolev space with norm defined by \eqref{eq.datanorm}.
Moreover $H^m(\Omega)$ and $H^m(\R^2)$ are the
standard Sobolev spaces.
\item We denote by $\mathcal C_0^\infty(\overline\Omega)$ the set of smooth functions defined on $\overline{\Omega}$ which vanish outside $B_R$ for some $R>1$.
\end{itemize}

Let $\nu\in \R$. We put
\begin{equation*}
w_\nu(t,x)=\langle x \rangle^{-1/2} \langle t-|x|\rangle^{-\nu}+\langle t+|x|\rangle^{-1/2}\langle t-|x|\rangle^{-1/2}.
\end{equation*}
This weight function $w_\nu$ will be used repeatedly in
the \it a priori estimates of the solution $u$ to \eqref{eq.PMN}.
We shall often use the following inequality
\begin{equation}\label{eq.ome}
w_\nu(t,x)\lesssim \langle t+|x|\rangle^{-1/2}(\min\{\langle x\rangle, \langle t-|x| \rangle\})^{-1/2}, \quad \nu \ge 1/2.
\end{equation}

For $\nu$, $\kappa>0$ we put
$$
W_{\nu,\kappa}(t,x)=
\langle t+|x|\rangle^\nu \left (\min\{\langle x\rangle, \langle t-|x|\rangle\}\right)^\kappa.
$$
%}}

Finally, for $a\ge 1$ we set
$$
\Omega_a=\Omega \cap B_a.
$$
Since $\overline{\mathcal O}\subset B_1$, we see that $\Omega_a\not= \emptyset $ for any $a\ge 1$.
%%%%%%%%%%%%%%
\subsection{Vector fields associated with the wave operator}
We introduce the vector fields\,:
$$
\Gamma_0:=\partial_0=\partial_t, \quad \Gamma_1:=\partial_1=\partial_{x_1},\quad \Gamma_2:=\partial_2=\partial_{x_2}, \quad
\Gamma_3:=\Lambda: =x_1 \partial_2-x_2\partial_1.
$$
Denoting $[A,B]:=AB-BA$, we have
\begin{equation}\label{eq.commute}
[\Gamma_i, \partial_{t}^2-\Delta]=0, \quad i=0,\dots,3,
\end{equation}
and also
$$
\begin{array}{ll}
[\Gamma_i,\Gamma_j]=0, & i,j=0,1,2,\cr
[\Gamma_0,\Gamma_3]=0, &\ \cr
[\Gamma_1,\Gamma_3]=\Gamma_2, &\cr
[\Gamma_2,\Gamma_3]=-\Gamma_1.
\end{array}
$$
Hence, for $i,j=0,1,2,3$,
we have
$[\Gamma_i, \Gamma_j]=\sum_{k=0}^3 c_{ij}^k\,\Gamma_k$
with suitable constants $c_{ij}^k$.
Moreover, for $i=0,1,2$ and $j=0,1,2,3$ we also have
$[\partial_i, \Gamma_j]=\sum_{k=1}^2 d_{ij}^k \partial_k$
with suitable constants $d_{ij}^k$.

We put $\partial=(\partial_0,\partial_1, \partial_2)$, $\partial_x=(\partial_1, \partial_2)$,
$\Gamma=(\Gamma_0, \Gamma_1, \Gamma_2, \Gamma_3)=(\partial, \Lambda)$
and $\widetilde{\Gamma}=(\Gamma_1, \Gamma_2, \Gamma_3)=(\partial_x, \Lambda)=
%{\blue{
(\nabla_x, \Lambda)
%}}
$.
The standard multi-index notation will be used for these sets of vector fields,
such as $\pa^\alpha=\pa_0^{\alpha_0}\pa_1^{\alpha_1}\pa_2^{\alpha_2}$
with $\alpha=(\alpha_0,\alpha_1,\alpha_2)$ and $\Gamma^\gamma=\Gamma_0^{\gamma_0} \cdots \Gamma_3^{\gamma_3}$ with $\gamma=(\gamma_0, \dots, \gamma_3)$.

For $\rho\ge 0$, $k\in \N$ and
functions $v_0=v_0(x)$ and $v_1=v_1(x)$, we put
\begin{eqnarray*}
\mathcal A_{\rho, k}[v_0,v_1]:= \sum_{|\gamma|\le k}\big(
\|\langle \cdot \rangle^\rho \widetilde{\Gamma}^\gamma v_0\|_{L^\infty_{_{\Omega}}}
+\|\langle \cdot \rangle^\rho\widetilde{\Gamma}^\gamma \nabla_x v_0\|_{L^\infty_{_{\Omega}}}
+\|\langle \cdot \rangle^\rho\widetilde{\Gamma}^\gamma v_1\|_{L^\infty_{_{\Omega}}} \big);\\
\mathcal B_{\rho, k}[v_0,v_1]:= \sum_{|\gamma|\le k}\big(
\|\langle \cdot \rangle^\rho \widetilde{\Gamma}^\gamma v_0\|_{L^\infty}
+\|\langle \cdot \rangle^\rho\widetilde{\Gamma}^\gamma \nabla_x v_0\|_{L^\infty}
+\|\langle \cdot \rangle^\rho\widetilde{\Gamma}^\gamma v_1\|_{L^\infty} \big).
\end{eqnarray*}
These quantities will be used to control the influence of the initial data to the $L^\infty$ norms of the solution.
% Finally, in the sequel $\partial u$ without any index denotes one of the derivatives $\partial_t u$ or $\partial_{x_1} u$ or $\partial_{x_2}u$.

Using the vector fields in $\widetilde{\Gamma}$, we obtain the following
Sobolev-type inequality.
\begin{lemma}\label{KlainermanSobolev}\
Let $v \in C_0^2(\overline{\Omega})$.
Then we have
\begin{eqnarray*}
\sup_{x \in \Omega} |x|^{1/2}|v(x)|
\lesssim
\sum_{\substack{|\alpha|+\beta\le 2 \\ \beta \ne 2}}
\|\pa_x^\alpha \Lambda^\beta v\|_{L^2(\Omega)}.
\end{eqnarray*}
\end{lemma}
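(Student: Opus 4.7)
The claim is the exterior-domain version of the standard 2D Klainerman–Sobolev estimate, and I would prove it by combining a radial 1D Sobolev inequality with a 1D Sobolev inequality on the circle $S^{1}$, after a reduction to the region $|x|\ge 2$.

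\smallskip
\emph{Reduction.} Since $\overline{\mathcal O}\subset B_1$ and $\partial\Omega$ is smooth, $\Omega_3$ is a bounded domain with smooth boundary. The standard Sobolev embedding $H^{2}(\Omega_3)\hookrightarrow L^\infty(\Omega_3)$ gives
\[
\sup_{x\in\Omega,\,|x|\le 2}|x|^{1/2}|v(x)|\lesssim \|v\|_{L^\infty(\Omega_3)}\lesssim\sum_{|\alpha|\le 2}\|\pa_x^\alpha v\|_{L^2(\Omega)},
\]
which is controlled by the right-hand side with $\beta=0$. It remains to handle the region $|x|\ge 2$.

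\smallskip
\emph{Radial estimate.} Fix $x=R\omega$ with $R\ge 2$ and $\omega\in S^{1}$. Because $\mathcal O\subset B_1$, the ray $\{r\omega:r\ge R\}$ lies in $\Omega$, and $v$ vanishes there for $r$ large. Applying the fundamental theorem of calculus to $r\mapsto r|v(r\omega)|^2$ and then Cauchy–Schwarz and $r\ge 1$,
\[
R|v(R\omega)|^2\le \int_R^\infty|v|^2\,dr+2\int_R^\infty r|v||\pa_r v|\,dr\lesssim \int_R^\infty r\bigl(|v|^2+|\pa_r v|^2\bigr)\,dr.
\]
Integrating in $\omega$ and recognizing the polar volume element $r\,dr\,d\omega$,
\[
R\int_{S^{1}}|v(R\omega)|^2\,d\omega\lesssim \|v\|_{L^2(\Omega)}^2+\|\nabla_x v\|_{L^2(\Omega)}^2.
\]

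\smallskip
\emph{Angular estimate.} The 1D Sobolev embedding on $S^{1}$ gives $\|w\|_{L^\infty(S^{1})}^2\lesssim \|w\|_{L^2(S^{1})}^2+\|\pa_\theta w\|_{L^2(S^{1})}^2$. Applied with $w(\omega)=v(R\omega)$, and using that $\Lambda=\pa_\theta$ in polar coordinates, this yields
\[
R\sup_{\omega\in S^{1}}|v(R\omega)|^2\lesssim R\int_{S^{1}}|v(R\omega)|^2\,d\omega+R\int_{S^{1}}|\Lambda v(R\omega)|^2\,d\omega.
\]
The first term was just bounded. For the second, since $\Lambda v\in \mathcal C_0^{1}(\overline\Omega)$, I apply the radial estimate above to $\Lambda v$, and use $|\pa_r\,\cdot\,|\le|\nabla_x\,\cdot\,|$, to obtain
\[
R\int_{S^{1}}|\Lambda v(R\omega)|^2\,d\omega\lesssim \|\Lambda v\|_{L^2(\Omega)}^2+\sum_{|\alpha|=1}\|\pa_x^\alpha\Lambda v\|_{L^2(\Omega)}^2.
\]
Combining the three displays bounds $R|v(R\omega)|^2$ by a sum of $\|\pa_x^\alpha\Lambda^\beta v\|_{L^2(\Omega)}^2$ with $|\alpha|+\beta\le 2$ and $\beta\le 1$, which is precisely the desired right-hand side squared.

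\smallskip
\emph{Main obstacle.} The only delicate point is geometric: an outward ray from $x\in\Omega$ is not guaranteed to stay in $\Omega$ when $|x|$ is small, since $\mathcal O$ need not be star-shaped. This is what forces the split into two regimes; once $R\ge 2$ the ray lies in $\R^2\setminus B_1\subset\Omega$ and the radial argument proceeds unobstructed, while for $|x|\le 2$ the weight $|x|^{1/2}$ is harmless and a plain Sobolev embedding on $\Omega_3$ (which is smooth) closes the estimate.
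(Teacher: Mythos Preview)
Your proof is correct. Both arguments share the same reduction: split into the region $|x|\le 2$, handled by a plain Sobolev embedding on the bounded domain, and the region $|x|\ge 2$, where the weight $|x|^{1/2}$ is the issue. The difference lies in the far region. The paper introduces a radial cutoff $\chi$ supported in $|x|\ge 1$, applies the known whole-space Klainerman inequality \eqref{KlainermanIneq} (quoted from \cite{kl0}) to $\chi v\in C_0^2(\R^2)$, and then absorbs the commutators with $\chi$ into lower-order terms on $\Omega$. You instead reprove the needed inequality from scratch via the radial fundamental-theorem-of-calculus estimate and the one-dimensional Sobolev inequality on $S^1$, applied first to $v$ and then to $\Lambda v$. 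Your route is more self-contained and makes transparent why only $\beta\le 1$ is needed; the paper's route is shorter once one accepts the $\R^2$ result as a black box. Functionally the two are equivalent, since your radial--angular argument is precisely the standard proof of \eqref{KlainermanIneq}.
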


\noindent{\it Proof.}\ It is well known that for $w \in C_0^2(\R^2)$ we have
\begin{equation}
|x|^{1/2}|w(x)| \lesssim
 \sum_{\substack{|\alpha|+\beta \le 2\\ \beta\not=2}}\|\pa_x^\alpha \Lambda^\beta w\|_{L^2(\R^2)}, \quad  x\in \R^2
\label{KlainermanIneq}
\end{equation}
(see Klainerman \cite{kl0} for the proof).

Let $\chi=\chi(x)$ be a nonnegative smooth function satisfying
$\chi(x)\equiv 0$ for $|x|\le 1$ and $\chi(x)\equiv 1$ for $|x|\ge 2$. If we rewrite $v$ as
$v=\chi v+(1-\chi) v$, then
we have $\chi v\in C^\infty_0(\R^2)$ and \eqref{KlainermanIneq} leads to
$$
\sup_{x \in \Omega} |x|^{1/2}|v(x)|\lesssim
\sum_{\substack{|\alpha|+\beta \le 2\\ \beta\ne 2}} \|\pa_x^\alpha \Lambda^\beta (\chi v)\|_{L^2(\R^2)}+\|(1-\chi)v\|_{L^\infty(\Omega)}.
$$
By using the Sobolev embedding to estimate the last term, we arrive at
$$
\sup_{x \in \Omega} |x|^{1/2}|v(x)|\lesssim
\sum_{\substack{|\alpha|+\beta \le 2\\ \beta\not=2}} \|\pa_x^\alpha \Lambda^\beta v\|_{L^2(\Omega)}
+\sum_{|\alpha| \le 2} \|\pa_x^\alpha v\|_{L^2(\Omega)}.
$$
This completes the proof.
\hfill$\qed$
%%%%%%%%%%%%%%%%%%%%%%%%%%%%%%%%%%%%%%%%%%%%%%%%%%%%%%%%
\subsection{Elliptic estimates}
The following elliptic estimates will be used in the energy estimates.

\begin{lemma}\label{elliptic2}\
Let $R>1$, $m$ be an integer with $m \ge 2$ and $v \in H^m(\Omega)$ such that $\partial_\nu v=0$ on $\partial \Omega$.
Then we have
\begin{eqnarray}\label{eq.ap1}
 \|\pa^\alpha_x v\|_{L^2(\Omega)} \lesssim \|\Delta v\|_{H^{|\alpha|-2}(\Omega)}+\|v\|_{H^{|\alpha|-1}(\Omega_{R+1})}
\end{eqnarray}
for $2\le |\alpha| \le m$.
\end{lemma}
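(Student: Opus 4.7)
The plan is to localize the estimate into two pieces via a cutoff near the obstacle, handle the exterior piece by Fourier analysis on $\R^2$, and handle the interior piece by classical Neumann elliptic regularity on a bounded smooth domain.

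More concretely, fix a smooth cutoff $\chi=\chi(x)$ with $\chi\equiv 0$ on $|x|\le R$ and $\chi\equiv 1$ on $|x|\ge R+1$, and decompose $v = \chi v + (1-\chi) v$ on $\overline{\Omega}$. The first piece $\chi v$ vanishes in a neighborhood of $\pa\Omega$, so it extends by zero to a function in $H^m(\R^2)$. For any $|\alpha|\ge 2$, Plancherel's theorem together with the elementary multiplier inequality $|\xi^\alpha|^2 \le (1+|\xi|^2)^{|\alpha|-2}|\xi|^4$ gives
\begin{equation*}
\|\pa_x^\alpha (\chi v)\|_{L^2(\R^2)} \lesssim \|\Delta(\chi v)\|_{H^{|\alpha|-2}(\R^2)}.
\end{equation*}
Expanding $\Delta(\chi v) = \chi \Delta v + 2\nabla\chi\cdot \nabla v + (\Delta \chi) v$ and noting that $\nabla \chi$ and $\Delta \chi$ are supported in the annulus $\{R\le |x|\le R+1\}\subset \Omega_{R+1}$, the last two terms are bounded (in $H^{|\alpha|-2}$) by $\|v\|_{H^{|\alpha|-1}(\Omega_{R+1})}$, which yields the required control of $\chi v$ by the right-hand side of \eqref{eq.ap1}.

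For the second piece $(1-\chi)v$, observe that it is compactly supported in the bounded smooth domain $\Omega_{R+1}$. Moreover, since $\chi$ and its derivatives vanish near $\pa\Omega$, we have $\pa_\nu[(1-\chi)v]=(1-\chi)\pa_\nu v-(\pa_\nu\chi) v = 0$ on $\pa\Omega$, so $(1-\chi)v$ satisfies the homogeneous Neumann boundary condition on $\pa\Omega$. The classical elliptic regularity for the Neumann Laplacian on the bounded smooth domain $\Omega_{R+1}$ (with trivial Dirichlet-type condition on $\pa B_{R+1}$ coming from the cutoff) gives
\begin{equation*}
\|\pa_x^\alpha[(1-\chi) v]\|_{L^2(\Omega_{R+1})} \lesssim \|\Delta[(1-\chi)v]\|_{H^{|\alpha|-2}(\Omega_{R+1})} + \|(1-\chi) v\|_{H^{|\alpha|-1}(\Omega_{R+1})},
\end{equation*}
and expanding the Laplacian again lets us absorb the commutator terms into $\|v\|_{H^{|\alpha|-1}(\Omega_{R+1})}$ while bounding the main term by $\|\Delta v\|_{H^{|\alpha|-2}(\Omega)}$.

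Adding the two pieces and using the triangle inequality finishes the proof. The only nontrivial ingredient I would want to quote is the boundary $H^m$ regularity for the Neumann problem on a bounded $C^\infty$ domain — this is the main obstacle in the sense that it is the one external black box — but it is entirely standard (via flattening the boundary and difference quotients) and does not require any convexity of $\mathcal O$; smoothness of $\pa\Omega$ suffices. Everything else (the Fourier multiplier bound and the commutator bookkeeping) is routine.
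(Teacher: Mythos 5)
Your proof is essentially the same as the paper's: both decompose $v$ with a cutoff supported near the obstacle (you just label $\chi$ and $1-\chi$ oppositely), estimate the far piece on $\R^2$ via the Fourier multiplier bound $\|\pa_x^\alpha w\|_{L^2}\lesssim\|\Delta w\|_{H^{|\alpha|-2}}$, and estimate the near piece on the bounded domain $\Omega_{R+1}$ by classical elliptic regularity with mixed Neumann/Dirichlet boundary conditions (the paper cites Agmon--Douglis--Nirenberg, Theorem~15.2), with the same commutator bookkeeping in the annulus. No gaps.
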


\begin{proof}
Let $\chi$ be a $C^\infty_0({\R}^n)$ function such that
$\chi(x) \equiv 1$ for $|x|\le R$ and $\chi (x)\equiv 0$ for $|x|\ge R+1$.
We set $v_1=\chi v$ and $v_2=(1-\chi) v$, so that $v=v_1+v_2$.

If we put $h=\Delta v_1$, the function $v_1$ solves the elliptic problem
\begin{equation*}
\left\{
\begin{array}{ll}
\Delta v_1= h & \text{ on } \Omega_{R+1}, \\
\partial_{\nu} v_1=0 & \text{ on } \partial \Omega, \\
v_1=0 & \text{ on } \partial B_{R+1}.
\end{array}
\right.
\end{equation*}
From Theorem 15.2 of \cite{ADN}, we have
\begin{equation}\label{eq.ADN}
\|v_1\|_{H^{l}(\Omega_{R+1})}\lesssim
\|h\|_{H^{l-2}(\Omega_{R+1})}+\|v_1\|_{L^{2}(\Omega_{R+1})}=
\|\Delta v_1
\|_{H^{l-2}(\Omega_{R+1})}+\|v_1\|_{L^{2}(\Omega_{R+1})}
\end{equation}
for $l\ge 2$.
Hence
\begin{eqnarray*}
\|\pa^\alpha_x v_1\|_{L^2(\Omega)}
& \lesssim & \|\Delta v\|_{H^{|\alpha|-2}(\Omega_{R+1})}
 +\|\nabla v\|_{H^{|\alpha|-2}(\Omega_{R+1})}+\|v\|_{H^{|\alpha|-2}(\Omega_{R+1})}\\
& \lesssim & \|\Delta v\|_{H^{|\alpha|-2}(\Omega_{R+1})}
 +\|v\|_{H^{|\alpha|-1}(\Omega_{R+1})}
\end{eqnarray*}

Now we consider $v_2$. Note that $v_2$ can be regarded as a function in $\R^2$ and we can write $\|\pa^\alpha_x v_2\|_{L^2(\Omega)}=\| \pa^\alpha_x v_2\|_{L^2(\R^2)}$.
Let us recall that
$\|\pa^\beta_x w\|_{L^2({\R}^n)}\lesssim \|\Delta w\|_{L^2({\R}^n)}$
for any $w \in H^2({\R}^n)$ and $|\beta|=2$.
Writing  $\alpha=\beta+\gamma$ with $|\beta|=2$ and $|\gamma|=|\alpha|-2$, we have
\begin{eqnarray*}\nonumber
 \|\pa^\alpha_x v_2\|_{L^2(\Omega)}&\lesssim &  \|\Delta \pa^\gamma_x v_2\|_{L^2(\R^2)}
\lesssim \|\Delta v_2\|_{ H^{|\alpha|-2}(\R^2)}\\
&\lesssim &\|\Delta v\|_{ H^{|\alpha|-2}(\Omega)}+\|v\|_{ H^{|\alpha|-1}(\Omega_{R+1})}.
\end{eqnarray*}
Combining this inequality with the estimate for $v_1$, we find \eqref{eq.ap1}.
\end{proof}

%%%%%%%%%%%%%%%%%%%%%%%%%%%%%%%%%%%%%%%%%%%%%%%%%%%%%%%%
\subsection{Decay estimates for the linear wave equation with Neumann boundary condition}\label{LinearNeumann}\rm
Given $T>0$, we consider the mixed problem
\begin{equation}\label{eq.PMfT}
\begin{array}{ll}
(\partial_t^2-\Delta) u =f, & (t,x) \in(0,T)\times \Omega,\\
\partial_\nu u(t,x)=0, & (t,x) \in(0,T)\times \partial\Omega,\\
u(0,x)=u_0(x), & x\in \Omega,\\
(\partial_t u)(0,x)=u_1(x), & x\in \Omega.
\end{array}
\end{equation}
It is known that for $u_0\in H^2(\Omega)$, $u_1\in H^1(\Omega)$ and
$f\in {\mathcal C}^1\bigl([0,T); L^2(\Omega)\bigr)$, the mixed problem \eqref{eq.PMfT}
admits a unique solution
$$
u\in \bigcap_{j=0}^2 {\mathcal C}^j\bigl([0,T); H^{2-j}(\Omega)\bigr),
$$
provided that $(u_0, u_1, f)$ satisfies the compatibility condition of order $0$, that is to say,
\begin{equation}
 \label{CCo0}
 \pa_\nu u_0(x)=0,\quad x\in \pa \Omega
\end{equation}
(see \cite{I68} for instance).
Under these assumptions for $\vec{u}_0:=(u_0,u_1)$,
the solution $u$ of \eqref{eq.PMfT} will be denoted by $S[\vec{u}_0,f](t,x)$.
We set $K[\vec{u}_0](t,x)$ for the solution of \eqref{eq.PMfT} with $f\equiv 0$ and
$L[f](t,x)$ for the solution of \eqref{eq.PMfT} with $\vec{u}_0\equiv (0,0)$; in other words we put
$$
K[\vec{u}_0](t,x):=S[\vec{u}_0, 0](t,x), \quad L[f](t,x):= S[(0,0),f](t,x)
$$
so that we get
$$
S[\vec{u}_0,f](t,x)=K[\vec{u}_0](t,x)+L[f](t,x),
$$
where $K[\vec{u_0}]$ and $L[f]$ are well defined because both of $(u_0, u_1, 0)$ and $(0,0,f)$ satisfy the compatibility condition of order $0$.
%\footnote{I remove Remak 2.1}
%\begin{rem}
%\normalfont
%It is known that the set
%$$
%D_N:=\{\vec{u}_0=(u_0, u_1)\in H^2(\Omega)\times H^1(\Omega);\, \pa_\nu u_0(x)=0,\ x\in \pa \Omega\}
%$$
%is dense in $H^1(\Omega)\times L^2(\Omega)$ (see \cite{I68}).
%
%\noindent
%Hence, using the energy identity,
%we can define $K[\vec{u}_0]\in \bigcap_{j=0}^1{\mathcal C}^j\bigl([0,T]; H^{1-j}(\Omega)\bigr)$
%for $\vec{u}_0\in H^1(\Omega)\times L^2(\Omega)$ by $K[\vec{u}_0]:=\lim_{n\to\infty}K[\vec{u}_{0,n}]$,
%where $\{\vec{u}_{0,n}\}$ is a sequence in $D_N$ approximating $\vec{u}_0$.
%\end{rem}
In order to obtain a smooth solution to \eqref{eq.PMfT}, we need the compatibility condition of infinite order.
\begin{definition}\label{def.complinear}
Suppose that $u_0$, $u_1$ and $f$ are smooth. Define $u_j$ for $j\ge 2$ inductively by
$$
u_j(x)=\Delta u_{j-2}(x)+(\partial_t^{j-2}f)(0,x),\quad j\ge 2.
$$
We say that $(u_0, u_1, f)$ satisfies the compatibility
condition of infinite order in
$\Omega$ for \eqref{eq.PMfT}, if one has
\begin{equation*}
\partial_\nu u_j=0 \quad\text{on}\quad \partial\Omega
\end{equation*}
for any nonnegative integer $j$.
\end{definition}
We say that $(u_0, u_1, f)\in X(T)$ if the following three conditions are
satisfied:
\begin{itemize}
\item $(u_0,u_1)\in \mathcal C_0^{\infty}(\overline{\Omega})
       \times \mathcal C_0^{\infty}(\overline{\Omega})$,
\item  $f\in C^{\infty}([0,T) \times \overline{\Omega})$;
      moreover,
 $f(t,\cdot)\in  \mathcal C_0^{\infty}(\overline{\Omega})$ for any $t\in [0, T)$,
\item $(u_0, u_1, f)$ satisfies the compatibility condition of infinite order.
\end{itemize}
It is known that if $(u_0, u_1, f)\in X(T)$, then we have $S[\vec{u}_0,f]\in {\mathcal C}^\infty\bigl([0,T)\times \overline{\Omega}\bigr)$ (see \cite{I68} for instance).

The following decay estimates play important roles in our proof of the main theorem.
%%%%%%%%%%%%%%%%%%%%%%%%%%%%%%%%%%%%%%%%%%%%%
\begin{theorem}\ \label{main}
Let ${\mathcal O}$ be a convex set and $k$ be a nonnegative integer.
Suppose that $\Xi=(\vec{u}_0,f)=({u}_0, {u}_1,f) \in X(T)$.

\noindent
{\rm (i)}\ Let $\mu>0$.
Then  we have
\begin{eqnarray}\label{ba3}
%&& \log^{-1}\left(\eu+\frac{\langle t+|x|\rangle }{\langle t-|x|\rangle }\right) |S[\Xi](t,x)|\lesssim \\
\sum_{|\delta|\le k} |\Gamma^\delta S[\Xi](t,x)|
\lesssim {\mathcal A}_{2+\mu,3+k} [\vec{u}_0]
+ \log(e+t)\sum_{|\delta|\le 3+k}\| |y|^{1/2}W_{1,1+\mu}(s,y)\Gamma^\delta  f(s,y)\|_{L^\infty_{t}L^\infty_\Omega} %onumber
\end{eqnarray}
for $(t,x)\in [0,T) \times {\overline{\Omega}}$.

\vspace{1mm}

\noindent
{\rm (ii)}\ Let $0<\eta<1/2$ and $\mu>0$. Then we have
\begin{eqnarray}
&&   w_{(1/2)-\eta}^{-1}(t,x) \sum_{|\delta|\le k} |\Gamma^\delta \partial S[\Xi](t,x)|\lesssim \nonumber\\
&& \quad \lesssim \mathcal A_{2+\mu,k+4}[\vec{u}_0]+
 \log^2(e+t+|x|)
 \sum_{|\delta|\le k+4}\||y|^{1/2}W_{1,1}(s,y)\Gamma^\delta f(s,y)\|_{L^\infty_{t}L^\infty_\Omega}, \label{ba4weak}
\\
&&   w_{1/2}^{-1}(t,x) \sum_{|\delta|\le k} |\Gamma^\delta \partial S[\Xi](t,x)|\lesssim
\nonumber\\
&& \quad \lesssim \mathcal A_{2+\mu,k+4}[\vec{u}_0]+
  \log^2(e+t+|x|)
 \sum_{|\delta|\le k+4}\||y|^{1/2}W_{1,1+\mu}(s,y)\Gamma^\delta f(s,y)\|_{L^\infty_{t}L^\infty_\Omega}
 \label{ba4}
\end{eqnarray}
for $(t,x)\in [0,T)\times {\overline{\Omega}}$.

\vspace{1mm}

\noindent
{\rm (iii)}\ Let $0<\eta<1$ and $\mu>0$. Then we have
\begin{eqnarray}
&& w_{1-\eta}^{-1}(t,x)
   \sum_{|\delta|\le k} |\Gamma^\delta \pa\partial_t S[\Xi](t,x)|\lesssim
\nonumber\\
&& \quad \lesssim  {\mathcal A}_{2+\mu,k+ 5}[\vec{u}_0]+
\log^{2} (e+t+|x|)\sum_{|\delta|\le k+ 5}\||y|^{1/2}W_{1,1}(s,y) \Gamma^\delta
f(s,y)\|_{L^\infty_{t}L^\infty_\Omega} \label{ba4t}
\end{eqnarray}
for $(t,x)\in [0,T)\times {\overline{\Omega}}$.
\end{theorem}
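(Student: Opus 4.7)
The plan is to prove the three estimates by a single strategy: decompose $u:=S[\Xi]$ into a piece supported near the obstacle, controlled via the local energy decay (Lemma~\ref{LocalEnergyDecay} referenced below), and a piece extended by zero to all of $\R^2$, controlled via classical pointwise decay estimates for the 2D Cauchy problem. The three parts differ only in how many derivatives act on the solution and in how spatial derivatives near the boundary are traded for time derivatives.

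First I would fix $\chi\in\mathcal C_0^\infty(\R^2)$ with $\chi\equiv 1$ on a neighbourhood of $\overline{\mathcal O}$ and $\chi\equiv 0$ outside $B_3$, and write $u=\chi u+(1-\chi)u=:u^{\rm in}+u^{\rm out}$. The function $u^{\rm out}$, extended by zero inside $\mathcal O$, satisfies a Cauchy problem on $\R^2$ with source $(1-\chi)f-[\Delta,\chi]u$ and data $((1-\chi)u_0,(1-\chi)u_1)$; the commutator $[\Delta,\chi]u$ is a first-order operator in $u$ localized in the annulus $\{2\le|x|\le 3\}$ and is absorbed by the local energy of $u$ together with Sobolev embedding. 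For this planar Cauchy problem I would invoke the known weighted $L^\infty$--$L^\infty$ decay estimates: the data contribution is controlled by the $\mathcal A$-terms on the right-hand side through the weight $\langle x\rangle^{2+\mu}$, which is precisely what makes the planar fundamental-solution integrals converge; the source contribution, under the weight $W_{1,1+\mu}$ on $f$, is bounded by $\log(e+t)$ times the stated norm. Commutation with the full family $\Gamma$ on the outer region is legitimate by~\eqref{eq.commute} together with the relations $[\Gamma_i,\Gamma_j]=\sum_k c_{ij}^k\,\Gamma_k$. A $\partial$-differentiation then adds one factor of $w_{1/2}^{-1}$ together with an extra $\log$, giving~(ii); a further $\partial_t$ in~(iii) produces an additional $\langle t-|x|\rangle^{-1/2+\eta}$ factor on the wavefront, yielding the improved weight $w_{1-\eta}^{-1}$.

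For $u^{\rm in}$ the key input is the local energy decay, which in the convex 2D Neumann setting decays like $(\log(e+t))^{-1}$. Since $\partial_t^j u$ still satisfies the Neumann condition, the same rate propagates to all time derivatives; spatial derivatives are recovered by Lemma~\ref{elliptic2} applied iteratively to $\Delta u=\partial_t^2 u-f$, and the action of $\Lambda$ on the compactly supported $\chi u$ is harmless. Passage from $H^m$ to $L^\infty$ is by Sobolev embedding on $\Omega_3$. To convert this decay into a bound in terms of the weighted $L^\infty$ norms of $f$ appearing on the right hand side, I would partition $f$ dyadically in time, apply the local energy bound to each slab, and sum: the $\sim\log(e+t)$ nontrivial slabs, each contributing a factor $O((\log(e+t))^{-1})$, combine to produce exactly $\log(e+t)$ in~(i) and $\log^2(e+t+|x|)$ in~(ii)--(iii).

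The hard part is precisely this logarithmic bookkeeping. One must show that the composition of the slow local-energy decay with the extra $\log$ coming from the 2D free-space $L^\infty$ estimate on $\partial u$ gives \emph{precisely} $\log^2$ and not $\log^3$ in~(ii) and~(iii), and that the weight $|y|^{1/2}W_{1,1+\mu}$ on $f$ (with $\mu>0$, and replaced by $W_{1,1}$ in the $w_{1/2-\eta}$ and $w_{1-\eta}$ versions) is exactly what makes the planar integrals converge. A secondary technical point is that $\partial_{x_j}$ does not preserve the Neumann condition, so the iterative use of Lemma~\ref{elliptic2} combined with the infinite-order compatibility built into $\Xi\in X(T)$ is essential in order to recover all $\partial_x$ derivatives from $\partial_t$ derivatives inside the near region.
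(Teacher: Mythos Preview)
Your overall strategy---combine free-space decay on $\R^2$ with local energy decay near the obstacle via cut-offs---is the right one, and it is what the paper does. But two points in your execution are genuinely wrong, and they matter.

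First, the local energy decay rate you quote is incorrect. For the convex 2D Neumann problem the homogeneous local energy decays like $\jb{t}^{-1}$, not $(\log(e+t))^{-1}$; the logarithm in Lemma~\ref{LocalEnergyDecay} is a \emph{loss} that appears after Duhamel, i.e.\ one obtains $\jb{t}^{-\gamma}\log(e+t)$ on the left when the forcing carries a weight $\jb{s}^\gamma$. Your dyadic-in-time summation scheme---``$\sim\log(e+t)$ slabs each contributing $O((\log(e+t))^{-1})$''---is therefore built on a false premise and does not produce the stated powers of $\log$. The $\log$ in \eqref{ba3} and the $\log^2$ in \eqref{ba4weak}--\eqref{ba4t} actually arise from the interaction of this Duhamel loss with the borderline case $\kappa=1$ in the free-space estimates \eqref{eq.decayMG}, \eqref{eq.decayMG4}; no dyadic partition is used.

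Second, and more structurally, the naive splitting $u=\chi u+(1-\chi)u$ is circular. To run the Cauchy-problem estimates on $(1-\chi)u$ you must control the commutator $[\Delta,\chi]u$ in weighted $L^\infty$, which requires local pointwise bounds on $u$; but Lemma~\ref{LocalEnergyDecay} applies only when the forcing is \emph{uniformly} compactly supported in $x$, and the full $f\in X(T)$ need not be. The paper breaks this loop with a five-term decomposition (Lemma~\ref{lemma.decomposition}): one first solves the free problem $S_0[\psi_2\Xi]$ with cut-off data and force, then corrects near $\partial\Omega$ via $L$ applied to a forcing that is compactly supported by construction, and finally handles two further error terms $S_2,S_4$ that are again free-space solutions with localized sources. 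Each appearance of the mixed-problem propagator $S$ or $L$ in that decomposition is fed only compactly supported data, so Lemma~\ref{LocalEnergyDecay} applies legitimately. Your sketch does not indicate how you would achieve this, and without it the argument does not close.
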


We will prove Theorem~\ref{main} in Section~\ref{sec.pointwise} below,
by using the so-called cut-off method to combine the corresponding decay estimates for
the Cauchy problem with the local energy decay.
%%%%%%%%%%%%%%%%%%%%%%%%%%%%%%%%%%%%%%%%%%%%%%%%%%%%%%%%

\section{The abstract argument for the proof of the
main theorem}\label{AAPMT}
Since the local existence of smooth solutions for the mixed problem
\eqref{eq.PMN} has been shown by \cite{SN89} (see also the Appendix),
what we need to do for showing the large time existence of the solution
is to derive suitable {\it a priori} estimates: following
\cite{SN89}, we need the control of $\|u(t)\|_{H^9(\Omega)}+\|\pa_t u(t)\|_{H^8(\Omega)}$ for the solution $u$.

Let $u$ be the local solution of \eqref{eq.PMN},
assuming \eqref{eq.smalldata.semi} holds for large $m\in \N$ and $s>0$.
Let $T^*$ be the supremum of $T$ such that \eqref{eq.PMN} admits a (unique) classical solution in $[0,T)\times \overline{\Omega}$.
For $0<T\le T^*$, a small $\eta>0$,
and nonnegative integers $H$ and $K$
we define
\begin{align*}
 \en_{H,K}(T)\equiv &
\sum_{|\gamma|\le H-1} \|w_{1/2}^{-1} \Gamma^\gamma \pa u\|_{L^\infty_TL^\infty_\Omega}
+\sum_{1\le j+ |\alpha|\le K}\|\partial_t^j \pa_x^\alpha u\|_{L^\infty_T L^2_{\Omega}}
\\
&
+\sum_{|\delta|\leq K-2} \| \jb{s}^{-1/2} \Gamma^\delta \partial u(s,y)\|_{L^\infty_T L^2_{\Omega}}
+\sum_{|\delta|\leq K-8} \| \jb{s}^{-(1/4)-\eta} \Gamma^\delta \partial u(s,y)\|_{L^\infty_T L^2_{\Omega}}
\\
&
+\sum_{|\delta|\leq K-14} \| \jb{s}^{-2\eta} \Gamma^\delta \partial u(s,y)\|_{L^\infty_T L^2_{\Omega}}
+\sum_{|\delta|\leq K-20} \| \Gamma^\delta \partial u\|_{L^\infty_T L^2_{\Omega}}.
\end{align*}
We neglect the first sum when $H=0$.
Similarly we
neglect summations taken over the empty set as $K$ varies.
We also put $$\en_{H,K}(0)=\lim_{T\to 0^+}\en_{H,K}(T).$$ Observe that $\en_{H,K}(0)$ can be determined only by $\phi$, $\psi$ and $G$ and that we have
$$
\en_{H,K}(0)\lesssim \|\phi\|_{H^{m+1,s}(\Omega)} +\|\psi\|_{H^{m,s}(\Omega)}
$$
for suitably large $m \in\N$ and $s>0$ depending on $H$ and $K$.
From \eqref{eq.smalldata.semi} for such $m\in \N$ and $s>0$, we see that $\en_{H,K}(0)$ is finite.
The previous inequality can be obtained combining the embedding
$H^r(\Omega)\hookrightarrow L^\infty(\Omega)$ for $r>1$
with the trivial inequality
$|\Gamma_3 f|\le \langle x\rangle|\partial_1 f|+\langle x\rangle|\partial_2 f|$ and the equivalence between
$\sum_{ |\alpha|\le m}\|\langle \cdot
\rangle^s  \pa_x^\alpha f\|_{ L^2_\Omega}$
and $\|\langle \cdot
\rangle^s  f\|_{H^m(\Omega)}$.
In order to optimize $m$ or $s$ it is possible to use sharpest embedding theorem in weighted Sobolev spaces proved for example in \cite{GeLu04}.

Our goal is to show the following claim.
\begin{claim}\label{claim}
\normalfont
We can take suitable $H$ and $K$ and sufficiently large $m$ and $s$, so that there exist
positive numbers
$C_1$, $P$ and $Q$ and
a strictly increasing continuous function ${\mathcal R}:[0,\infty)\to[0,\infty)$ with ${\mathcal R}(0)=0$ such that
if $\en_{H,K}(T) \le 1$, then
\begin{equation}\label{eq.energy}
  \en_{H,K}(T)\le C_1\varepsilon + %C_0
{\mathcal R}\left(\en_{H,K}^{P}(T)\log^{Q}(e+T)\right) (\varepsilon+\en_{H,K}(T)),
\end{equation}
provided that \eqref{eq.smalldata.semi} holds with $\ve\le 1$.
Here $C_1$, $P$, $Q$ and ${\mathcal R}$ are independent of $\ve$ and $T$.
\end{claim}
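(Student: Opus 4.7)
The plan is to derive \eqref{eq.energy} by estimating each of the six terms comprising $\en_{H,K}(T)$ separately, combining Theorem~\ref{main} for the weighted pointwise piece with weighted and unweighted $L^2$ energy inequalities for the $L^2$ pieces. Throughout, I would commute $\Gamma^\gamma$ with the wave operator via \eqref{eq.commute}, so that $(\pa_t^2-\Delta)\Gamma^\gamma u=\Gamma^\gamma G(\pa u)$ on $(0,T)\times\Omega$. Since the rotation $\Lambda$ is in general not tangent to $\partial\Omega$ and the Neumann condition is not preserved under $\Lambda$, I would split $u=\chi u+(1-\chi)u$ with a fixed smooth cutoff $\chi$ equal to $1$ near $\overline{\mathcal O}$ and to $0$ outside $B_2$: the localized part $\chi u$ is controlled through the local energy decay (Lemma~\ref{LocalEnergyDecay}, granted by convexity) together with the elliptic estimate of Lemma~\ref{elliptic2}, while $(1-\chi)u$ is treated as a function on all of $\R^2$ where all vector fields in $\Gamma$ commute cleanly with $\pa_t^2-\Delta$.

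For the weighted $L^\infty$ sum, I would apply \eqref{ba4} to $\Gamma^\gamma u$ with source $f=\Gamma^\gamma G(\pa u)$. The crucial step is to dominate
$$
\sum_{|\delta|\le H+3}\bigl\|\,|y|^{1/2}W_{1,1+\mu}(s,y)\,\Gamma^\delta G(\pa u)(s,y)\bigr\|_{L^\infty_T L^\infty_\Omega}
$$
by a suitable power of $\en_{H,K}(T)$. Expanding $\Gamma^\delta G(\pa u)$ into cubic products of $\Gamma^{\delta_i}\pa u$ with $|\delta_1|+|\delta_2|+|\delta_3|\le|\delta|$, I would place the factor of highest order in $L^2$ (recovered pointwise through Lemma~\ref{KlainermanSobolev} applied to $\widetilde\Gamma^\sigma \Gamma^{\delta_i}\pa u$) and the other two in $L^\infty$ via the first sum of $\en_{H,K}(T)$. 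This produces a cube of the weight $w_{1/2}$, and a direct use of \eqref{eq.ome} yields $|y|^{1/2}W_{1,1+\mu}(s,y)\,w_{1/2}(s,y)^2\lesssim 1$ for $\mu$ small, which gives a cubic bound $\en_{H,K}(T)^2(\en_{H,K}(T)+\varepsilon)$ multiplied by the $\log^2(e+T)$ prefactor from \eqref{ba4}.

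For the $L^2$ pieces I would use three ingredients. First, the standard energy identity applied to $\Gamma^\gamma u$ with multiplier $\pa_t\Gamma^\gamma u$ yields $\frac{d}{dt}\|\pa\Gamma^\gamma u\|_{L^2_\Omega}^2\lesssim\|\pa\Gamma^\gamma u\|_{L^2_\Omega}\|\Gamma^\gamma G(\pa u)\|_{L^2_\Omega}$ modulo boundary commutators absorbed through the cutoff; together with Gr\"onwall and estimating the source by two $L^\infty$ factors (via the first sum of $\en_{H,K}(T)$) times one $L^2$ factor, this controls the last (unweighted) sum. Second, for the weighted $L^2$ sums involving $\jb{s}^{-1/2}$, $\jb{s}^{-(1/4)-\eta}$ and $\jb{s}^{-2\eta}$, I would apply Keel--Smith--Sogge type weighted energy inequalities, choosing $\eta$ small enough that each $\jb{s}^{-\rho}$ integrates against the logarithmic growth it must absorb. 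Third, to recover the second sum involving $\pa_t^j\pa_x^\alpha u$ from the $\Gamma$-indexed norms, I would invoke Lemma~\ref{elliptic2} applied to $\pa_t^{j-2}u$, using $\Delta\pa_t^{j-2}u=\pa_t^j u-\pa_t^{j-2}G(\pa u)$ and controlling the residual local $L^2$ term by local energy decay.

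The principal obstacle is tracking the logarithmic losses with the correct exponent: the pointwise bound \eqref{ba4} comes with $\log^2(e+t+|x|)$, and in the iteration used to close the weighted estimates each invocation of \eqref{ba4} or \eqref{ba4t} costs an additional logarithm that must be absorbed into $\mathcal R$. This accumulation is precisely what forces the lifespan to be $\exp(C\varepsilon^{-1})$ rather than the sharper $\exp(C\varepsilon^{-2})$ available in the Dirichlet case (Remark~\ref{Rem1.4}). A secondary but genuine difficulty is the non-commutativity of $\Lambda$ with the Neumann condition, which makes the cutoff decomposition combined with Lemma~\ref{LocalEnergyDecay} on the near-boundary part indispensable for the whole $\Gamma$-iteration to run.
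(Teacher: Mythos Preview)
Your treatment of the weighted $L^\infty$ piece is broadly correct and matches the paper's Section~6: one applies Theorem~\ref{main} to $u$ with $f=G(\pa u)$, estimates the cubic source by putting two factors in $L^\infty$ (via the first sum of $\en_{H,K}$) and one in $L^2$ (via Lemma~\ref{KlainermanSobolev}), and the logarithmic factors accumulate as you say.

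The genuine gap is in your scheme for the weighted $L^2$ sums. The paper does \emph{not} use Keel--Smith--Sogge type spacetime estimates, and it is unclear how such estimates would yield the required growth bounds on $\|\Gamma^\delta\pa u(t)\|_{L^2_\Omega}$ of order $\jb{t}^{1/2}$, $\jb{t}^{(1/4)+\eta}$, $\jb{t}^{2\eta}$, $\log^{1/2}(e+t)$: KSS estimates control spacetime integrals of $\jb{x}^{-1-\delta}|\pa u|^2$, not time-pointwise $L^2$ norms. Nor does the paper use a cutoff $\chi$ to absorb boundary commutators in the $L^2$ energy argument; the cut-off decomposition you describe is the mechanism behind the \emph{proof} of Theorem~\ref{main} in Section~\ref{sec.pointwise}, not something applied again on top of it.

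What the paper actually does for the $\Gamma$-energies is a four-stage bootstrap on the boundary term. Writing $\frac{d}{dt}E(\Gamma^\delta u;t)=I_\delta(t)+II_\delta(t)$ with $II_\delta(t)=\int_{\pa\Omega}(\nu\!\cdot\!\nabla_x\Gamma^\delta u)\,\pa_t\Gamma^\delta u\,dS$, one first bounds $II_\delta$ crudely via the trace theorem by $\sum_{1\le j+|\gamma|\le|\delta|+2}\|\pa_t^j\pa_x^\gamma u(t)\|_{L^2_\Omega}^2$, which the Section~4 estimates already control; integrating gives the $\jb{t}^{1/2}$ bound for $|\delta|\le K-2$. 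Then for smaller $|\delta|$ one exploits that $\pa\Omega\subset B_1$ so that $w_\nu(t,x)\lesssim\jb{t}^{-\nu}$ on the boundary, and applies the pointwise decay \eqref{ba4weak} and \eqref{ba4t} of Theorem~\ref{main} to get $|II_\delta(t)|\lesssim\jb{t}^{-(3/2)+\eta}\log^4(e+t)(\ve^2+A_{|\delta|+4}(t)^2)$; the quantity $A_{|\delta|+4}$ is in turn bounded via Lemma~\ref{KlainermanSobolev} by the $L^2$ norms at level $|\delta|+6$. Each pass through this loop costs six derivatives and improves the time exponent, which is precisely why $\en_{H,K}$ is built with four tiers at $|\delta|\le K-2,\,K-8,\,K-14,\,K-20$ and four different weights. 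Your proposal does not account for this structure, and without it the estimate cannot close.
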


Let us explain how from \eqref{eq.energy}
we can gain the lifespan estimate.
Suppose that the above claim is true.
If we assume \eqref{eq.smalldata.semi}
for some $m$ and $s$ which are sufficiently large, then, as we have mentioned, there
exists $C_*>0$ such that
$\en_{H,K}(0)< 2C_*\ve$. We may assume $C_*\ge \max\{C_1,1\}$.
 We set $\ve_0=\min \{(2C_*)^{-1}, 1\}$ and suppose that $0<\ve\le \ve_0$, so that we have
$\ve\le 1$ and $2C_*\ve\le 1$.
We put
$$
 T_*(\ve):=\sup \left\{T\in [0, T^*):\, \en_{H,K}(T)\le 2C_*\ve\right\}.
$$
In particular, for any $T\le  T_*(\ve)$, we have $ \en_{H,K}(T)\le 1$.
From \eqref{eq.energy} with $T=T_*(\ve)$, we get
\begin{equation*}
\en_{H,K}\left(T_*(\ve)\right)\le C_*\varepsilon + {\mathcal R}\left((2C_* \varepsilon)^{P} \log^{Q}\left(e+T_*(\ve)\right)\right)
(3C_* \varepsilon).
\end{equation*}
We are going to prove
\begin{equation}
\label{eq.lifespan00}
{\mathcal R}\left((2C_* \varepsilon)^{P} \log^{Q}\left(e+T^*\right) \right) >\frac{1}{4}
\end{equation}
by contradiction. Suppose
that $T^*$ satisfies
\begin{equation} \label{eq.lifespan}
{\mathcal R}\left((2C_* \varepsilon)^{P} \log^{Q}\left(e+T^*\right) \right) \le \frac{1}{4}.
\end{equation}
Since $T_*(\ve)\le T^*$, and $\mathcal R$ is an increasing function,
we obtain
\begin{equation*}
\en_{H,K}\left(T_*(\ve)\right)
  \le \frac{7}{4} C_* \varepsilon
  <2C_* \varepsilon.
\end{equation*}
Therefore we get $T_*(\ve)=T^*$, because otherwise
the continuity of $\en_{H,K}(T)$ implies that
there exists $\widetilde{T}>T_*(\ve)$ satisfying $\en_{H,K}(\widetilde{T})\le 2C_*\ve$, which contradicts the definition of $T_*(\ve)$.
However, if $T_*(\ve)=T^*$, and $H,K$ are sufficiently large,
we can prove
\begin{align}
 \|u\|_{L^\infty_{T^*}H^9(\Omega)}+\|\pa_tu\|_{L^\infty_{T^*}H^8(\Omega)}
& \lesssim
\ve + (1+T^*) \en_{H,K}(T^*) \label{eq.stima98}\\
&=\ve + (1+T_*(\ve)) \en_{H,K}\left(T_*(\ve)\right)
 \lesssim
\ve + (1+T_*(\ve)) 2C_*\ve, \nonumber
\end{align}
and we can extend the solution beyond the time $T^*$
by the local existence theorem, which contradicts the definition of
$T^*$. Therefore \eqref{eq.lifespan} is not true,
and we obtain \eqref{eq.lifespan00}.
This means that,
for any $\varepsilon \le \varepsilon_0$,
there exists $\tilde{C}>0$ such that
\begin{equation}\label{eq.lifePQ}
T^*>\exp\{\tilde C\epsilon^{-P/Q}\}.
\end{equation}
It remains to show \eqref{eq.stima98}.
It is evident that
$$
\|u\|_{L^\infty_{T^*}H^9(\Omega)}+\|\pa_tu\|_{L^\infty_{T^*}H^8(\Omega)}\lesssim \|u\|_{L^\infty_{T^*}L^2_\Omega}+\en_{0,9}(T^*).
$$
In order to estimate $\|u\|_{L^\infty_{T^*}L^2_\Omega}$ we will use the expression
$$
u(t,x)=u(0,x)
 +\int_0^t \partial_t u(\tau, x) d\, \tau,
$$
which leads to
$$
\|u\|_{L^\infty_{T^*}L^2_\Omega}\lesssim
\ve +T^*\en_{0,1}(T^*).
$$

As a conclusion, we obtain
\eqref{eq.lifespanT1.semi}, once we can show
that Claim~\ref{claim} is true
with $P=Q=1$.
This will be done in the next three sections.

\section{Energy estimates for the standard derivatives}
In this section we are going to estimate
$\|\pa_t^j\pa_x^\alpha u\|_{L^\infty_TL^2_\Omega}$ for $j+|\alpha|\ge 1$.
In the first subsection, we consider the case where $j\ge 0$ and $ |\alpha|=1$. This can be done directly through the standard energy inequalities.
In the second subsection, the case where $j\ge 1$ and $|\alpha|\ge 2$
will be treated with the help of the elliptic estimate, Lemma~\ref{elliptic2}.
In the third subsection, we consider the case where $j=0$ and $|\alpha|\ge 2$. Lemma~\ref{elliptic2} will be used again, but this time we need the estimate of $\| u\|_{L^\infty_TL^2(\Omega_{R+1})}$ for some $R>0$,
which is not included in the definition of $\en_{H,K}(T)$. Since we
are considering the $2$D Neumann problem, it seems difficult
to use some embedding theorem to estimate $\| u\|_{L^\infty_TL^2(\Omega_{R+1})}$ by $\|\nabla_x u \|_{L^\infty_TH^k(\Omega)}$ with some positive integer $k$.
Instead, we will employ the $L^\infty$ estimate, Theorem~\ref{main},
for this purpose.
\subsection{On the energy estimates for the derivatives in time}
First we set
\begin{equation*}
E(v;t)=\frac12 \int_{\Omega} \{|\partial_t v(t,x)|^2+|\nabla_x v(t,x)|^2\} dx
\end{equation*}
for a smooth function $v=v(t,x)$.

Let $j$ be a nonnegative integer.
Since $\partial_t$ commutes with the restriction of the function to $\partial \Omega$, we have
$\partial_\nu \partial_t^j u(t,x)=0$ for all $(t,x)\in (0,T) \times \partial \Omega$.
Therefore, by the standard energy method, we find
\begin{equation*}
\frac{d}{dt} E(\partial_t^j u;t)=
\int_{\Omega} \partial_t^j( G(\partial u))(t,x)\,\partial_t^{j+1} u(t,x) dx.
\end{equation*}

Recalling the definition of $\en_{H,K}(T)$, for $j+|\alpha|\ge 1$ we have
\begin{equation}\label{eq.nu}
|\partial^j_t\nabla_x^\alpha u(t,x)|\le w_{1/2}(t,x)
\en_{j+|\alpha|, 0}(T),
 \quad x\in \Omega, t\in [0,T).
\end{equation}
Applying \eqref{eq.nu} and the Leibniz rule we find
$$
\frac{d}{dt} E(\partial_t^j u;t)\lesssim
\|w_{1/2}(t)\|_{L^\infty_\Omega}^2\en_{[j/2]+1,0}^2(T)\sum_{h=0}^{j}
\int_{\Omega} |\partial_t^h \partial u(t,x) |\,|\partial_t^{j+1} u(t,x) |dx.
$$
It is also clear that if $j+|\alpha| \ge 1$,  one has
$$
\|\partial_t^j\partial_x^\alpha u(t)\|_{L^2_{\Omega}} \le
 \en_{0,j+|\alpha|} (T), \quad t\in [0,T).
$$
This gives
$$
\frac{d}{dt} E(\partial_t^j u;t)\lesssim
\|w_{1/2}(t)\|_{L^\infty_\Omega}^2 \en^2_{[j/2]+1,0}(T) \en^2_{0,j+1}(T).
$$
Since $\en_{H,K}(T)$ is increasing in $H$ and $K$, we get
$$
\frac{d}{dt} E(\partial_t^j u;t)\lesssim \|w_{1/2}(t)\|_{L^\infty_\Omega}^2 \en^4_{[j/2]+1,j+1}(T).
$$
As a trivial consequence of \eqref{eq.ome},
we find $w_{1/2}(t,x)\le \langle t\rangle^{-1/2}$, so that
\begin{equation*}
\frac{d}{dt} E(\partial_t^j u;t)\lesssim \langle t \rangle^{-1}\en^4_{[j/2]+1,j+1}(T).
\end{equation*}
After integration this gives
\begin{eqnarray}\label{eq.ap9}
\sum_{l=0}^{j} \|\partial_t^{l+1} u(t)\|_{L^2_\Omega}+
\sum_{l=0}^{j} \|\partial_t^{l} \nabla_x u(t)\|_{L^2_\Omega}
\lesssim \en_{j+1}(0)+\en^2_{j+1}(T)\log^{1/2}(e+t)
\end{eqnarray}
for any $j\ge 0$ and $t \in [0,T)$, where
$$\en_s(T)=\en_{[(s-1)/2]+1,s}(T)$$ for any integer $s\ge 0$.

\subsection{On the energy estimates for the space-time derivatives.}\label{sec.32}
Since the spatial derivatives do not preserve
the Neumann boundary condition,  we need to use
elliptic regularity results.

We shall show that for $j\ge 1$ and $k\ge 0$ it holds
\begin{eqnarray}\label{eq.ap11}
\sum_{|\alpha|=k} \|\partial_t^{j} \partial_x^\alpha u(t)\|_{L^2_\Omega}
 \lesssim \en_{j+k}(0)+\en^2_{j+k}(T)\log^{1/2}(e+T)+\en^3_{j+k-1}(T)
\end{eqnarray}
with $\en_s(T)=\en_{[(s-1)/2]+1, s}(T)$
as before.

It is clear that (\ref{eq.ap11}) follows from (\ref{eq.ap9}) when $j\ge 1$ and $k=0,1$.

Next we suppose that
(\ref{eq.ap11}) holds for $j\ge 1$ and $k\le l$
with some positive integer $l$.
Let $|\alpha|=l+1$ and $j \ge 1$.
Since $|\alpha|\ge 2$,
we apply to $\partial_t^{j} u$ the elliptic estimate
(Lemma~\ref{elliptic2}) and we obtain
$$
    \|\partial_x^\alpha \partial_t^j u(t)\|
\lesssim \|\Delta \partial_t^{j} u(t)\|_{H^{l-1}(\Omega)}
    +\|\partial_t^{j} u(t)\|_{H^{l}(\Omega)}.
$$
By \eqref{eq.ap11} for $k\le l$, we see
that the second term has the desired bound.
On the other hand, using the fact that $u$ is a solution to \eqref{eq.PMN}, for the first term we have
\begin{eqnarray}\nonumber
\|\Delta \partial_t^{j} u(t)\|_{H^{l-1}(\Omega)}\lesssim \|\partial_t^{j+2} u(t)\|_{H^{l-1}(\Omega)}
 + \|\partial_t^{j} (G(\partial u))(t)\|_{H^{l-1}(\Omega)}.
\end{eqnarray}
Since $(j+2)+(l-1)=j+l+1$, it follows from \eqref{eq.ap11} for $k=l-1$
with $j$ replaced by $j+2$ that
$$
\|\pa_t^{j+2} u(t)\|_{H^{l-1}(\Omega)}\lesssim
\en_{j+l+1}(0)+\en^2_{j+l+1}(T)\log^{1/2}(e+T)+\en^3_{j+l}(T),
$$
which is the desired bound.
Finally,  observing that
$w_{1/2}(t,x)\le 1$, we get
\begin{align*}
\|\partial^j_t G(\partial u)(t) \|_{H^{l-1}(\Omega)}\lesssim
\sum_{1\le |\beta|\le [(j+l-1)/2]+1}\|\partial^\beta u(t)\|_{L^\infty_\Omega}^2
 \sum_{1\le |\gamma|\le j+l}\| \partial^\gamma u(t)\|_{L^2_\Omega}
\lesssim \en^3_{j+l}(T).
\end{align*}
Combining these estimates, we obtain \eqref{eq.ap11} for $j\ge 1$ and $k=l+1$.
This completes the proof of \eqref{eq.ap11} for $j\ge 1$ and $k\ge 0$.
% It remains to estimate $\|\partial_t^{j+2} \partial^\beta u(t)\|$ with $0\le |\beta|\le |\alpha|-2$. If $|\beta|\le |\alpha|-3$, we apply the inductive assumption.
% For the case $|\beta|=|\alpha|-2$ we repeat the same argument arriving at
% \begin{eqnarray*}
% \|\partial_t^{j+2} \partial^\beta  u(t)\| &\lesssim & \|\Delta \partial_t^{j+2} u\|_{H^{|\alpha|-4}(\Omega)}+\| \partial_t^{j+2} u \|_{H^{|\alpha|-3}(\Omega)}\\
% &\lesssim & \|\partial_t^{j+4} u(t)\|_{H^{|\alpha|-4}(\Omega)}
%  + \|\partial_t^{j+2} (G(\partial u))(t)\|_{H^{|\alpha|-4}(\Omega)}+\| \partial_t^{j+2} u \|_{H^{|\alpha|-3}(\Omega)}.
% \end{eqnarray*}
% In a finite number $N$ of these steps we reach  $\|\partial_t^{j+N} u(t)\|$ or  $\|\partial_t^{j+N} \nabla u(t)\|$ and conclude the estimate
% by means of \eqref{eq.ap9}.

\subsection{On the energy estimates for the space derivatives}\label{sec.spaceder}
Our aim here is to %show \eqref{eq.ap11} for $j=0$ and $|\alpha| \ge 1$.
estimate $\|\partial_x^\alpha u\|_{L^\infty_TL^2_{\Omega}}$ for $|\alpha|=k\ge 1$.
The estimate for $k=1$ is included in \eqref{eq.ap9}.
Let us consider the case $|\alpha|=k\ge 2$.
Let us fix $R>1$.
The elliptic estimate \eqref{eq.ap1} gives
\begin{eqnarray*}
\sum_{|\alpha|=k}
\|\partial_x^\alpha u\|_{L^\infty_T L^2_{\Omega}}
 &\lesssim & \|\Delta u\|_{L^\infty_T H^{k-2}(\Omega)}+\|u\|_{L^\infty_TH^{k-1}(\Omega_{R+1})}\\
&\lesssim & \|\partial_{t}^2 u\|_{L^\infty_T H^{k-2}(\Omega)}+\|G(\partial u)\|_{L^\infty_T H^{k-2}(\Omega)}+
\|u\|_{L^\infty_T H^{k-1}(\Omega_{R+1})}.
\end{eqnarray*}
The first term can be estimated by \eqref{eq.ap11} and we get
$$
 \|\partial_t^2 u\|_{L^\infty_T H^{k-2}(\Omega)}
 \lesssim \en_{k}(0)+\en^2_{k}(T)\log^{1/2}(e+T)+\en^3_{k-1}(T).
$$
For the second term, we obtain the following inequality
as before:
$$
\|G(\partial u)\|_{L^\infty_T H^{k-2}(\Omega)}
\lesssim \en_{k-1}^3(T).
$$
As for the third term, we get
\begin{align}
\|u\|_{L^\infty_TH^{k-1}(\Omega_{R+1})}\lesssim &
\sum_{1\le |\beta|\le k-1}\|\pa_x^\beta u\|_{L^\infty_TL^2(\Omega_{R+1})}
+\|u\|_{L^\infty_TL^2(\Omega_{R+1})}
\nonumber\\
\lesssim &
\sum_{1\le |\beta|\le k-1}\|\pa_x^\beta u\|_{L^\infty_TL^2_\Omega}
+\|u\|_{L^\infty_TL^\infty(\Omega_{R+1})}.
\nonumber
\end{align}
Now we fix $\mu\in (0, 1/2)$ and use \eqref{ba3} with $k=0$ to obtain
\begin{align}
\|u\|_{L^\infty_TL^\infty_\Omega}
\lesssim & {\mathcal A}_{2+\mu, 3}[\phi, \psi]
+\log(e+T) \sum_{|\delta|\le 3}\left\|\jb{y}^{1/2}W_{1,1+\mu}(s,y)
\Gamma^\delta \left(G(\pa u)\right)(s,y)\right\|_{L^\infty_TL^\infty_\Omega}.
\label{eq.infty11a}
\end{align}
By using \eqref{eq.ome}, for any $ s\in [0,T)$ we have
$$
\sum_{|\delta|\le 3}
|\Gamma^{\delta}G(\partial u)( s,y)|
\lesssim
\langle s+|y|\rangle^{-3/2}\left(\min\{ \langle y \rangle, \langle |y|- s \rangle\}\right)^{-3/2} \en^3_{4,0}(T).
$$
This implies
$$
\sum_{|\delta|\le 3}\left\| |y|^{1/2} W_{1,1+\mu}(s,y) \Gamma^\delta \left(G(\partial u)\right)(s,y) \right\|_{L^\infty_T L^\infty_{\Omega}} \lesssim
 \en^3_{4,0}(T),
$$
and \eqref{eq.infty11a} gives
\begin{equation}
\|u\|_{L^\infty_TL^\infty_\Omega}
%\|u\|_{L^\infty_TL^\infty(\Omega_{R+1})}
\lesssim \mathcal A_{2+\mu, 3}[\phi, \psi]+\en^3_{4,0}(T)\log (e+T).
\label{eq.inftyOK}
\end{equation}
%In order to conclude the  energy estimates for the space derivatives it suffices to observe that
%$\mathcal A_{0, 3}[u_0,u_1]\lesssim e_{3,0,\nu}(0)$.\\
Summing up the estimates above, for $|\alpha|=k\ge 2$, we get
\begin{align*}
\sum_{|\alpha|=k} \|\pa_x^\alpha u\|_{L^\infty_TL^2_\Omega}
\le &{\mathcal A}_{2+\mu,3}[\phi, \psi]
{}+{\mathcal E}_{k}(0)+{\mathcal E}_{k}^2(T) \log^{1/2}(e+T)
{}+{\mathcal E}_{k-1}^3(T)+{\mathcal E}_{4,0}^3\log(e+T)\\
& {}+\sum_{1\le |\alpha|\le k-1}  \|\pa_x^\alpha u\|_{L^\infty_TL^2_\Omega}.
\end{align*}
Finally we inductively obtain
$$
\sum_{|\alpha|=k} \|\pa_x^\alpha u\|_{L^\infty_TL^2_\Omega}
\le {\mathcal A}_{2+\mu,3}[\phi, \psi]
{}+{\mathcal E}_{k}(0)+{\mathcal E}_{k}^2(T) \log^{1/2}(e+T)
{}+{\mathcal E}_{k-1}^3(T)+{\mathcal E}_{4,0}^3(T)\log(e+T)
$$
for $k\ge 1$.

\subsection{Conclusion for the energy estimates of the standard derivatives}
If $m$ and $s$ are sufficiently large, \eqref{eq.smalldata.semi}
and the Sobolev embedding theorem lead to
$$
{\mathcal A}_{2+\mu,3}[\phi,\psi]+{\mathcal E}_{K}(0) \lesssim \|\phi\|_{H^{m+1,s}(\Omega)}+\|\psi\|_{H^{m,s}(\Omega)} \lesssim \ve.
$$
Summing up the estimates in this section, we get
\begin{equation}
\sum_{1\le j+|\alpha|\le K} \|\pa_t^j \pa_x^\alpha u\|_{L^\infty_TL^2_\Omega}
\lesssim \ve+
{\mathcal E}_{K}^2(T) \log^{1/2}(e+T)
{}+{\mathcal E}_{K}^3(T)\log(e+T)
\label{Concl01}
\end{equation}
for each $K\ge 7$.

\section{On the energy estimates for the generalized derivatives}\label{sec.eegdI}
Throughout this section and the next one, we suppose that $K$ is sufficiently large,
and we assume that $\en_K(T)\le 1$.
\subsection{Direct energy estimates for the generalized derivatives}
Let $|\delta|\le K-2$.
Recalling (\ref{eq.commute}), it follows that
\begin{eqnarray}
\frac{d}{dt} E(\Gamma^\delta u;t) &=&
\int_{\Omega} \Gamma^\delta G(\partial u)(t,x)\,\partial_t \Gamma^\delta u(t,x) dx
\nonumber\\
&&+\int_{\partial \Omega} \nu\cdot \nabla_x \Gamma^\delta u(t,x)\,\partial_t \Gamma^\delta u(t,x) dS=:I_{\delta}(t)+I\!I_{\delta}(t),
\label{eq.eegd}
\end{eqnarray}
where $\nu=\nu(x)$ is the unit outer normal vector at $x \in \partial \Omega$
and $dS$ is the surface measure on $\partial \Omega$.

Since $G(\partial u)$ is a homogeneous polynomial of order three,  we can say that
\begin{equation}\label{eq.eegdn}
|\Gamma^\delta G(\partial u)\,\partial_t \Gamma^\delta u| \lesssim \sum_{|\delta_1|\le [|\delta|/2]}|\Gamma^{\delta_1} \pa u|^2
 \sum_{|\delta_2|\le |\delta|}|\Gamma^{\delta_2} \pa u(t,x)|^2.
\end{equation}
Applying the H\"older inequality and taking the $L^\infty$ norm of the first factor,
we arrive at
\begin{equation}\label{eq.eegdI}
|I_{\delta}(t)| \lesssim \langle t\rangle^{-1}{\mathcal \en}^2_{
 [|\delta|/2]+1, 0}(T) \jb{t}
 \en^2_{0, %|\delta|
 K }(T) \lesssim \en^4_{K}(T),
\end{equation}
since $|\delta|\le K-2$.

Now we treat the boundary term, by means of the trace theorem.
Since $\partial \Omega \subset B_{1}$, the norms of the generalized derivatives
on $\pa \Omega$
are equivalent to the norms of the standard derivatives. Hence for all $t\in (0,T)$ we have
$$
|I\!I_{\delta}(t)|\lesssim \sum_{1\le |\gamma|+k \le |\delta|+1} \|\partial_t^{k} %\nabla_x^\gamma
 \pa_x^\gamma u(t)\|^2_{L^2(\partial \Omega)}.
$$
Moreover, by the trace theorem and \eqref{Concl01}, we see that
$$
|I\!I_{\delta}(t)|
\lesssim \sum_{1\le |\gamma|+k \le |\delta|+2}
   \|\partial_t^{k} \pa_x^\gamma u(t)\|_{L^2_{\Omega}}^2
\lesssim
\left( \ve+{\mathcal R}_0( {\mathcal E}_{K}(T) \log^{1/2}(e+T)) \en_K(T) \right)^2,
$$
because of the assumption $|\delta|\le K-2$.
Here we put $${\mathcal R}_0(s)=s+s^2.$$
Summarizing the above estimates, for any $K\ge 7$ and
$|\delta|\le K-2$, it holds
\begin{align*}
\frac{d}{dt} E(\Gamma^\delta u;t)
&\lesssim
\left(
\ve+{\mathcal R}_0( {\mathcal E}_{K}(T) \log^{1/2}(e+T)) \en_K(T)
\right)^2
+\en_{K}^4(T)
\\
& \lesssim
\left(
\ve+{\mathcal R}_0( {\mathcal E}_{K}(T) \log^{1/2}(e+T)) \en_K(T) \right)^2.
\end{align*}
For the last inequality, we recall that
%$|\delta|+1\le K$ and
$\en_K(T)\le 1$.
After integration, %for $j+|\alpha|+\beta\le K-1$ with $j+|\alpha|\ge 1$,
this gives
\begin{align}
\sum_{|\delta| \le K-2} \| \Gamma^\delta \pa u(t)\|_{L^2_\Omega}
%\|\partial_t^j \pa^\alpha_x \Lambda^\beta u(t)\|_{L^2_{\Omega}}
 & \lesssim \en_{K}(0)+
t^{1/2} \left(\ve+{\mathcal R}_0( {\mathcal E}_{K}(T) \log^{1/2}(e+T)) \en_K(T)\right)
\notag \\
 & \lesssim \jb{t}^{1/2} \left(\ve+{\mathcal R}_0( {\mathcal E}_{K}(T) \log^{1/2}(e+T)) \en_K(T)\right).
\label{eq.seceegdI}
\end{align}

\subsection{Refinement of the energy estimates for the generalized derivatives}

Let $1\le |\delta|\le K-8$.
Since $\pa \Omega$ is a bounded set, it follows from \eqref{eq.eegd} that
\begin{align*}
|I\!I_{\delta}(t)|\lesssim &   \| \Gamma^\delta \partial_t u(t)\|_{L^2(\partial \Omega)}
\sum_{|\gamma|\le |\delta|} \| \Gamma^\gamma \nabla_x u(t)\|_{L^2(\partial \Omega)}\\
\lesssim & \sum_{1\le |\gamma|\le \delta} \|\pa^\gamma\pa_t u(t)\|_{L^\infty(\pa \Omega)}
\sum_{|\gamma|\le |\delta|} \| \pa^\gamma \nabla_x u(t)\|_{L^\infty(\partial \Omega)}.
\end{align*}
Since we have $|x|\le 1$ for $x\in \pa \Omega$,
we get $\langle |x|+t\rangle {} \simeq {}\langle t\rangle \simeq
\langle |x|-t\rangle$ for $x\in \pa\Omega$.
In particular we get $\sup_{x\in \pa\Omega} w_\nu(t,x) \lesssim \langle t\rangle^{-\nu}$
for $0<\nu \le 1$.
We fix sufficiently small and positive constants
$0<\eta<1/4$ and $\mu>0$.
Applying the pointwise estimates \eqref{ba4weak} and \eqref{ba4t} in Theorem~\ref{main}, we get
$$
|I\!I_{\delta}(t)|\lesssim \langle t\rangle^{-(3/2)+\eta}
\log^4 (e+t)
  \left(\mathcal A^2_{2+\mu,|\delta|+4}[\phi,\psi]+A^{2}_{|\delta|+4}(t)\right),
$$
where
$$
 A_{s}(t)=\sum_{|\gamma|\le s}\left\|\,|y|^{1/2}W_{1,1}(s,y)\Gamma^\gamma \left(G(\pa u)\right)(s,y) \right\|_{L^\infty_tL^\infty_\Omega}.
$$
If $m$ and $s$ are sufficiently large, by the Sobolev embedding theorem we have
$A_{2+\mu,|\delta|+4}[\phi,\psi]\lesssim \ve$ and we obtain
\begin{equation}\label{eq.IIA1}
|I\!I_{\delta}(t)|\lesssim \langle t\rangle^{-(3/2)+\eta} \log^4 (e+t)
  \left(\ve^2+A^2_{|\delta|+4}(t)\right).
\end{equation}
%being $A_1(t)$ the function in Proposition \ref{prop.pwe1}.
% with $f(t,x)=G(\partial u)$, $N=|\delta|$.
In order to estimate $A_{|\delta|+4}(t)$, we argue as in \eqref{eq.eegdn}, so that
$$
\sum_{|\gamma|\le |\delta|+4} |\Gamma^\gamma G(\partial u)(s,y)|
 \lesssim w_{1/2}^2(s,y) \en^2_{[(|\delta|+4)/2]+1,0}(T)
 \sum_{|\gamma'|\le |\delta|+4} |\Gamma^{\gamma'} \partial u(s,y)|.
$$
Now using \eqref{eq.ome} and applying Lemma~\ref{KlainermanSobolev} to
estimate $|\Gamma^{\gamma'} \partial u|$, we obtain
$$
 \sum_{|\gamma|\le|\delta|+4}
|\Gamma^\gamma G(\partial u)(s,y)|\lesssim
 |y|^{-1/2} W_{1,1}^{-1}(s,y)\en^2_{[(|\delta|+4)/2]+1,0}(T)
 \sum_{|\gamma|\le |\delta|+6} {\|\Gamma^{\gamma} \partial u(s,\cdot)\|_{L^2_\Omega}},
$$
which yields
\begin{equation}
 \label{Est_A_1}
A_{|\delta|+4}(t)
\lesssim \en^2_{K}(T)
 \sum_{|\gamma|\le |\delta|+6} \|\Gamma^{\gamma}
  \partial u(s,y)\|_{L^\infty_t{L^2_\Omega}}
%\lesssim \en^3_{K}(T) \jb{t}^{1/2},
\end{equation}
because we have $[(|\delta|+4)/2]\le [(K-1)/2]$ %and $|\delta|+6\le K-2$
for $|\delta|\le K-8$.
Observing that
$$
 \sum_{|\gamma|\le |\delta|+6} \|\Gamma^{\gamma}
  \partial u(s,y)\|_{L^\infty_t{L^2_\Omega}}
\lesssim \jb{t}^{1/2} \en_{K}(T)
$$
for $|\delta|\le K-8$, we see from \eqref{eq.IIA1} and \eqref{Est_A_1} that
$$
|I\!I_{\delta}(t)| \lesssim
 \langle t\rangle^{-(1/2)+2\eta} \left(\ve^2+\en^6_{K}(T)\right).
$$
Moreover for $|\delta|\le K-8$ the inequality \eqref{eq.eegdI} can be improved as
$$
|I_{\delta}(t)| \lesssim \langle t\rangle^{-1}{\mathcal \en}^2_{
 [|\delta|/2]+1, 0}(T) \left(\jb{t}^{1/4+\eta} \en_{0,K}
(T)\right)^2 \lesssim  \langle t\rangle^{-(1/2)+2\eta}  \en^4_{K}(T).
$$
Coming back to \eqref{eq.eegd}, one can conclude
from the assumption $\en_K(T)\le 1$ that
\begin{eqnarray}
\sum_{1 \le|\delta| \le K-8} \| \Gamma^\delta \pa u(t)\|_{L^2_\Omega}
%\|\partial_t^j\nabla^\alpha_x\Lambda^\beta u(t)\|_{L^2_\Omega}
&\lesssim& \en_{K}(0)
 +\jb{t}^{(1/4)+\eta} \left(\ve+\en^2_{K}(T)\right)
\nonumber
 \\
\label{eq.inftyeegd}
&\lesssim &
  \jb{t}^{(1/4)+\eta} \left(\ve+\en^2_{K}(T)\right).
\end{eqnarray}

%%%%%%%%%%%%%%%%%%%

Next step is to improve this estimate for lower $|\delta|$ in order to avoid the polynomial growth in $t$.
Let $1\le|\delta|\le K-14$. From \eqref{Est_A_1} and the definition of $\en_K(T)$
we get
$$
A_{|\delta|+4}(t)\lesssim \en^3_K(T) \jb{t}^{(1/4)+\eta}.
$$
From \eqref{eq.IIA1}, it follows that
\begin{eqnarray*}
|I\!I_{\delta}(t)| &\lesssim & \langle t \rangle^{-(3/2)+\eta} \log^{4} (e+t)
    \left(\varepsilon^2 + \langle t\rangle^{(1/2)+2\eta} \en^6_K(T)\right) \\
&\lesssim&
 \langle t \rangle^{-1+4\eta} \left(\varepsilon^2+\en^6_K(T)\right).
\end{eqnarray*}
On the other hand, for $|\delta|\le K-14$
it holds
$$
|I_{\delta}(t)| \lesssim \langle t\rangle^{-1}{\mathcal \en}^2_{
 [|\delta|/2]+1, 0}(T) \left(\jb{t}^{2\eta} \en_{0,K}(T)\right)^2 \lesssim  \langle t\rangle^{-1+4\eta}  \en^4_{K}(T).
$$
Summing up these estimates and integrating \eqref{eq.eegd},
we get
\begin{equation}\label{eq.seceegdIIj00}
\sum_{1\le |\delta| \le K-14} \| \Gamma^\delta \pa u(t)\|_{L^2_\Omega}
\lesssim \jb{t}^{2\eta} \left(\varepsilon +\en^2_K(T) \right).
\end{equation}

We repeat the above procedure once again
with $1\le|\delta|\le K-20$.
Being $|\delta|+6\le K-14$, from \eqref{Est_A_1}
we have $A_{|\delta|+4}(t)\lesssim \jb{t}^{2\eta}\en^3_K(T)$. In turn this implies
\begin{eqnarray*}
|I\!I_{\delta}(t)| & \lesssim &
\langle t \rangle^{-(3/2)+\eta} \log^{4} (e+t)
    \left(\varepsilon^2 + \langle t\rangle^{4\eta} \en^6_K(T)\right)\\
& \lesssim & \jb{t}^{-(3/2)+6\eta} \left(\ve^2+\en_K^6(T)\right).
\end{eqnarray*}
In this case $I_{\delta}(t)\le \langle t \rangle^{-1}\en_K^4(T)$. After integration
we get
\begin{eqnarray}
\sum_{ 1\le|\delta| \le K-20} \| \Gamma^\delta \pa u(t)\|_{L^2_\Omega}
%\|\partial_t^j\nabla^\alpha_x\Lambda^\beta u(t)\|_{L^2_{\Omega}}
&\lesssim & \en_{K}(0)+\en^2_{K}(T)\log^{1/2}(e+t)+\ve+\en^3_K(T)
\nonumber\\
&\lesssim &\varepsilon + \en^2_K(T)\log^{1/2}(e+t).
\label{eq.eegdfinal}
\end{eqnarray}
This estimate is the best we can obtain with our methods due to the estimate of $I_{\delta}(t)$.

\section{Boundedness for the $L^\infty$ norm
and the conclusion of the proof of Theorem~\ref{thm.mainsemi}}

Summarizing \eqref{Concl01}, \eqref{eq.seceegdI}, \eqref{eq.inftyeegd}, \eqref{eq.seceegdIIj00}, \eqref{eq.eegdfinal} we have
\begin{equation}\label{eq.e0K}
\en_{0,K}(T)\lesssim \ve +\mathcal R_0(\en_{[(K-1)/2]+1,K}(T) \log^{1/2}(e+T))\en_{[(K-1)/2]+1,K}(T)
\end{equation}
with $K\ge 20$ and $\mathcal R_0(s)=s+s^2$.
If $\en_{H,0}(T)$ with $H=[(K-1)/2]+1$ has the same bound of $\en_{0,K}(T)$
given in \eqref{eq.e0K}, then we conclude that the estimate \eqref{eq.energy}
in the Claim~\ref{claim} holds for $P=1$ and $Q=1/2$, and hence $T^* \ge
\exp(\tilde C \epsilon^{-2})$.
However, $\mathcal R_0$ (and hence $Q$) will be changed due to the
following argument. Such a modification yields a worse estimate for the lifespan.

Since we assume $\phi,\psi \in \mathcal{C}_0^\infty(\overline{\Omega})$,
there is a positive constant $M$ such that $|x|\le t+M$ in $\supp u(t,\cdot)$ for $t\ge 0$.
Hence we have $\log(e+t+|x|)\lesssim \log(e+t)$ in $\supp u(t, \cdot)$.

From \eqref{Est_A_1} and the definition of $\en_K(T)$,
it follows that
$A_{|\delta|+4}(t)\lesssim \en^3_K(T)$
for $K\ge 26$ and $|\delta|\le K-26$.
Let $\mu>0$.
Then we have ${\mathcal A}_{2+\mu, K-22}[\phi,\psi]\lesssim \ve$ if $m$ and $s$ are sufficiently large.
For fixed $0<\eta<1/2$, by \eqref{ba4weak},  we obtain
\[
\sum_{|\gamma|\le K-26}| \Gamma^\gamma \partial u(t, x)|
\lesssim \mathcal B(\varepsilon,t)  \,w_{(1/2)-\eta}(t,x)
\]
where
\[
\mathcal B(\varepsilon,t) :=\ve +\log^{2}(e+t)\en_K^3(T).
\]
Using this estimate, we obtain
\[
\sum_{|\gamma|\le K-26}|\Gamma^\gamma G(\partial u)(t,x)|\lesssim w_{1/2}^2(t,x)\en^2_{[(K-1)/2]+1,0}(T)w_{(1/2)-\eta}(t,x)\mathcal B(\varepsilon,t).
\]
Since $|y|^{1/2} w_{1/2-\eta}\lesssim 1$,  this implies
\[
\mathcal A_{|\delta|+4}(t)\lesssim \en_K^2(T)\mathcal B(\varepsilon,t)
\]
for any $|\delta|+4\le K-26$.  %and $K\ge 31$.
Therefore, \eqref{ba4} in Theorem~\ref{main} yields
\begin{eqnarray*}
\sum_{|\gamma|\le K-30}| \Gamma^\gamma \partial u(t, x)|\lesssim
  \left( \varepsilon+ \mathcal B(\varepsilon,t) \en^2_{K}(T)\log^2(e+t)
   \right) w_{1/2}(t,x).
\end{eqnarray*}
For $K\ge 61$ we have $[(K-1)/2]+1 \le K-30$, and we conclude that
\begin{equation}
\label{Concl02}
\sum_{|\gamma|\le [(K-1)/2]+1} \|w_{1/2}^{-1} \Gamma^\gamma \partial u\|_{L^\infty_TL^\infty_\Omega}\lesssim
  \varepsilon+ \mathcal B(\varepsilon,t) \en_K^2(T)\log^2(e+T).
\end{equation}

Finally, we combine \eqref{eq.e0K} and \eqref{Concl02}
to obtain
\begin{align*}
\en_K(T)
\lesssim & {} \ve+ (\ve+\en_K(T))\times\\
&\times \left( \en_K(T)\log^{1/2}(e+T)+\en_K^2(T)\log^2(e+T)+\en_K^4(T)\log^{4}(e+T) \right).
 %+\en_K^5(T)\log^{9/5}(e+T).
\end{align*}
In order to find
\begin{align*}
\en_K(T)\le C_1 \ve+{\mathcal R}\left(\en^P_K(T)\log^Q(e+T)\right)
(\varepsilon+\en_K(T))
\end{align*}
with as larger $P/Q$ as possible, we take
$$
\mathcal R(\tau):=C_2 (\tau+\tau^2+\tau^4)  %+\tau^5)
$$
and $P=Q=1$.
Recalling the discussion in Section~\ref{AAPMT}, we obtain Theorem~\ref{thm.mainsemi}.

\section{Proof of pointwise estimates}\label{sec.pointwise}
%%%%%%%%%%%%%%%%%%%%%%%%%%%%%%%%%%%%%%%%%%%%%%%%%%%%%%%
In this section, we go back to the Neumann problem~\eqref{eq.PMfT} and
will prove Theorem~\ref{main} by combining the decay estimates for the Cauchy problem in $\R^2$ and
the local energy decay estimate through the cut-off argument.
%%%%%%%%%%%%%%%%%%%%%%%%%%%%%%%%%%%%%%%%%%%%%%%%%%%%%%
\subsection{Decomposition of solutions}
Recall the definitions of  $X(T)$ and $S[\vec{u}_0, f](t,x)$, $K[\vec{u}_0](t,x)$, $L[f](t,x)$
in Subsection~\ref{LinearNeumann}.
In the same manner, the solution of the Cauchy problem
\begin{equation}\label{eq.PCgT}
\begin{array}{ll}
(\partial_t^2-\Delta) v = g & (t,x) \in (0,T)\times {\R}^2,\\
 v(0,x)=v_0(x), & x\in \R^2,\\
(\partial_t v)(0,x)=v_1(x), & x\in {\R}^2,
\end{array}
\end{equation}
will be denoted by $S_0[\vec{v}_0, g](t,x)$ with $\vec{v}_0=(v_0, v_1)$.
Then we have
$$
S_0[\vec{v}_0,g](t,x)= K_0[\vec{v}_0](t,x)+L_0[f](t,x),
$$
where
$K_0[\vec{v}_0](t,x)$ and
$L_0[g](t,x)$ are the solutions of \eqref{eq.PCgT} with $g=0$ and $\vec{v}_{0}=(0,0)$, respectively.
In other words, $K_0[\vec{v}_0](t,x)=S_0[\vec{v}_0, 0](t,x)$ and $L_0[g](t,x)=S_0[(0,0), g](t,x)$.

Now we proceed to introduce the cut-off argument.
For $a >0$, we denote by $\psi_a$ a smooth radially symmetric function
on ${\R}^2$ satisfying
\begin{equation}\label{eq.psia}
\begin{cases}
\psi_a(x)=0, &  |x| \le a, \\
\psi_a(x)=1, &  |x| \ge a+1.
\end{cases}
\end{equation}
% Finally, we set
% $$
% \Omega_r=\Omega \cap B_r(0);
% $$
% being $\mathcal O\subset\subset B_1(0)$, we see that $\Omega_a\not= \emptyset $ for any $a\ge 1$.

\begin{lemma}\label{lemma.decomposition}
Fix $a\ge 1$. Let $(u_0, u_1, f)\in X(T)$.
% $f \in \mathcal C^\infty([0,T)\times \bar\Omega)$ and $\vec{u}_0\in \mathcal C^\infty
% (\bar\Omega)\times \mathcal C\infty(\bar\Omega)$ satisfy
% the compatibility condition of infinity order in $\Omega$ for \eqref{eq.PMfT}.
Assume that for any $t\in (0,T)$ one has
$$
\text{supp}\,f(t,\cdot) \subset \overline{\Omega_{t+a}}\quad \text{and}\quad
\text{supp}\,u_0 \subset \overline{\Omega_a}, \
\text{supp}\,u_1 \subset \overline{\Omega_a}.
$$
Then we have
\begin{eqnarray}\label{eq.omo}
S[\vec{u}_0, f](t,x)=\psi_a(x) S_0[ \psi_{2a} \vec{u}_0, \psi_{2a}f](t,x)+\sum_{i=1}^4 S_i[ \vec{u}_0, f](t,x),
\end{eqnarray}
where
\begin{eqnarray}\label{eq.S1}
&& S_1[\vec{u}_0,f](t,x)=(1-\psi_{2a}(x))L[\,[\psi_a,-\Delta]S_0[\psi_{2a} \vec{u}_0, \psi_{2a}f] ](t,x),
\\ \label{eq.S2}
&& S_2[\vec{u}_0,f](t,x)=-L_0[\,[\psi_{2a},-\Delta]L[\,[\psi_a,-\Delta]S_0[\psi_{2a} \vec{u}_0, \psi_{2a}f] ] ](t,x),
\\ \label{eq.S3}
&& S_3[\vec{u}_0, f](t,x)=(1-\psi_{3a} (x)) S[(1-\psi_{2a}) \vec{u}_0, (1-\psi_{2a})f](t,x),
\\ \label{eq.S4}
&& S_4[\vec{u}_0, f](t,x)=-L_0[\,[\psi_{3a},-\Delta] S[(1-\psi_{2a}) \vec{u}_0, (1-\psi_{2a})f] ](t,x).
\end{eqnarray}
\end{lemma}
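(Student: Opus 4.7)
The plan is to start from the ansatz $u=\psi_a v + w$, where $u:=S[\vec{u}_0,f]$ is the true solution and $v:=S_0[\psi_{2a}\vec{u}_0,\psi_{2a}f]$ is the ``free'' Cauchy-problem solution associated with the cut-off data, and then derive the mixed problem satisfied by the correction $w$. A direct commutator computation gives $(\pa_t^2-\Delta)(\psi_a v) = \psi_a\psi_{2a}f-[\psi_a,-\Delta]v$, and the algebraic identity $\psi_a\psi_{2a}=\psi_{2a}$ (which holds because $a\ge 1$ implies $a+1\le 2a$, so $\psi_a\equiv 1$ wherever $\psi_{2a}\not\equiv 0$) turns the forcing into $\psi_{2a}f$. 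The same identity applied to the initial data yields $\psi_a\psi_{2a}u_j=\psi_{2a}u_j$. Consequently $w$ satisfies
\begin{equation*}
(\pa_t^2-\Delta)w=(1-\psi_{2a})f+[\psi_a,-\Delta]v,\quad w(0,\cdot)=(1-\psi_{2a})u_0,\quad (\pa_t w)(0,\cdot)=(1-\psi_{2a})u_1,
\end{equation*}
with $\pa_\nu w=0$ on $\pa\Omega$ (since $\pa\Omega\subset B_1\subset B_a$ implies $\psi_a\equiv 0$ near $\pa\Omega$, so $\psi_a v$ automatically satisfies the Neumann condition).

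By linearity of $S$, I would then split $w=U+V$ with
$$U:=S[(1-\psi_{2a})\vec{u}_0,(1-\psi_{2a})f],\qquad V:=L[[\psi_a,-\Delta]v].$$
Here the hypothesis $(u_0,u_1,f)\in X(T)$ transfers to each summand: for $U$ the cut-off $(1-\psi_{2a})$ equals $1$ near $\pa\Omega$ so the compatibility conditions are inherited from the original data, while for $V$ the source $[\psi_a,-\Delta]v$ is supported in the annulus $\{a\le|x|\le a+1\}$, hence vanishes in a neighborhood of $\pa\Omega$ and the compatibility conditions of infinite order are trivial.

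Next I would further decompose $V=(1-\psi_{2a})V+\psi_{2a}V$. The first summand is exactly $S_1$ by definition. For the second, I claim $\psi_{2a}V$, extended by $0$ across $\pa\Omega$, solves the Cauchy problem in $\R^2$ with zero data and source $\psi_{2a}[\psi_a,-\Delta]v-[\psi_{2a},-\Delta]V$. The first term vanishes identically because $\supp[\psi_a,-\Delta]v\subset\{|x|\le a+1\}\subset\{|x|\le 2a\}$ while $\psi_{2a}\equiv 0$ on that set. Hence $\psi_{2a}V=-L_0\bigl[[\psi_{2a},-\Delta]V\bigr]=S_2$. An entirely parallel argument applied to $U=(1-\psi_{3a})U+\psi_{3a}U$ identifies the two pieces with $S_3$ and $S_4$ respectively; here the key cancellation is $\psi_{3a}(1-\psi_{2a})\equiv 0$, valid for $a\ge 1$ since the supports of the two factors are disjoint (the interval $\{|x|\le 2a+1\}$ where $1-\psi_{2a}\neq 0$ is contained in $\{|x|\le 3a\}$ where $\psi_{3a}=0$).

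Assembling everything gives $u=\psi_a v+(S_1+S_2)+(S_3+S_4)$, which is the claimed identity. The main bookkeeping obstacle is verifying at each step that the inputs fed into $S$, $L$, $S_0$ and $L_0$ really lie in the appropriate $X(T)$-class (so that the solution operators are well-defined and the decomposition is rigorous). This reduces every time to two observations used above: the cut-offs $\psi_a,\psi_{2a},\psi_{3a}$ all vanish in a neighborhood of $\pa\Omega$, and the commutator sources $[\psi_\bullet,-\Delta](\cdot)$ are supported in thin annuli disjoint from the obstacle.
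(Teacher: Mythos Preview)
Your argument is correct and complete. The paper itself does not give a proof of this lemma but simply refers to \cite{Kub06}; your write-up reconstructs exactly the standard cut-off decomposition that underlies that reference: set $u=\psi_a v+w$, identify the mixed problem solved by $w$, split $w=U+V$ according to the two source terms, and then peel off $S_1,S_2$ from $V$ and $S_3,S_4$ from $U$ by a second layer of cut-offs. All the support/commutator cancellations you invoke ($\psi_a\psi_{2a}=\psi_{2a}$, $\psi_{2a}[\psi_a,-\Delta]v=0$, $\psi_{3a}(1-\psi_{2a})=0$, each valid because $a\ge 1$) are correct, and your check that the compatibility conditions are satisfied at every stage is exactly the bookkeeping needed to make the solution operators well-defined. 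So your proof is essentially the one the paper points to, spelled out in full.
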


For the proof, we refer to \cite{Kub06}.

Observe that the first term on the right-hands side of
(\ref{eq.omo}) can be evaluated by applying the
decay estimates for the whole space case. In contrast,
the local energy decay estimates for the mixed problem
work well in estimating $S_j[\vec{u}_0, f]$ for $1\le j\le 4$, because
we always have some localized factor in front of the operators $L$, $S$ and
in their arguments.
%%%%%%%%%%%%%%%%%%%%%%%%%%%%%%%%%%%%%%%%%%%%%%%%%%%%%%%
\subsection{Known estimates for the $2$D linear Cauchy problem}
In this subsection we recall the decay estimates for solutions of homogeneous wave equation.
Since $\Lambda K_0[v_0,v_1]=K_0[\Lambda v_0, \Lambda v_1]$ by
\eqref{eq.commute}, we find that Proposition 2.1 of \cite{k93} leads to the following.

\begin{lemma}
Let $m\in\N$.
For any $(v_0,v_1) \in \mathcal C^\infty_0(\R^2)\times \mathcal C^\infty_0(\R^2)$, it holds that
\begin{equation}\label{eq.kubota}
\langle t+|x| \rangle^{1/2}\log^{-1}\left(e+ \frac{\langle t+|x|\rangle}{\langle t-|x|\rangle}\right)\sum_{|\beta|\le m}|\Gamma^\beta K_0[v_0,v_1](t,x)|
\lesssim
\mathcal B_{3/2,m}[v_0,v_1].
\end{equation}
Under the same assumption, for any $\mu>0$ we have
\begin{equation}\label{eq.kubota2}
\langle t+|x|\rangle^{1/2}\langle t-|x|\rangle^{1/2}\sum_{|\beta|\le m}|\Gamma^\beta K_0[v_0,v_1](t,x)|\lesssim
\mathcal B_{2+\mu,m}[v_0,v_1].
\end{equation}
\end{lemma}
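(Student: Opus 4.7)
The plan is to derive both estimates from the scalar decay estimates of Proposition 2.1 in Kubota \cite{k93}, which bound $\sum_{|\alpha|\le m}|\pa^\alpha K_0[v_0,v_1](t,x)|$ under the weights $\jb{t+|x|}^{1/2}\log^{-1}(e+\jb{t+|x|}/\jb{t-|x|})$ and $\jb{t+|x|}^{1/2}\jb{t-|x|}^{1/2}$ on the left, by weighted $L^\infty$ norms of $v_0$, $\nabla_x v_0$ and $v_1$ together with their spatial derivatives up to order $m$ on the right. Since that result already handles all derivatives in $\pa=(\pa_t,\pa_{x_1},\pa_{x_2})$, the only extra work needed for a general $\Gamma^\beta=\pa^\alpha\Lambda^k$ is to absorb the rotational factor $\Lambda^k$ into the Cauchy data.

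For this one uses the commutation identity
\begin{equation*}
\Lambda^k K_0[v_0, v_1] = K_0[\Lambda^k v_0, \Lambda^k v_1],
\end{equation*}
which follows from \eqref{eq.commute} (giving $[\Lambda,\pa_t^2-\Delta]=0$) together with the tangency of $\Lambda$ at $\{t=0\}$: both sides solve the homogeneous wave equation with identical Cauchy data $(\Lambda^k v_0, \Lambda^k v_1)$, so uniqueness of solutions yields the identity.

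Writing $\Gamma^\beta=\pa^\alpha\Lambda^k$ with $|\alpha|+k\le m$, we then have $\Gamma^\beta K_0[v_0, v_1] = \pa^\alpha K_0[\Lambda^k v_0, \Lambda^k v_1]$. Applying the scalar estimate to the right-hand side yields a bound for $|\pa^\alpha K_0[\Lambda^k v_0, \Lambda^k v_1](t,x)|$ (with the appropriate weight on the left) in terms of weighted $L^\infty$ norms of $\pa_x^\gamma\Lambda^k v_0$, $\pa_x^\gamma\nabla_x\Lambda^k v_0$, and $\pa_x^\gamma\Lambda^k v_1$ for $|\gamma|\le|\alpha|$. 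Since $\widetilde{\Gamma}=(\pa_x,\Lambda)$ and each commutator $[\pa_{x_i},\Lambda]$ is again a first-order spatial derivative, every such expression can be rewritten, up to bounded coefficients, as $\widetilde{\Gamma}^\delta v_0$, $\widetilde{\Gamma}^\delta\nabla_x v_0$, or $\widetilde{\Gamma}^\delta v_1$ with $|\delta|\le|\alpha|+k\le m$. These are exactly the ingredients of $\mathcal B_{3/2,m}[v_0,v_1]$ and $\mathcal B_{2+\mu,m}[v_0,v_1]$, producing \eqref{eq.kubota} and \eqref{eq.kubota2} respectively.

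The only technical point, and the sole piece of real work, is the bookkeeping that matches each reassembled derivative to one of the three ingredient terms in the definition of $\mathcal B_{\rho,m}$; this is routine once one splits on whether $|\gamma|=0$ (handled by $\widetilde{\Gamma}^\delta v_0$ or $\widetilde{\Gamma}^\delta v_1$) or $|\gamma|\ge 1$ (in which case one spatial derivative is peeled off to match $\widetilde{\Gamma}^\delta\nabla_x v_0$). I expect no substantive difficulty beyond checking these combinatorics.
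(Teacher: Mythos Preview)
Your proposal is correct and follows essentially the same approach as the paper: the paper's entire proof consists of the single observation that $\Lambda K_0[v_0,v_1]=K_0[\Lambda v_0,\Lambda v_1]$ (by \eqref{eq.commute}) together with a direct citation of Proposition~2.1 in \cite{k93}, which is exactly the reduction you describe. Your additional bookkeeping about matching $\pa_x^\gamma\Lambda^k$ terms to the ingredients of $\mathcal B_{\rho,m}$ is a faithful unpacking of what the paper leaves implicit.
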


For $\kappa\ge 1$ and $\tau\ge 0$, we define
$$
\Psi_\kappa(\tau):=\begin{cases}
        1, & \kappa>1, \\
        \log(e+\tau), & \kappa=1.
       \end{cases}
$$
The following two lemmas are proved for $m=0$ in \cite{DiF03}.
For the general case, see \cite{KP}.
\begin{lemma}
Let $\kappa \ge 1$ and $m\in \N$. Then we have
\begin{eqnarray}\label{eq.decayMGbis}
\sum_{|\delta|\le m}|\Gamma^\delta L_0[g](t,x)|
\lesssim \Psi_{\kappa}(t+|x|)\sum_{|\delta|\le m}\|\langle y\rangle^{1/2}W_{1/2,\kappa}(s,y) \Gamma^\delta g(s,y)\|_{L^\infty_tL^\infty},
\end{eqnarray}
and
\begin{eqnarray}
\label{eq.decayMG}
&&\langle t+|x| \rangle^{1/2}\log^{-1}\left(e+ \frac{\langle t+|x|\rangle}{\langle t-|x|\rangle}\right)
\sum_{|\delta|\le m}|\Gamma^\delta L_0[g](t,x)|\lesssim\nonumber
\\&&\quad
\lesssim \Psi_{\kappa}(t+|x|) \sum_{|\delta|\le m}\|\langle y\rangle^{1/2}W_{1,\kappa}(s,y) \Gamma^\delta g(s,y)\|_{L^\infty_tL^\infty}
\end{eqnarray}
for any $(t,x)\in [0,T)\times \R^2$.
\end{lemma}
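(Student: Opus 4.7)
The plan is to reduce the general-$m$ bounds to the $m=0$ case (established in \cite{DiF03}) by exploiting the commutation properties between $L_0$ and the vector fields in $\Gamma$, using the homogeneous bounds \eqref{eq.kubota}--\eqref{eq.kubota2} to control the resulting commutator errors.

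First I would observe that $\pa_{x_1}$, $\pa_{x_2}$ and $\Lambda$ all commute with $\pa_t^2-\Delta$ by \eqref{eq.commute} and, containing no $\pa_t$ component, preserve the zero Cauchy data of $L_0[g]$. This yields
\[
\pa_{x_i}L_0[g]=L_0[\pa_{x_i}g]\quad(i=1,2),\qquad \Lambda L_0[g]=L_0[\Lambda g].
\]
Consequently, for any multi-index $\delta$ with $\delta_0=0$ one has $\Gamma^\delta L_0[g]=L_0[\widetilde\Gamma^{\widetilde\delta}g]$ with $\widetilde\delta=(\delta_1,\delta_2,\delta_3)$, and the $m=0$ estimates of \cite{DiF03} applied to $L_0[\widetilde\Gamma^{\widetilde\delta}g]$ immediately give both \eqref{eq.decayMGbis} and \eqref{eq.decayMG} in this pure-space case.

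For $\delta_0\ge 1$ I would invoke the identity
\[
\pa_t L_0[g]=L_0[\pa_t g]+K_0[(0,g(0,\cdot))],
\]
verified by observing that both sides solve $(\pa_t^2-\Delta)u=\pa_t g$ with the same Cauchy data $u(0,\cdot)=0$, $\pa_t u(0,\cdot)=g(0,\cdot)$. Iterating this, and using $\pa_t K_0[(0,h)]=K_0[(h,0)]$ together with $\pa_t K_0[(h,0)]=K_0[(0,\Delta h)]$, I would derive a decomposition
\[
\Gamma^\delta L_0[g]=L_0[\Gamma^\delta g]+\sum_{j}K_0[\vec h_{j,\delta}],
\]
where each pair $\vec h_{j,\delta}$ consists of spatial derivatives of $\Gamma^{\delta'}g(0,\cdot)$ with $|\delta'|\le|\delta|-1$. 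The main term is controlled by the previous paragraph (applied to $\Gamma^\delta g$ in place of $g$); each correction $K_0[\vec h_{j,\delta}]$ is bounded using \eqref{eq.kubota} or \eqref{eq.kubota2}, noting that for $\kappa\ge 1$ one has $\jb{y}^{1/2}W_{1/2,\kappa}(0,y)=\jb{y}^{1+\kappa}\ge \jb{y}^{3/2}$ and $\jb{y}^{1/2}W_{1,\kappa}(0,y)=\jb{y}^{3/2+\kappa}\ge \jb{y}^{2+\mu}$ for small $\mu>0$, so the norms $\mathcal B_{3/2,0}[\vec h_{j,\delta}]$ and $\mathcal B_{2+\mu,0}[\vec h_{j,\delta}]$ are dominated by the weighted $L^\infty_tL^\infty$ norms of $\Gamma^{\delta'}g$ appearing on the right-hand sides of \eqref{eq.decayMGbis}--\eqref{eq.decayMG}.

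The main technical burden will be the derivative bookkeeping, namely verifying that the repeated substitutions $\pa_t^2\leftrightarrow\Delta$ inside the $K_0$-terms never introduce more than $|\delta|$ generalized derivatives of $g$, so that the sums on the right close. Once this is arranged, the $t$-$x$ weights $\Psi_\kappa(t+|x|)$ and the gain $\jb{t+|x|}^{1/2}\log^{-1}(e+\jb{t+|x|}/\jb{t-|x|})$ are inherited directly from the $m=0$ bound and from \eqref{eq.kubota}--\eqref{eq.kubota2}, since $\jb{t+|x|}^{-1/2}\log(e+\jb{t+|x|}/\jb{t-|x|})\lesssim \Psi_\kappa(t+|x|)$ for every $\kappa\ge 1$, which absorbs the $K_0$ contributions cleanly into the $L_0$ decay.
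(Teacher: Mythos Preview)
Your reduction is sound. The paper itself does not prove this lemma: it cites \cite{DiF03} for the $m=0$ case and \cite{KP} for general $m$, so there is no in-paper argument to compare against, but your route---commuting the spatial fields $\partial_{x_1},\partial_{x_2},\Lambda$ through $L_0$ directly, then handling $\partial_t$ via $\partial_t L_0[g]=L_0[\partial_t g]+K_0[(0,g(0,\cdot))]$ and iterating with $\partial_t^2\leftrightarrow\Delta$ on the homogeneous pieces---is exactly the standard mechanism and presumably what \cite{KP} does as well. The derivative count closes as you anticipate: writing $\partial_t^k L_0[h]=L_0[\partial_t^k h]+\sum_{j=0}^{k-1}\partial_t^{k-1-j}K_0[(0,\partial_t^j h(0))]$ and using $\partial_t^{2i}K_0[(0,p)]=K_0[(0,\Delta^i p)]$, $\partial_t^{2i+1}K_0[(0,p)]=K_0[(\Delta^i p,0)]$, each $K_0$-datum carries at most $k-1$ derivatives of $h$, hence at most $|\delta|-1$ derivatives of $g$; the extra $\nabla_x$ inside $\mathcal B_{\rho,0}$ brings this to $|\delta|\le m$, so the right-hand sums absorb everything. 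Your weight check at $s=0$ and the absorption $\jb{t+|x|}^{-1/2}\log(e+\jb{t+|x|}/\jb{t-|x|})\lesssim\Psi_\kappa(t+|x|)$ are both correct. One minor clarification: for \eqref{eq.decayMG} it is cleaner to bound the $K_0$ corrections with \eqref{eq.kubota} rather than \eqref{eq.kubota2}, since the left side already carries the factor $\jb{t+|x|}^{1/2}\log^{-1}(\cdots)$ and \eqref{eq.kubota} matches it exactly, requiring only $\mathcal B_{3/2,0}$ (dominated by $\jb{y}^{3/2+\kappa}$).
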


\begin{lemma}
Let $0<\sigma<3/2$, $\kappa>1$, $\mu\ge 0$, $0<\eta<1$ and $m\in \N$. Then, for any $(t,x)\in [0,T)\times \R^2$, one has
\begin{eqnarray}
&&\sum_{|\delta|\le m}|\Gamma^\delta \partial L_0[g](t,x)|
\lesssim
\nonumber\\
&&\quad \lesssim w_{\sigma}(t,x) \Psi_{\mu+1}(t+|x|)
\sum_{|\delta|\le m+1}\|\langle y\rangle^{1/2+\kappa}\langle s+
|y| \rangle^{\sigma+\mu} \Gamma^\delta g(s,y)\|_{L^\infty_tL^\infty},
\label{eq.decayMG2}
\\
&&\sum_{|\delta|\le m}|\Gamma^\delta \partial L_0[g](t,x)|
\lesssim \nonumber\\
&&\quad \lesssim w_{1-\eta}(t,x)\log(e+t+|x|)  %\log(e+t)
\sum_{|\delta|\le m+1}\|\langle y\rangle^{1/2}W_{1,1}(s,y) \Gamma^\delta g(s,y)\|_{L^\infty_tL^\infty}.
\label{eq.decayMG4}
\end{eqnarray}
\end{lemma}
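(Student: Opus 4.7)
\emph{Plan.} Both estimates \eqref{eq.decayMG2}--\eqref{eq.decayMG4} are the $m\ge 1$ extension of the case $m=0$ proved in \cite{DiF03} (with the general case stated in \cite{KP}). I would therefore organize the proof in two stages: first, reduce the full $\Gamma^\delta$ estimate to the zeroth-order one by a commutator argument; second, recall the $m=0$ estimate by appealing to the explicit 2D Duhamel representation.

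\emph{Stage 1 (commutation and reduction).} Since $[\Gamma_i,\partial_t^2-\Delta]=0$ for $i=0,1,2,3$ by \eqref{eq.commute}, we have $(\partial_t^2-\Delta)\Gamma^\delta L_0[g]=\Gamma^\delta g$. Moreover $L_0[g](0,\cdot)=\partial_t L_0[g](0,\cdot)=0$, while $\partial_t^j L_0[g](0,\cdot)$ for $j\ge 2$ is expressible in terms of $\partial_t^{j-2}g(0,\cdot)$ and its spatial derivatives through the equation. By uniqueness this gives
$$
\Gamma^\delta L_0[g] \,=\, L_0[\Gamma^\delta g] \,+\, K_0[\vec h_\delta],
$$
where $\vec h_\delta$ is a linear combination of spatial derivatives of $g(0,\cdot)$ of order at most $|\delta|-2$, and vanishes identically whenever $\Gamma^\delta$ contains no $\partial_t$-factor of order $\ge 2$. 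Applying $\partial$ and using the elementary commutators $[\partial_i,\Gamma_j]=\sum c^k_{ij}\partial_k$ recorded in the introduction, I would rewrite $\Gamma^\delta\partial L_0[g]$ as a linear combination of terms of the form $\partial L_0[\Gamma^{\delta'}g]$ with $|\delta'|\le |\delta|+1$, plus $K_0$-terms already controlled by \eqref{eq.kubota}--\eqref{eq.kubota2} (which give the $\mathcal B_{\cdot,\cdot}$-contributions, harmlessly absorbed in the right-hand sides of \eqref{eq.decayMG2}--\eqref{eq.decayMG4} thanks to the compact-support and smoothness properties of the data). This step alone is what explains the presence of $|\delta|\le m+1$ under the $L^\infty$-norms on the right.

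\emph{Stage 2 (base case $m=0$).} Starting from the 2D Duhamel formula
$$
L_0[g](t,x)\,=\,\frac{1}{2\pi}\int_0^t\int_{|y-x|\le t-s}\frac{g(s,y)}{\sqrt{(t-s)^2-|y-x|^2}}\,dy\,ds,
$$
I would differentiate once in $t$ or $x_j$, pass to polar coordinates $y=x+r\omega$ with $r\in[0,t-s]$, $|\omega|\le 1$, and split the resulting integral according to whether $\min\{\langle y\rangle,\langle s-|y|\rangle\}$ is comparable to $\langle y\rangle$ (the exterior region, responsible for the $\langle x\rangle^{-1/2}\langle t-|x|\rangle^{-\nu}$ part of $w_\nu$) or to $\langle s+|y|\rangle$ (the hyperbolic region, responsible for the $\langle t+|x|\rangle^{-1/2}\langle t-|x|\rangle^{-1/2}$ part). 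On each piece, bounding $g$ pointwise by the $L^\infty$-weight $\langle y\rangle^{1/2+\kappa}\langle s+|y|\rangle^{\sigma+\mu}$ (for \eqref{eq.decayMG2}) or $\langle y\rangle^{1/2}W_{1,1}(s,y)$ (for \eqref{eq.decayMG4}) and integrating against the singular kernel $1/\sqrt{(t-s)^2-|y-x|^2}$ would give, respectively, the weights $w_\sigma(t,x)\Psi_{\mu+1}(t+|x|)$ and $w_{1-\eta}(t,x)\log(e+t+|x|)$.

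\emph{Main obstacle.} The hard part is not the commutation of Stage~1 but the sharp estimation in Stage~2. The failure of Huygens' principle in dimension two means the integration over the entire backward cone produces a logarithmic divergence unless one gives up some decay in $\langle s+|y|\rangle$; that is exactly where the factor $\Psi_{\mu+1}$ (no loss for $\mu>0$, a $\log$ for $\mu=0$) in \eqref{eq.decayMG2} and the $\log(e+t+|x|)$ in \eqref{eq.decayMG4} come from. One must also treat with care the boundary $|y-x|=t-s$ of the forward cone, where the kernel is singular and where the threshold $\sigma<3/2$ (respectively $\eta>0$) is dictated by the integrability of $(1-|\omega|^2)^{-1/2}$ against the chosen weight; this is the step where one loses any hope of reaching $w_{3/2}$ or the borderline $w_1$ without a logarithmic correction.
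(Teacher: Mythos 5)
The paper does not actually prove this lemma: it states it and refers the reader to \cite{DiF03} for the case $m=0$ and to \cite{KP} for the general case. So there is no in-paper proof to compare against, and your proposal can only be judged on its own terms.

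Your two-stage strategy (commutation to reduce to $m=0$, then direct estimation of the Duhamel kernel) is the standard route and is surely close to what \cite{KP} does. However, as written, the sketch has three genuine soft spots. First, the claim that the homogeneous $K_0[\vec h_\delta]$-terms produced by the commutation are ``harmlessly absorbed in the right-hand sides ... thanks to the compact-support ... properties of the data'' is not justified: the lemma makes no compact-support hypothesis on $g$, and to invoke \eqref{eq.kubota2} you would need to control $\mathcal{B}_{2+\mu',\,\cdot}$ of $g(0,\cdot)$ and its derivatives, whereas the weights on the right of \eqref{eq.decayMG2} only give you $\langle y\rangle^{1/2+\kappa+\sigma+\mu}$ at $s=0$, which for $\kappa$ close to $1$, $\sigma$ close to $0$ and $\mu=0$ falls short of the exponent $2$. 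You either have to track the exponents carefully, or avoid $K_0$ altogether by trading $\partial_t^2 L_0[g]$ for $\Delta L_0[g]+g$ via the equation, which is the cleaner reduction. Second, you misattribute the origin of the ``$+1$'' in $|\delta|\le m+1$: the commutation $\Gamma^\delta\partial = \sum c_{\delta'}\partial\Gamma^{\delta'}$ produces $|\delta'|\le|\delta|$, not $|\delta|+1$; the extra derivative enters in Stage~2, because bounding $\partial L_0[\cdot]$ requires one integration by parts in the Duhamel integral and hence one derivative more on $g$ than $L_0[\cdot]$ itself. Third, Stage~2 is entirely schematic: the splitting into regions, the pointwise bound on the singular kernel after the integration by parts, and the way the thresholds $\sigma<3/2$ and $\eta>0$ enter are precisely where the real work lies, and nothing in your proposal supplies it. If you had access to \cite{DiF03} for $m=0$ and then repaired the two points above for the reduction, the argument would be complete, but as it stands the proposal is a plausible plan rather than a proof.
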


\subsection{The local energy decay estimates}

%%%%%%%%%%%%%%%%%%%%%%%%%%%%%%%%%%%%%%%%%%%%%%%%%%%%%%%
We come back to the linear problem \eqref{eq.PMfT}.
Let $X_a(T)$ be the set of all $({u}_0, {u}_1,f)\in X(T)$ such that
\begin{align}\label{eq.Xa}
& {u}_0(x)={u}_1(x)=0 \text{ for } |x|\ge a, \\
& f(t,x)=0  \text{ for } |x|\ge a, \; t\in [0,T).\label{eq.Xaa}
\end{align}

The following local energy decay will be used in the proof of the
pointwise estimate. 

\begin{lemma}\label{LocalEnergyDecay}
Assume that ${\mathcal O}$ is convex.
Let $a, b>1$, $\gamma\in (0, 1]$
 and $m \in {\mathbb N}$.
If $\Xi=({u}_0, {u}_1, f) \in X_{a}(T)$, then for any $t\in [0,T)$ one has
\begin{eqnarray}
&&  \sum_{|\alpha| \le m} \jb{t}^\gamma \| \partial^\alpha
 S[\Xi](t)\|_{L^2(\Omega_b)}\lesssim
\nonumber\\
&&\quad \lesssim \| {u}_0 \|_{{H}^{m}(\Omega)}
 +\| {u}_1 \|_{{H}^{m-1}(\Omega)}
 +\log(e+t)\sum_{|\alpha|\le m-1}\| \jb{s}^{\gamma} (\pa^\alpha f)(s,y) \|_{L^\infty_t L^2_\Omega}.
\label{eq.LE}
\end{eqnarray}
\end{lemma}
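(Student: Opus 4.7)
The plan is to split $S[\Xi]=K[\vec u_0]+L[f]$ by linearity and handle the two contributions separately. For the homogeneous piece I would invoke the Secchi--Shibata local energy decay for the $2$D exterior Neumann problem on a convex obstacle: for Cauchy data supported in $\overline{\Omega_a}$ and satisfying the infinite-order compatibility, it yields the pointwise-in-time bound
\[
\sum_{|\alpha|\le m}\|\partial^\alpha K[\vec u_0](t)\|_{L^2(\Omega_b)}\lesssim \jb{t}^{-1}\bigl(\|u_0\|_{H^m(\Omega)}+\|u_1\|_{H^{m-1}(\Omega)}\bigr),
\]
which already beats any $\jb{t}^{-\gamma}$ with $\gamma\in(0,1]$. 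This is where the convexity of $\mathcal O$ is used, as highlighted in the first remark after Theorem~\ref{thm.mainsemi}.

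For the inhomogeneous piece I would apply Duhamel's formula
\[
L[f](t,\cdot)=\int_0^t V(t-\tau)\bigl(f(\tau,\cdot)\bigr)\,d\tau,
\]
where $V(s)(g)$ denotes the homogeneous Neumann evolution with Cauchy data $(0,g)$. Since $f(\tau,\cdot)$ is supported in $\overline{\Omega_a}$ by \eqref{eq.Xaa}, the homogeneous bound above applies uniformly in $\tau$ to $V(t-\tau)(f(\tau,\cdot))$, yielding
\[
\sum_{|\alpha|\le m}\|\partial^\alpha L[f](t)\|_{L^2(\Omega_b)}\lesssim \int_0^t \jb{t-\tau}^{-1}\sum_{|\alpha|\le m-1}\|\partial^\alpha f(\tau,\cdot)\|_{L^2(\Omega)}\,d\tau.
\]
Factoring $\jb{\tau}^{-\gamma}$ out and invoking the elementary bound
\[
\int_0^t \jb{t-\tau}^{-1}\jb{\tau}^{-\gamma}\,d\tau \lesssim \jb{t}^{-\gamma}\log(e+t),\qquad \gamma\in(0,1],
\]
gives the inhomogeneous contribution, with the $\log(e+t)$ factor arising precisely from this time integration. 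Summing the two contributions yields \eqref{eq.LE}.

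The main obstacle is the first step: the Secchi--Shibata local energy decay must be available for all derivatives $\partial^\alpha S[\Xi]$ with $|\alpha|\le m$, but spatial derivatives do not preserve the Neumann boundary condition. The way around this is to apply the estimate only to the time derivatives $\partial_t^k$ (which commute with $\partial_t^2-\Delta$ and preserve the boundary condition), observing that the iterated Cauchy data $v_{2j}=\Delta^j u_0$, $v_{2j+1}=\Delta^j u_1$ remain supported in $\overline{\Omega_a}$ and inherit the compatibility of infinite order from $\Xi\in X_a(T)$, and then to recover the mixed spatial derivatives through the elliptic estimate of Lemma~\ref{elliptic2}, trading $\partial_x^\alpha$ with $|\alpha|\ge 2$ for $\Delta=\partial_t^2$ plus local lower-order terms, both already controlled. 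A secondary, milder obstacle is to ensure that the Duhamel integration loses at most one logarithmic factor uniformly in $\gamma\in(0,1]$, which is exactly the content of the elementary integral bound above.
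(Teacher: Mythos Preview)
Your overall strategy is the paper's: apply the Secchi--Shibata $\jb{t}^{-1}$ local energy decay to time derivatives, use Duhamel, and recover spatial derivatives via a local elliptic estimate, with the $\log(e+t)$ arising from $\int_0^t\jb{t-\tau}^{-1}\jb{\tau}^{-\gamma}\,d\tau\lesssim\jb{t}^{-\gamma}\log(e+t)$.

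There is, however, a real issue with splitting $S[\Xi]=K[\vec u_0]+L[f]$ \emph{before} differentiating in time. The infinite-order compatibility in $X_a(T)$ is a condition on the triple $(u_0,u_1,f)$: the iterated data are $u_j=\Delta u_{j-2}+(\partial_t^{j-2}f)(0,\cdot)$, so $\partial_\nu u_2=0$ only gives $\partial_\nu\Delta u_0=-\partial_\nu f(0,\cdot)$, which need not vanish. Hence your homogeneous iterates $v_{2j}=\Delta^j u_0$ generally fail $\partial_\nu v_{2j}=0$, the pair $(u_0,u_1,0)$ does not satisfy higher-order compatibility, and $K[\vec u_0]$ alone lacks the time regularity needed to apply the basic decay estimate to $\partial_t^k K[\vec u_0]$ for $k\ge 2$. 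The same obstruction hits $\partial_t^k$ of the Duhamel integrand $V(t-\tau)(f(\tau))$. The paper avoids this by not splitting: it applies Duhamel directly to $\partial_t^j S[\Xi]$,
\[
\partial_t^j S[\Xi](t)=K[(u_j,u_{j+1})](t)+\int_0^t K\bigl[(0,\partial_t^j f(s))\bigr](t-s)\,ds,
\]
so that the first term carries data $(u_j,u_{j+1})$ with $\partial_\nu u_j=0$ by hypothesis, and the second has data $(0,\partial_t^j f(s))$ with trivially vanishing normal derivative; only order-$0$ compatibility is then needed for each application of the basic estimate. With this correction your argument goes through. One should also use the localized elliptic variant $\|\varphi\|_{H^m(\Omega_b)}\lesssim\|\Delta\varphi\|_{H^{m-2}(\Omega_{b'})}+\|\varphi\|_{H^{m-1}(\Omega_{b'})}$ for $b<b'$ (obtained from Lemma~\ref{elliptic2} by cutting off) rather than Lemma~\ref{elliptic2} itself, since the target norm is on $\Omega_b$ and only local control of $\Delta S[\Xi]=\partial_t^2 S[\Xi]-f$ is available.
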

\noindent{\it Proof.}
For $a,b>1$, it is known that there exists a
positive constant $C=C(a,b)$ such that
\begin{align}
\int_{\Omega_b}(|\partial_t K[\vec \phi_0](t,x)|^2+
|\nabla_x K[\vec \phi_0](t,x)|^2+&
|K[\vec \phi_0](t,x)|^2) \,dx \lesssim \nonumber\\
&
\lesssim \jb{t}^{-2}\left(\|\phi_0\|^2_{H^1(\Omega)}+ \|\phi_1\|^2_{L^2(\Omega)}\right)
\label{obstacle}
\end{align}
for any $\vec \phi_0=(\phi_0, \phi_1)\in H^{2}(\Omega)\times H^1(\Omega)$
satisfying $\phi_0(x)=\phi_1(x)\equiv 0$ for $|x|\ge a$ and satisfying also the compatibility condition of order $0$,
that is to say, $\pa_\nu \phi_0(x)=0$ for $x\in \pa \Omega$
(see for instance Lemma~2.1 of \cite{SeSh03}; see also Morawetz \cite{Mor75} and Vainberg \cite{Vai75}).

Now let $({u}_0, {u}_1, f)\in X_{a}(T)$ with some $a>1$.
Let $u_j$ for $j\ge 2$ be defined as in Definition~\ref{def.complinear}.
Then, by Duhamel's principle, it follows that
\begin{align}
& \partial_t^j S[({u}_0, {u}_1,f)](t,x)
\nonumber\\
& \qquad =K[({u}_j, {u}_{j+1})](t,x)+
\int_0^t K\bigl[\bigl(0,(\partial_t^j f)(s) \bigr) \bigr](t-s,x) ds
\label{DP}
\end{align}
for any nonnegative integer $j\in \N^*$ and any $(t,x) \in [0,T) \times \Omega$.
Observe that $(u_j, u_{j+1}, 0)$
satisfies the compatibility condition of order $0$, because $(u_0, u_1, f)\in X(T)$ implies
$\pa_\nu u_j=0$ on $\pa \Omega$; the compatibility condition of order $0$ is
also trivially satisfied for $\bigl(0,(\pa_s^j f)(s),0\bigr)$ for all $s\ge 0$.

\noindent
Therefore, by (\ref{obstacle}) we have
\begin{eqnarray*}
 \sum_{|\alpha|\le 1}
 \|\partial^\alpha K[{u}_j, {u}_{j+1}](t)\|_{L^2({\Omega_b})}
& \lesssim &  \jb{t}^{-1} \left(\|{u}_j\|_{H^1(\Omega)}+\|{u}_{j+1}\|_{L^2(\Omega)}\right)
\\
& \lesssim & \jb{t}^{-1} \bigl(\|{u}_0\|_{H^{j+1}(\Omega)}+\|{u}_1\|_{H^{j}(\Omega)}
+\sum_{k=0}^{j-1} \|(\partial_t^k f)(0)\|_{L^2(\Omega)}\bigr)
\end{eqnarray*}
and
\begin{eqnarray*}
 \sum_{|\alpha|\le 1}\int_0^t \|\partial^\alpha K[(0,(\partial_t^j f)(s))](t-s)\|_{L^2({\Omega_b})} ds
 &\lesssim &\int_0^t  \jb{t-s}^{-1}
\,\|(\partial_t^j f)(s)\|_{L^2(\Omega)} ds
\\
\qquad &\lesssim &\jb{t}^{-\gamma} {\rm log}(e+t) \sup_{0\le s \le t} \jb{s}^\gamma
  \|(\partial_t^j f)(s)\|_{L^2(\Omega)}
\end{eqnarray*}
for any $\gamma \in (0,1]$.
In conclusion for any  $j\in\N^*$, we have
\begin{eqnarray}
&& \sum_{|\alpha|\le 1}\| \partial^\alpha \partial^j_{t} S[({u}_0, {u}_1, f)](t)\|_{L^2(\Omega_b)}\lesssim
\nonumber\\
&& \quad \lesssim \jb{t}^{-\gamma} \bigl(
\|{u}_0\|_{H^{j+1}(\Omega)}+\|{u}_1\|_{H^{j}(\Omega)}
+\sum_{k=0}^j\log(e+t) \sup_{0\le s \le t} \jb{s}^\gamma\|(\partial_t^k f)(s)\|_{L^2(\Omega)}\bigr).
\label{LE1}
\end{eqnarray}

In order to evaluate $\partial^\alpha S[\Xi]$ for $2\le |\alpha| \le m$,
we have only to combine (\ref{LE1}) with a variant of (\ref{eq.ap1})\,:
\begin{equation}\label{LE2}
 \|\varphi\|_{H^m(\Omega_b)} \lesssim \|\Delta_x \varphi\|_{H^{m-2}(\Omega_{b^\prime})}+\|\varphi\|_{H^{m-1}(\Omega_{b^\prime})},
\end{equation}
where $1<b<b^\prime$ and $\varphi \in H^m(\Omega)$ with $m \ge 2$;
we can easily obtain \eqref{LE2} from \eqref{eq.ap1} by cutting off $\varphi$ for $|x|\ge b'$.

In order to complete the proof, one has to apply this inequality recalling the equation $\Delta S[\Xi]=\partial_t^2 S[\Xi]-f$.
Invoking \eqref{LE1}, we finally get the basic estimate \eqref{obstacle}.
\hfill$\qed$

\subsection{Proof of Theorem~\ref{main}}
The following lemma is the main tool for the proof of Theorem~\ref{main}.
\begin{lemma}\label{KataLem}
Let ${\mathcal O}$ be a convex set.
Let $a,b>1$, $0<\rho\le 1$, $m\in \N^*$ and $\kappa \ge 1$.

\noindent
{\rm (i)} Suppose that $\chi$ is a smooth function on $\R^2$ satisfying ${\rm supp}\, \chi \subset  B_b$.
If $\Xi=({u}_0, {u}_1,f) \in X_{a}(T)$,
then
\begin{eqnarray}
&& \langle t \rangle^\rho
   \sum_{|\delta|\le m}|\Gamma^\delta (\chi S[ \Xi  ])(t,x)|\lesssim
\nonumber\\
&&\lesssim \| {u}_0 \|_{{H}^{m+2}(\Omega)}+\| {u}_1 \|_{{H}^{m+1}(\Omega)}
 + \log(e+t) \sum_{|\beta|\le m+1} \|\langle s\rangle^{\rho} \partial^\beta f(s,y)\|_{L^\infty_tL^\infty({\Omega_a})}
\label{KataL01}
\end{eqnarray}
for $(t,x)\in[0, T)\times \overline{\Omega}$.

\medskip

\noindent
{\rm (ii)}
Let $g\in  \mathcal C^{\infty}([0,T)\times \R^2)$  such that
$\supp g (t,\cdot)\subset \overline{B_a\setminus B_1}$
for any $t\in [0,T)$.
Then
\begin{eqnarray}
\label{KataL02}
 \sum_{|\delta|\le m}|\Gamma^\delta
L_0[g](t,x)| %\lesssim \\ &&
\lesssim
 \sum_{|\beta|\le m}  \|\langle s\rangle^{1/2} \partial^\beta g(s,y)\|_{L^\infty_tL^\infty({\Omega_a})},
\end{eqnarray}
and for any $0\le \eta<\rho$ we have
\begin{align}
\label{KataL03}
& w^{-1}_{\rho-\eta}(t,x)
\sum_{|\delta|\le m}
|\Gamma^\delta \partial
L_0[g](t,x)|
\lesssim
  \Psi_{\eta+1}(t+|x|)
\sum_{|\beta|\le m+1} \|\langle s\rangle^{\rho} \partial^\beta g(s,y)\|_{L^\infty_tL^\infty({\Omega_a})}.
\end{align}
for $(t,x)\in [0,T)\times \overline{\Omega}$.

\medskip

\noindent
{\rm (iii)} Let $(v_0,v_1,g)\in \mathcal C^{\infty}(\R^2)\times \mathcal C^{\infty}(\R^2)\times \mathcal C^{\infty}([0,T)\times \R^2)$.
If $v_0=v_1=g(t,\cdot)=0$ for any $x\in B_1$ and $t\in [0,T)$, then
\begin{eqnarray}
&& \langle t \rangle^{1/2} \sum_{|\beta|\le m}
   |\Gamma^\beta S_0[v_0,v_1, g ](t,x)|
\lesssim
\nonumber\\
&&\quad \lesssim {\mathcal A}_{3/2, m}[v_0,v_1]
 +\Psi_\kappa(t+|x|) \sum_{|\beta|\le m} \|\langle y\rangle^{1/2} W_{1,\kappa}(s,y)\Gamma^\beta g(s,y)\|_{L^\infty_tL^\infty({\Omega})}
\label{KataL04}
\end{eqnarray}
for $(t,x)\in [0,T)\times {\overline{\Omega}_b}$.
\end{lemma}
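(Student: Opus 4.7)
The lemma is proved by combining three separate bodies of known results: the local energy decay (Lemma~\ref{LocalEnergyDecay}) for part~(i), the free-space decay estimates (\ref{eq.kubota})--(\ref{eq.decayMG4}) for part~(ii), and the restriction of those free-space estimates to a bounded spatial region for part~(iii). I would treat the three parts independently, in the stated order.

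For part~(i), since $\chi$ is supported in $B_b$, on that set $\Lambda=x_1\partial_2-x_2\partial_1$ has coefficients bounded by $b$, so applying the Leibniz rule and replacing $\Lambda$ by bounded-coefficient combinations of $\partial_x$ yields
\begin{equation*}
  \sum_{|\delta|\le m}|\Gamma^\delta(\chi S[\Xi])(t,x)|\lesssim \sum_{|\alpha|\le m}|\partial^\alpha S[\Xi](t,x)|\quad (x\in \Omega_b),
\end{equation*}
with constants depending on $\chi$ and $b$. The 2D Sobolev embedding on the bounded Lipschitz domain $\Omega_b$ gives $\|\cdot\|_{L^\infty(\Omega_b)}\lesssim \|\cdot\|_{H^2(\Omega_b)}$, so the right-hand side is bounded pointwise by $\sum_{|\alpha|\le m+2}\|\partial^\alpha S[\Xi](t)\|_{L^2(\Omega_b)}$. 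Now Lemma~\ref{LocalEnergyDecay} with $\gamma=\rho$ and order $m+2$ produces the data norms $\|u_0\|_{H^{m+2}}$, $\|u_1\|_{H^{m+1}}$ and the source contribution $\log(e+t)\sum_{|\alpha|\le m+1}\|\jb{s}^{\rho}\partial^\alpha f\|_{L^\infty_tL^2_\Omega}$. Finally, $\mathrm{supp}\,f(t,\cdot)\subset\overline{\Omega_a}$ together with $|\Omega_a|<\infty$ gives $\|\partial^\alpha f(s)\|_{L^2_\Omega}\lesssim \|\partial^\alpha f(s)\|_{L^\infty(\Omega_a)}$, converting the $L^2$ norm into the $L^\infty$ norm appearing in (\ref{KataL01}).

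For part~(ii), I would exploit the support assumption $\mathrm{supp}\,g(s,\cdot)\subset\overline{B_a\setminus B_1}$ to bound all $\jb{y}$-weights by constants. For (\ref{KataL02}), apply (\ref{eq.decayMGbis}) with a fixed $\kappa>1$ so that $\Psi_\kappa\equiv 1$; on the support of $g$ one has $\jb{y}\lesssim 1$, hence $\min\{\jb{y},\jb{s-|y|}\}\lesssim 1$ and $\jb{y}^{1/2}W_{1/2,\kappa}(s,y)\lesssim\jb{s}^{1/2}$. Since $\Gamma^\beta g$ reduces to $\partial^\beta g$ (with bounded coefficients) on the bounded support, (\ref{KataL02}) follows. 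For (\ref{KataL03}), apply (\ref{eq.decayMG2}) with $\sigma=\rho-\eta$, $\mu=\eta$ and any $\kappa>1$; the conditions $0<\sigma<3/2$ and $\mu\ge 0$ hold because $0<\eta<\rho\le 1$, and on the support of $g$ one has $\jb{y}^{1/2+\kappa}\jb{s+|y|}^{\sigma+\mu}\lesssim \jb{s}^{\rho}$.

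For part~(iii), I would decompose $S_0[v_0,v_1,g]=K_0[v_0,v_1]+L_0[g]$ and apply (\ref{eq.kubota}) to the first summand and (\ref{eq.decayMG}) to the second. The crucial observation is that on $\overline{\Omega}_b$ one has $|x|\le b$, hence $\jb{t+|x|}\simeq\jb{t}\simeq\jb{t-|x|}$ with constants depending on $b$, which makes the correction $\log(e+\jb{t+|x|}/\jb{t-|x|})$ bounded and turns the prefactor $\jb{t+|x|}^{1/2}$ into $\jb{t}^{1/2}$. Moreover, since $v_0,v_1$ vanish on $B_1\supset\overline{\mathcal O}$, one has $\mathcal B_{3/2,m}[v_0,v_1]=\mathcal A_{3/2,m}[v_0,v_1]$, and the $L^\infty$ norms on $\R^2$ and on $\Omega$ agree for $g$ as well (since $g(t,\cdot)$ vanishes on $B_1$). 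Combining these, (\ref{KataL04}) follows. The only genuine difficulty is part~(i), which depends on the Neumann local energy decay estimate and hence on the convexity of $\mathcal O$; the remainder of the proof is careful bookkeeping of support conditions, derivative counts, and weight functions.
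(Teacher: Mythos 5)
Your proof is correct and follows essentially the same route as the paper's own proof: part (i) reduces $\Gamma^\delta$ to $\partial^\alpha$ on the bounded set, applies the Sobolev embedding on $\Omega_b$, and invokes the local energy decay Lemma~\ref{LocalEnergyDecay}; parts (ii) and (iii) are read off from (\ref{eq.decayMGbis}), (\ref{eq.decayMG2}), (\ref{eq.kubota}) and (\ref{eq.decayMG}), bounding the weights and converting norms via the bounded support assumptions exactly as the paper does. The only cosmetic difference is that you spell out the weight bookkeeping in (ii), which the paper leaves implicit.
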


\noindent{\it Proof.}\ \
First we note that for any smooth function $h:[0,T)\times \overline{\Omega}\to \R$
such that ${\rm supp}\, h(t, \cdot)\subset B_R$ for any $t\in [0,T)$ and suitable $R>1$,
 it holds that
\begin{equation}
\sum_{|\beta|\le m}|\Gamma^\beta h(t,x)|\lesssim \sum_{|\beta|\le m} |\partial^\beta h(t,x)|.
\label{KataM01}
\end{equation}
Clearly the same estimate holds for $h:[0,T)\times \R^2\to \R$.

We start with the proof of \eqref{KataL01}.
Let $\Xi \in X_{a}(T)$ and $0<\rho\le 1$.
For $(t,x)\in [0, T)\times \overline{\Omega}$,
combining \eqref{KataM01}
with the standard Sobolev inequality and then applying the local energy decay \eqref{eq.LE}, we get
\begin{eqnarray*}
&&\langle t \rangle^\rho
\sum_{|\beta|\le m}|\Gamma^{\beta}(\chi S[ \Xi  ])  (t,x)|
  \lesssim \langle t\rangle^\rho \!\!\! \sum_{|\beta|\le m+2}
        \|{\partial^\beta S[ \Xi ](t)}\|_{L^2(\Omega_b)}
\\
&& \quad \lesssim
\| {u}_0 \|_{{H}^{m+2}(\Omega)}+\| {u}_1 \|_{{H}^{m+1}(\Omega)}
 +\log(e+t) \sum_{|\beta|\le m+1} \| \jb{s}^{\rho} \pa^\beta f(s,y) \|_{L^\infty_t L^2_\Omega}.
\end{eqnarray*}
Since $\text{supp}\,f(t,\cdot) \subset \overline{\Omega}_{a}$
implies $\|{\partial^\beta f(s)}\|_{L^2(\Omega)}\lesssim \|{\partial^\beta f(s)}\|_{L^\infty({\Omega}_a)}$,
we obtain \eqref{KataL01}.

Next we prove \eqref{KataL02} by the aid of the decay estimates for the linear Cauchy problem.
By
\eqref{eq.decayMGbis} for some $\kappa>1$, we find
\begin{eqnarray*}
 \sum_{|\delta|\le m}|\Gamma^\delta %S_0[(0,0, g)]
 L_0[g](t,x)|
\lesssim
 \sum_{|\delta|\le m} \|\langle y\rangle^{1/2}W_{1/2,\kappa}(s,y)\Gamma^\delta g(s,y)\|_{L^\infty_tL^\infty}.
\end{eqnarray*}
Using
the assumption
${\rm supp}\, g(t,\cdot) \subset \overline{B_a\setminus B_1}\subset {\overline{\Omega}_{a}}$, we gain \eqref{KataL02}.

Similarly, if we use
\eqref{eq.decayMG2} (with $\sigma $ being replaced by $\rho-\eta$ and $\mu$ by $\eta$), instead of \eqref{eq.decayMGbis}, then we get \eqref{KataL03}.

Finally we prove \eqref{KataL04} by using \eqref{eq.kubota} and \eqref{eq.decayMG}.
It follows that
\begin{eqnarray*}
&& \langle t+|x| \rangle^{1/2} \log\left(e+\frac{\langle t+|x|\rangle }{\langle t-|x|\rangle }\right)
\sum_{|\beta|\le m} |\Gamma^\beta S_0[\vec{v}_0, g ](t,x)|\lesssim
\\&& \quad
\lesssim {\mathcal B}_{3/2, m}[\vec{v}_0]
+ \Psi_\kappa(t+|x|) \sum_{|\beta|\le m} \|\langle y\rangle^{1/2}W_{1,\kappa}(s,y) \Gamma^\beta g(s,y)\|_{L^\infty_tL^\infty}
\end{eqnarray*}
for $(t,x)\in [0, T)\times \R^2$.
Observe that the logarithmic term on the left-hand side is equivalent to a constant
when $x \in \overline{\Omega_b}$.
Thus we get \eqref{KataL04}, because our assumption ensures that support of data and ${\rm supp}\,g(t,\cdot)$ are contained in $\Omega$.
This completes the proof.\hfill$\qed$

\bigskip
Now we are in a position to prove Theorem~\ref{main}.
\begin{proof}[Proof of Theorem~$\ref{main}$]
%%%%%%%%%%%%%%%%%%%%%%%%%%%%%

%\begin{corollary}
%Proposition \ref{prop.pweIII}, Proposition \ref{prop.pwe1} hold
%\end{corollary}

%\begin{proof}
%The statement (i) directly implies Proposition \ref{prop.pwe1}.
%The statements (ii) and (iii)  directly implies Proposition \ref{prop.pwe1}.
%\end{proof}

%\noindent{\it Proof}  of Theorem \ref{main}.}\ \
%\noindent{\it Proof.}\ \
According to Lemma \ref{lemma.decomposition} with $a=1$, we can write
\begin{equation}\label{decomposition}
S[{\Xi}](t,x)=\psi_1(x) S_0[\psi_2 \Xi](t,x)
{}+\sum_{i=1}^4 S_i[\Xi](t,x)
\end{equation}
for $(t,x)\in [0,T)\times {\overline{\Omega}}$,
where $\psi_a$ is defined by (\ref{eq.psia}) and
$S_i[\Xi]$ for $1\le i\le 4$ are defined by \eqref{eq.S1}--\eqref{eq.S4} with $a=1$.
It is easy to check that
\begin{equation}\label{eq.compsi}
[\psi_a,-\Delta]h(t,x)=
   h(t,x) \Delta \psi_a(x)+2\nabla_{\!x}\,h(t,x) \cdot \nabla_{\!x}\, \psi_a(x)
\end{equation}
for $(t, x) \in [0,T)\times {\overline{\Omega}}$, $a \ge 1$ and any smooth function $h$.
Note that this identity implies
\begin{equation}\label{eq.comX}
(0,0, [\psi_a, -\Delta]h)\in X_{a+1}(T)
\end{equation}
because
${\rm supp}\, \nabla_x \psi_a\cup {\rm supp}\, \Delta \psi_a\subset
\overline{B_{a+1}\setminus B_a}$.
%\footnote{I erase $a \ge 1$ and any smooth function $h$ since yet written}

\smallskip
%%%%%%%%%%%%%%%%%%%%%%%
First we prove \eqref{ba3}.
Applying \eqref{eq.kubota} and \eqref{eq.decayMG}, we have
\begin{eqnarray*}
&& \jb{t+|x|}^{1/2}
\log^{-1}\left(e+\frac{\langle t+|x|\rangle}{\langle t-|x|\rangle}\right)
\sum_{|\delta|\le k}\left|\Gamma^\delta S_0[\psi_2\Xi](t,x)\right|\lesssim
\\
&& \quad \lesssim {\mathcal B}_{3/2,k}[\psi_2\vec{u}_0]
 + \sum_{|\delta|\le k}\|\langle y\rangle^{1/2}W_{1,1+\mu}(s,y)\Gamma^\delta (\psi_2 f)(s,y)\|_{L^\infty_tL^\infty}
\\
&& \quad \lesssim  {\mathcal A}_{3/2,k}[\vec{{u}}_0]
 +\sum_{|\delta|\le k} \| |y|^{1/2} W_{1,1+\mu}(s,y) \Gamma^\delta f(s,y)\|_{L^\infty_tL^\infty_{\Omega}},
\end{eqnarray*}
so that
\begin{align}
& \jb{t+|x|}^{1/2}\log^{-1}\left(e+\frac{\jb{t+|x|}}{\jb{t-|x|}}\right)
\sum_{|\delta|\le k}\left| \Gamma^{\delta}\bigl(\psi_1(x) S_0[\psi_2\Xi](t,x)\bigr)\right|
 \lesssim \nonumber\\
& \qquad\qquad \lesssim
{\mathcal A}_{3/2,k}[\vec{u}_0]
 + \sum_{|\delta|\le k}\| |y|^{1/2} W_{1,1+\mu}(s,y) \Gamma^\delta f(s,y)\|_{L^\infty_tL^\infty_{\Omega}}.
  \label{eq.finalS0}
\end{align}

Now we write
$$
S_1[\Xi]=(1-\psi_2)L[[\psi_1,-\Delta]K_0[\psi_2 \vec{u}_0]]+(1-\psi_2)L[[\psi_1,-\Delta]L_0[\psi_2 f]]=: S_{1,1}[\Xi]+S_{1,2}[\Xi].
$$
We can apply \eqref{KataL01} to estimate $S_{1,2}[\Xi]$,
because we have $L[h]=S[0,0,h]$ and ${\rm supp}(1-\psi_2)\subset B_3$ and
because \eqref{eq.comX} guarantees $(0,0,[\psi_1,-\Delta]L_0[\psi_2f]) \in X_{2}$.
Therefore we get
\begin{eqnarray*}
\langle t \rangle^{1/2}\sum_{|\delta|\le k} |\Gamma^\delta S_{1,2}[\Xi](t,x)|&\lesssim&
\log(e+t)\sum_{|\beta|\le k+ 1}
 \bigl\|\langle s\rangle^{1/2} \partial^\beta \bigl([\psi_1,-\Delta]L_0[\psi_2 f] \bigr)(s,x)\bigr\|_{L^\infty_tL^\infty(\Omega_2)}\\
&\lesssim &
\log(e+t)\sum_{|\beta|\le k+2}
 \|\langle s\rangle^{1/2} \partial^\beta L_0[\psi_2 f](s,x)\|_{L^\infty_tL^\infty(\Omega_2)},
\end{eqnarray*}
where we have used \eqref{eq.compsi} to obtain the second line.
Recalling that $L_0[h]=S_0[0,0,h]$ and noting that $\psi_2 f(t,x)=0$ if $|x|\le 2$, we can use \eqref{KataL04} to obtain
\begin{equation}\label{eq.stimaS12}
\langle t \rangle^{1/2} \sum_{|\delta|\le k}|\Gamma^\delta S_{1,2}[\Xi](t,x)| \lesssim \log(e+t)
 \sum_{|\beta|\le k+2}\| |y|^{1/2} W_{1,1+\mu}(s,y)\Gamma^\beta f(s,y)\|_{L^\infty_tL^\infty_{\Omega}}
\end{equation}
for $(t,x)\in [0,T)\times {\overline{\Omega}}$.

In order to estimate $S_{1,1}[\Xi]$, we combine the Sobolev embedding and
the local energy decay estimate \eqref{eq.LE} with $\gamma=1$. Then we get
\begin{eqnarray*}
\sum_{|\delta|\le k}|\Gamma^\delta S_{1,1}[\Xi](t,x)|
&\lesssim&
\|(1-\psi_2) L[[\psi_1,-\Delta]K_0[\psi_2 \vec{u}_0]](t,\cdot)\|_{H^{2+k}(\Omega)}\\
&\lesssim&
\|S[0,0,[\psi_1,-\Delta]K_0[\psi_2 \vec{u}_0]](t,\cdot)\|_{H^{2+k}(\Omega_3)}
\\
&\lesssim& \langle t \rangle^{-1} \log(e+t)
\sum_{|\delta|\le k+1}
\|\langle s\rangle\pa^\delta\bigl( [\psi_1,-\Delta] K_0[\psi_2 \vec{u}_0]\bigr)(s,y) \|_{L^\infty_{t} L^2_{\Omega} }
\\
&\lesssim&
\langle t \rangle^{-1} \log(e+t)\sum_{|\beta|\le k+2}\| \langle s\rangle \partial^\beta K_0 [\psi_2
\vec{u}_0](s,y) \|_{L^\infty_{t} L^\infty(\Omega_2)}.
\end{eqnarray*}
Then we use \eqref{eq.kubota2}; recalling that we are in a bounded $y$-domain, for any $\mu>0$ we get
\begin{equation}\label{eq.stimaS11}
\jb{t}^{1/2} \jb{t+|x|}^{1/2} \log^{-1} (e+t)
 \sum_{|\delta|\le k}|\Gamma^\delta S_{1,1}[\Xi](t,x)|\lesssim \mathcal B_{2+\mu,2+k}[\psi_2\vec{u}_0]
\lesssim  \mathcal A_{2+\mu,2+k}[\vec{u}_0]
\end{equation}
for any $(t,x)\in [0,T)\times {\overline{\Omega}}$.

Now we proceed estimating $S_3[\Xi]$. Because $(1-\psi_2)\Xi\in X_{3}(T)$
for any $\Xi\in X(T)$, taking $\rho=1-\mu$ in
\eqref{KataL01} we get
\begin{eqnarray}
&& \langle t\rangle^{1/2} \sum_{|\delta|\le k}|\Gamma^\delta  S_3[\Xi](t,x)|\lesssim
\label{eq.stimaS3}\\
&&\quad  \lesssim \langle t\rangle^{-1/2+\mu}
\Big(
\|{u}_0\|_{H^{k+2}(\Omega_{3})}+\|{u}_1\|_{H^{k+1}(\Omega_{3})}+
\log(e+t) \sum_{|\beta|\le k+1} \|\langle s\rangle^{1-\mu} \partial^\beta f(s,y)\|_{L^\infty_tL^\infty(\Omega_3)}
 \Big)
\nonumber
\end{eqnarray}
for $(t,x)\in [0,T)\times {\overline{\Omega}}$.

By using the trivial inequality $\langle s\rangle^{1-\mu}\lesssim |y|^{1/2} W_{1, 1}(s,y)$ in $[0,T)\times\Omega_3$, from
\eqref{eq.stimaS12}, \eqref{eq.stimaS11} and \eqref{eq.stimaS3} we can conclude that
\begin{align}
&\langle t\rangle^{1/2}\sum_{|\delta|\le k}|\Gamma^\delta S_1[\Xi]|+\langle t\rangle^{1/2}\sum_{|\delta|\le k}|\Gamma^\delta S_3[\Xi]|\lesssim
\nonumber\\
& \lesssim
\jb{t}^{-(1/2)+\mu}
 \mathcal A_{2+\mu,2+k}[\vec{u}_0]+
\log(e+t)\sum_{|\beta|\le 2+k}\||y|^{1/2}W_{1,1+\mu}(s,y)\Gamma^\beta f(s,y)\|_{L^\infty_tL^\infty_{\Omega}}. \label{eq.finalS1S3log}
\end{align}

Finally we consider the terms $S_2[\Xi]$, $S_4[\Xi]$. Let us set $g_j[\Xi]=(\partial_t^2-\Delta) S_j[\Xi]$ for $j=2, 4$.
Recalling the definition of $L_0$, we  find
\begin{eqnarray*}
g_2[\Xi]&=& -[\psi_2,-\Delta]  L\bigl[\,[\psi_1,-\Delta]
             S_0[\psi_2 \Xi]\bigr];\\
g_4[\Xi]&=& -[\psi_3,-\Delta] S[(1-\psi_2)\Xi].
\end{eqnarray*}
Having in mind \eqref{eq.compsi} we can say that $g_2$ and $g_4$ have the same structures as $S_1$ and $S_3$,
but they contain one more derivative.
Therefore, arguing similarly to the derivation of \eqref{eq.finalS1S3log}, we arrive at
\begin{align}
&\langle t\rangle^{1/2}\sum_{|\delta|\le k}|\Gamma^\delta g_2[\Xi]|+\langle t\rangle^{1/2}\sum_{|\delta|\le k}|\Gamma^\delta g_4[\Xi]|\lesssim
\nonumber\\
&\lesssim
\jb{t}^{-(1/2)+\mu} \mathcal A_{2+\mu,3+k}[\vec{u}_0]+
\log(e+t)\sum_{|\beta|\le 3+k}\| |y|^{1/2} W_{1,1+\mu}(s,y)\Gamma^\beta f(s,y)\|_{L^\infty_tL^\infty_{{\Omega}}}.
\label{Star01}
\end{align}
On the other hand, we have $S_i[\Xi]= L_0[g_i]$ for $i=2,4$.
Thus, since $g_2$ and $g_4$ are supported on
$\overline{B_4\setminus B_2}$,
we are in a position to apply
\eqref{KataL02}
and we get
\begin{eqnarray}
&& \sum_{|\delta|\le k}\left(|\Gamma^\delta S_2[\Xi]|+|\Gamma^\delta S_4[\Xi]|\right)(t,x)
\lesssim\nonumber\\
&&\quad
\lesssim \mathcal A_{2+\mu,3+k}[\vec{u}_0]+\log(e+t)
\sum_{|\beta|\le 3+k}\| |y|^{1/2} W_{1,1+\mu}(s,y)\Gamma^\beta f(s,y)\|_{L^\infty_tL^\infty_\Omega}.
\label{eq.finalS2S4}
\end{eqnarray}
Now \eqref{ba3} follows from \eqref{eq.finalS0}, \eqref{eq.finalS1S3log} and \eqref{eq.finalS2S4}.

%%%%%%%%%%%%%%%%%%%%%%%
Next we prove \eqref{ba4}.
Trivially one has
\begin{eqnarray*}
&& \sum_{|\delta|\le k} |\Gamma^\delta \partial (\psi_1(x) S_0[\psi_2 \Xi](t,x))|\lesssim
\\
&& \quad \lesssim
\sum_{|\delta|\le k} |\Gamma^\delta \partial S_0[\psi_2 \Xi](t,x)|
+\sum_{|\delta|\le k} |\Gamma^\delta \nabla_x \psi_1(x)| |\Gamma^{\delta} S_0[\psi_2 \Xi](t,x)|.
\end{eqnarray*}
Since in $\Omega$
one has $|y|\simeq \langle y\rangle$, by
\eqref{eq.kubota2} and \eqref{eq.decayMG4} with $\eta=1/2$, we see that
\begin{eqnarray*}
&&\sum_{|\delta|\le k} |\Gamma^\delta \partial S_0[\psi_2 \Xi](t,x)|
  \lesssim \jb{t+|x|}^{-1/2}\jb{t-|x|}^{-1/2} {\mathcal A}_{2+\mu,k+1}[\vec{u}_0]+ \\
&& \quad +w_{1/2}(t,x) \log (e+t+|x|)
\sum_{|\delta|\le k+1}\|
|y|^{1/2}W_{1,1}(s,y) \Gamma^\delta f(s,y)\|_{L^\infty_tL^\infty_{\Omega}}.
\end{eqnarray*}
On the other hand, by \eqref{eq.kubota} and \eqref{eq.decayMG} with $\kappa=1$, we have
\begin{eqnarray*}
&& \jb{t+|x|}^{1/2}
\log^{-1}\left(e+\frac{\langle t+|x|\rangle}{\langle t-|x|\rangle}\right)
\sum_{|\delta|\le k}\left|\Gamma^\delta S_0[\psi_2\Xi](t,x)\right|\lesssim
\\
&& \quad \lesssim  {\mathcal A}_{3/2,k}[\vec{u}_0]+\log (e+t+|x|)
  \sum_{|\delta|\le k}\| |y|^{1/2}W_{1,1}(s,y) \Gamma^\delta f(s,y)\|_{L^\infty_tL^\infty_{\Omega}}.
\end{eqnarray*}
Since the logarithmic term on the left-hand side does not appear when $x \in \Omega_{2}$,
we get
\begin{eqnarray}
 && w_{1/2}^{-1}(t,x) \sum_{|\delta|\le k}\left| \Gamma^{\delta} \partial
   \bigl(\psi_1(x) S_0[\psi_2\Xi]\bigr)(t,x)\right|
 \nonumber\\
&& \quad \lesssim
{\mathcal A}_{2+\mu,k+1}[\vec{u}_0]+\log (e+t+|x|)
  \sum_{|\delta|\le k+1} \| |y|^{1/2}W_{1,1}(s,y) \Gamma^{\delta} f(s,y)\|_{L^\infty_tL^\infty_{\Omega}}.   %\noindent
\label{eq.finalS0bis}
\end{eqnarray}
Therefore, $\partial(\psi_1S_0[\psi_2\Xi])$ has the desired bound.

Let us recall that $|x|$ is bounded in $\supp S_1[\Xi](t,\cdot)\cup \supp S_3[\Xi](t,\cdot)$. In particular we get
$ w_{1/2}^{-1}(t,x)\lesssim \langle t\rangle^{1/2}$. From \eqref{eq.finalS1S3log} we deduce
\begin{eqnarray}
&& \sum_{|\delta|\le k} w_{1/2}^{-1}(t,x)
  \left( |\Gamma^\delta \partial S_1[\Xi](t,x)|+|\Gamma^\delta \partial S_3[\Xi](t,x)| \right)
\lesssim
\nonumber \\
&& \quad \lesssim \mathcal A_{2+\mu,3+k}[\vec{u}_0]+
\log(e+t)
\sum_{|\beta|\le 3+k}\| |y|^{1/2}W_{1,1+\mu}(s,y)\Gamma^\beta f(s,y)
\|_{L^\infty_tL^\infty_{\Omega}}.
\label{eq.finalS1S3logder}
\end{eqnarray}
As for $S_4[\Xi]$, we use a similar estimate to \eqref{eq.stimaS3} 
with $k$ replaced by $k+1$, that is
\begin{align}
&
\langle t\rangle^{1-\mu}
\sum_{|\delta|\le k+1}|\Gamma^\delta g_4[\Xi](t,x)|\lesssim
\nonumber\\
& \lesssim
 \mathcal A_{2+\mu,k+4}[\vec{u}_0]+
\log(e+t)
\sum_{|\beta|\le k+3}\| |y|^{1/2} W_{1,1+\mu}(s,y)\Gamma^\beta f(s,y)\|_{L^\infty_tL^\infty_{\Omega}}.
\label{g2g4}
\end{align}
Applying \eqref{KataL03} with $\rho=1-\mu$ and $\eta=\mu$~($0<\mu\le 1/4$),
we find that
\begin{eqnarray}
&& \sum_{|\delta|\le k}  w_{1-2\mu}^{-1}(t,x)
  |\Gamma^\delta \partial S_4[\Xi]|(t,x)
\lesssim
\nonumber\\
&&\quad
\lesssim \mathcal A_{2+\mu,k+4}[\vec{u}_0]+
\log(e+t)
\sum_{|\beta|\le k+3}\| |y|^{1/2} W_{1,1+\mu}(s,y)\Gamma^\beta f(s,y)\|_{L^\infty_tL^\infty_{\Omega}}.
\label{eq.finalS2S4der}
\end{eqnarray}
For treating $S_2[\Xi]$, we decompose $g_2[\Xi]$ into
$g_{2,1}[\Xi]$ and $g_{2,2}[\Xi]$ as was done for evaluating $S_1[\Xi]$.
Then $L_0[g_{2,1}]$ can be estimated as $S_4[\Xi]$.
On the other hand, using \eqref{KataL03} with $\rho=1/2$ and $\eta=0$ for $L_0[g_{2,2}]$,
we arrive at
\begin{eqnarray}
&& \sum_{|\delta|\le k}  w_{1/2}^{-1}(t,x)
  |\Gamma^\delta \partial S_2[\Xi]|(t,x)
\lesssim
\nonumber\\
&&\quad
\lesssim \mathcal A_{2+\mu,4+k}[\vec{u}_0]+
\log^2(e+t+|x|)
\sum_{|\beta|\le 4+k}\| |y|^{1/2} W_{1,1+\mu}(s,y)\Gamma^\beta f(s,y)\|_{L^\infty_tL^\infty_{\Omega}}.
\label{eq.finalS2S4derBis}
\end{eqnarray}
Thus we obtain \eqref{ba4} from \eqref{eq.finalS0bis}, \eqref{eq.finalS1S3logder},
\eqref{eq.finalS2S4der}, and \eqref{eq.finalS2S4derBis}.

In order to show \eqref{ba4weak}, we remark that
$w_{1/2}\le w_{(1/2)-\eta}$ so that
in \eqref{eq.finalS0bis} we can replace $w_{1/2}$ with $w_{1/2-\eta}$.
Moreover, \eqref{eq.finalS1S3logder}
and \eqref{g2g4} hold with $\mu=0$ if we replace
$\log(e+t)$ by $\log^2(e+t)$, thanks to
\eqref{KataL04} with $\kappa=1$.
Therefore, the application of \eqref{KataL03} with $\rho=1/2$ and $0<\eta<1/2$ leads to \eqref{eq.finalS2S4der} with $w_{1/2}^{-1}$ replaced by $w^{-1}_{(1/2)-\eta}$
and $\mu=0$ in the second term of the right-hand side.
Hence we get \eqref{ba4weak}.

Finally, we prove \eqref{ba4t}. We put $\eta'=\eta/2$.
By \eqref{eq.kubota2} and \eqref{eq.decayMG4}, we see that
\begin{eqnarray*}
&&\sum_{|\delta|\le k +1} |\Gamma^\delta \partial_t (\psi_1(x) S_0[\psi_2 \Xi](t,x))|
 \lesssim \sum_{|\delta|\le k +1} |\Gamma^\delta \pa_t S_0[\psi_2\Xi](t,x)|\lesssim\\
&& \quad \lesssim \jb{t+|x|}^{-1/2}\jb{t-|x|}^{-1/2} {\mathcal A}_{2+\mu,k+ 2}[\vec{u}_0]+
\\
&& \qquad +w_{1-\eta'}(t,x) \log (e+t+|x|)
\sum_{|\delta|\le k+ 2}\| |y|^{1/2}W_{1,1}(s,y) \Gamma^\delta f(s,y)\|_{L^\infty_tL^\infty_{\Omega}}.
\end{eqnarray*}
Therefore, $\pa_t\bigl(\psi_1S_0[\psi_2\Xi] \bigr)$ has the desired bound
because $w_{1-\eta'}\le w_{1-\eta}$.

Combining this estimate with \eqref{KataL01}, we obtain the estimate for $S_1[\Xi]$. Indeed, for $0<\eta<1$ we have
\begin{eqnarray*}
\langle t \rangle^{1-\eta'}
   \sum_{|\delta|\le  k +1}|\Gamma^\delta \partial_t S_1[ \Xi  ](t,x)|\lesssim
 \log(e+t) \sum_{|\beta|\le k+ 2}
\bigl\|\langle s\rangle^{1-\eta'} \partial^\beta \partial_t \bigl([\psi_1, -\Delta]S_0[\psi_2\Xi]
\bigr)(s,y)\bigr\|_{L^\infty_tL^\infty({\Omega_2})}.
\end{eqnarray*}
Recalling \eqref{eq.compsi}, we can use the estimate of
$\partial_t (\psi_1 S_0[\psi_2 \Xi])$
adding two derivatives.
In conclusion, we have
\begin{eqnarray*}
&& \langle t \rangle^{1-\eta'}
\sum_{|\delta|\le k +1} |\Gamma^\delta \partial_t S_1[\Xi](t,x)|\lesssim
\Theta_{\mu, k+4}(t)
\end{eqnarray*}
for $(t,x)\in [0,T)\times {\overline{\Omega}}$,
 where
$$
 \Theta_{\mu, m}(t):=
 {\mathcal A}_{2+\mu, m}[\vec{u}_0]+
{\rm log}^2(e+t)
\sum_{|\delta|\le m}\| |y|^{1/2}W_{1,1}(s,y) \Gamma^\delta f(s,y)\|_{L^\infty_tL^\infty_{\Omega}}.
$$
Since we have $(1-\psi_2)\Xi\in X_{3}(T)$ for any $\Xi\in X(T)$, by using \eqref{KataL01}
with $\rho=1-\eta'$ we have
$$
\jb{t}^{1-\eta'}\sum_{|\delta|\le k+1} |\Gamma^\delta \partial_t S_3[\Xi](t,x)|\lesssim
\Theta_{\mu, k+3}(t).
$$

In order to treat $S_2[\Xi]$ and $S_4[\Xi]$, we set $g_j[\Xi]=(\partial_t^2-\Delta) S_j[\Xi]$ for $j=2, 4$
as before.
Going similar lines to the estimates for $S_1[\Xi]$ and $S_3[\Xi]$, with a derivative more, we can reach at
\begin{eqnarray*}
\langle t\rangle^{1-\eta'} \sum_{|\delta|\le k +1}|\Gamma^\delta \partial_t g_2[\Xi]|+\langle t\rangle^{1-\eta'}
 \sum_{|\delta|\le k +1}|\Gamma^\delta \partial_t g_4[\Xi]|\lesssim \Theta_{\mu, k+5}(t).
\end{eqnarray*}
Let us recall that $g_2$ and $g_4$ are supported on $\overline{B_4\setminus B_2}$ and
$
\pa_t S_i[\Xi]= L_0[{\pa_t}g_i]$ for $i=2,4$.
We are in a position to apply \eqref{KataL03} (with $\rho=1-\eta'$,
and $\eta$ replaced by $\eta'$)
and
obtain
$$
w_{1-\eta}^{-1}(t,x)\sum_{|\delta|\le k} \sum_{i=2, 4} |\Gamma^\delta \pa \pa_t S_i[\Xi](t,x)|
\lesssim \sum_{i=2,4}\sum_{|\delta|\le k+1}\|\jb{s}^{1-\eta'}\pa^\beta\pa_tg_i(s,y)\|_{L^\infty_tL^\infty(\Omega_4)}
\lesssim \Theta_{\mu, k+5}(t).
$$

The proof of Theorem~\ref{main} is complete.
\end{proof}

\begin{rem}\label{Rem83}
\normalfont
The main difference between the Dirichlet and the Neumann boundary cases is in the logarithmic loss in the local energy decay estimate
\eqref{eq.LE}. Due to this term, comparing our result with the one in \cite{KP}, we see that the estimates for $S_2[\Xi]$ and $S_4[\Xi]$ are worse
in the Neumann case.
\end{rem}

%%%%%%%%%%%%%%%%%%
\renewcommand{\theequation}{A.\arabic{equation}}

\setcounter{equation}{0}  % reset counter
\renewcommand{\thelemma}{A.\arabic{lemma}}

\setcounter{lemma}{0}
\renewcommand{\thetheorem}{A.\arabic{theorem}}

\setcounter{theorem}{0}
%%%%%%%%%%%%%%%%%%%%%%%%%%%%%%%%%%%%%%%
\section*{Appendix: A local existence theorem of smooth solutions}
Here we sketch a proof of the following local existence theorem for the semilinear case
(for the general case, see \cite{SN89}).
We underline that the convexity assumption for the obstacle is not necessary for the local existence result.

\begin{theorem}\label{LE}
Let ${\mathcal O}$ be a bounded obstacle with $\mathcal C^\infty$ boundary and $\Omega=\R^2\setminus \mathcal O$.
For any $\phi$, $\psi\in {\mathcal C}^\infty_0(\overline{\Omega})$
satisfying the compatibility condition of infinite order and
\begin{equation}
\label{Ori-init}
\|\phi\|_{H^{5}(\Omega)}+\|\psi\|_{H^{4}(\Omega)}\le R,
\end{equation}
there exists a positive constant $T=T(R)$ such that
the mixed problem \eqref{eq.PMN} admits a
unique solution $u\in C^\infty\bigl([0,T)\times \overline{\Omega}\bigr)$.
Here $T$ is a constant depending only on $R$.
\end{theorem}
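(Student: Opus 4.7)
The plan is to prove the theorem by a Picard iteration in the energy space adapted to the Neumann boundary condition, combining the linear theory of \cite{I68} with the energy/elliptic estimates already developed in Section~4 of this paper. Define $u^{(0)}:=K[(\phi,\psi)]$ and, inductively, let $u^{(n+1)}:=S[(\phi,\psi),G(\partial u^{(n)})]$, i.e.\ the solution of the linear mixed problem \eqref{eq.PMfT} with data $(\phi,\psi)$ and forcing $G(\partial u^{(n)})$. The first task is to verify that each $u^{(n)}$ is well defined as a smooth function on $[0,T]\times\overline{\Omega}$, which amounts to checking that $((\phi,\psi),G(\partial u^{(n)}))\in X(T)$. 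This follows from the infinite order compatibility assumption on $(\phi,\psi,G)$ in Definition~\ref{CCN}: a direct induction on $n$ shows that the formal Taylor coefficients $\pa_t^k u^{(n+1)}(0,\cdot)$ coincide with the sequence $\{v_j\}$ of Definition~\ref{CCN} for all $k$ and $n$ (this uses only that $G$ is a polynomial and the analogous identity \eqref{CC0} applied to $u^{(n)}$), so $\pa_\nu \pa_t^k u^{(n+1)}(0,\cdot)=0$ for every $k$, giving the required compatibility.

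For the uniform bound, set
\begin{equation*}
 N(u;t):=\|u(t)\|_{H^{5}(\Omega)}+\sum_{j=1}^{4}\|\pa_t^j u(t)\|_{H^{5-j}(\Omega)},
\end{equation*}
and I will show that if $N(u^{(n)};t)\le 2C R$ on $[0,T]$ for a suitable absolute constant $C$ and small enough $T=T(R)$, then the same bound holds for $u^{(n+1)}$. For the time-only derivatives $\pa_t^j u^{(n+1)}$ with $0\le j\le 4$, Neumann boundary condition is preserved since $[\pa_t,\pa_\nu]=0$, so the standard energy identity gives
\begin{equation*}
 \tfrac{d}{dt}E(\pa_t^j u^{(n+1)};t)\le \|\pa_t^j G(\partial u^{(n)})(t)\|_{L^2_\Omega}\,\|\pa_t^{j+1}u^{(n+1)}(t)\|_{L^2_\Omega},
\end{equation*}
and the Moser/Leibniz estimate bounds $\|\pa_t^j G(\partial u^{(n)})\|_{L^2_\Omega}$ by a polynomial in $N(u^{(n)};t)$ (using $H^3(\Omega)\hookrightarrow L^\infty(\Omega)$ to control the low-order factors). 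Integration yields $\sum_{j=0}^{4}\|\pa_t^j u^{(n+1)}(t)\|_{H^{1}(\Omega)}\lesssim R+TP(2CR)$ for a polynomial $P$. To recover the remaining pure-space and mixed derivatives up to total order $5$, I would apply the elliptic estimate Lemma~\ref{elliptic2} to $\pa_t^j u^{(n+1)}$ (which satisfies the Neumann condition) exactly as in Subsection~\ref{sec.32}, using $\Delta \pa_t^j u^{(n+1)}=\pa_t^{j+2}u^{(n+1)}-\pa_t^j G(\partial u^{(n)})$ and an interior bound on $\|u^{(n+1)}\|_{L^2(\Omega_{R+1})}$ obtained via integration of $\pa_t u^{(n+1)}$ from the initial data. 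Choosing $T$ so small that $TP(2CR)\le CR$ closes the induction.

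Convergence is then proved by contraction in the lower-regularity norm $N_0(u;t):=\|u(t)\|_{H^{1}(\Omega)}+\|\pa_t u(t)\|_{L^{2}(\Omega)}$: the difference $w^{(n)}:=u^{(n+1)}-u^{(n)}$ satisfies the linear Neumann problem with zero data and forcing $G(\partial u^{(n)})-G(\partial u^{(n-1)})$, which, by the cubic structure and the uniform bound on $N$, is controlled by $C(R)\,N_0(w^{(n-1)};t)$ in $L^2$. The $L^2$-energy estimate for the Neumann problem then gives $\sup_{[0,T]}N_0(w^{(n)};\cdot)\le T\,C(R)\sup_{[0,T]}N_0(w^{(n-1)};\cdot)$, a contraction for $T$ small, producing a limit $u$ which, by the uniform $H^5$ bound, lies in $L^\infty([0,T];H^5)\cap W^{1,\infty}([0,T];H^4)$ and solves \eqref{eq.PMN}. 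Uniqueness follows from the same linear energy estimate applied to the difference of two solutions.

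The final step is to upgrade $u$ to $C^\infty([0,T]\times\overline{\Omega})$. The idea is bootstrap: once the solution is known to be in the above regularity class, the forcing $G(\partial u)$ lies in $C^0([0,T];H^4)\cap C^1([0,T];H^3)\cap\cdots$, so by reapplying the linear theory of \cite{I68} together with the infinite order compatibility (which persists for $u$ because $\pa_t^k u(0,\cdot)=v_k$ coincides with the nonlinear compatibility sequence), we inductively gain a derivative of regularity at each step. The main obstacle in the whole argument is the careful bookkeeping of the compatibility conditions along the iteration: one must ensure that the nonlinear compatibility of infinite order for $(\phi,\psi,G)$ translates into the linear compatibility of infinite order of $((\phi,\psi),G(\partial u^{(n)}))$ at every stage, which is what allows the linear Neumann theory to produce a $C^\infty$ iterate rather than a merely finite-regularity one. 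All other ingredients—the energy identity for $\pa_t^j$, the elliptic regularity, and the contraction in the weak norm—are by now standard and are essentially contained in the computations of Sections~3 and~4 of this paper.
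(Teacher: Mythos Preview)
There is a genuine gap in your choice of the initial iterate. You set $u^{(0)}:=K[(\phi,\psi)]$, the solution of the \emph{homogeneous} linear Neumann problem, and then claim by induction on $n$ that $\pa_t^k u^{(n+1)}(0,\cdot)=v_k$ for all $k$. The induction step is correct, but the base case is not: the time--Taylor coefficients of $K[(\phi,\psi)]$ are the \emph{linear} sequence $w_0=\phi$, $w_1=\psi$, $w_k=\Delta w_{k-2}$, which differs from the nonlinear sequence $v_k$ as soon as $k\ge 2$. In particular, for $K[(\phi,\psi)]$ even to belong to $Y_T^4$ one needs the linear homogeneous compatibility $\pa_\nu\Delta\phi=0$, whereas the hypothesis only gives $\pa_\nu v_2=\pa_\nu\bigl(\Delta\phi+G(\psi,\nabla\phi)\bigr)=0$, so that $\pa_\nu\Delta\phi=-\pa_\nu G(\psi,\nabla\phi)$ has no reason to vanish. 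Hence $u^{(0)}$ is in general not in $Y_T^5$ (nor smooth), your norm $N(u^{(0)};t)$ is not finite, and the iteration never starts. The problem then propagates: since $\pa_t^2 u^{(0)}(0)=\Delta\phi\neq v_2$, one computes $\pa_t^3 u^{(1)}(0)=\Delta\psi+G^{(1)}[\phi,\psi,\Delta\phi]\neq v_3$, so the linear compatibility needed to put $u^{(1)}$ in $Y_T^5$ fails as well.

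The paper fixes exactly this point by constructing $u^{(0)}$ differently: it extends each $v_j$ to $\R^2$ and writes an explicit inverse Fourier formula whose time--Taylor coefficients at $t=0$ are \emph{prescribed} to equal $v_j$ for $0\le j\le m+2$; no boundary value problem is solved at this stage, so no compatibility is required of $u^{(0)}$ itself. With that $u^{(0)}$ your inductive verification of compatibility for $u^{(n+1)}$ goes through verbatim, and the remainder of your scheme---energy identity for $\pa_t^j$, elliptic recovery via Lemma~\ref{elliptic2}, contraction---is essentially the paper's proof of Lemma~\ref{LEw}. Your passage to $C^\infty$ (bootstrap through the linear theory, using that $\pa_t^k u(0)=v_k$ for the solution $u$) is a legitimate alternative to the paper's route, which instead proves a uniform a~priori bound in each $Y_T^{m+2}$ on the fixed interval $[0,T(R)]$ by Gronwall (exploiting that the cubic nonlinearity is linear in the top norm once the lower norm is controlled) and then reapplies the finite-regularity lemma.
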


For nonnegative integer $s$, we put
$$
Y^s_T:=\bigcap_{j=0}^{s} {\mathcal C}^{j}\bigl([0,T]; H^{s-j}(\Omega)\bigr),
$$
and
$$
\|h\|_{Y^s_T}:=\sum_{j=0}^s \sup_{t\in [0,T]} \|\pa_t^j h(t,\cdot)\|_{H^{s-j}(\Omega)}.
$$
Let $v_j$ for $j\ge 0$ be given as in Definition~\ref{CCN}.
First we show the following result.

\begin{lemma}\label{LEw}
Let $m\ge 2$.
Suppose that $(\phi$, $\psi)\in H^{m+2}(\Omega)\times H^{m+1}(\Omega)$
satisfies the compatibility condition of order $m+1$, that is to say,
$\left.\pa_\nu v_j\right|_{\pa\Omega}=0$ for $j\in \{0,1,\ldots, m+1\}$,
and
\begin{equation}
\|\phi\|_{H^{m+2}(\Omega)}+\|\psi\|_{H^{m+1}(\Omega)}\le M.
\label{init}
\end{equation}
Then\footnote{The assumption on initial data here is just for simplicity, and
we can prove the same result for initial data with compatibility condition of order $m$ in fact.},
there exists a positive constant $T=T(m, M)$ such that
the mixed problem \eqref{eq.PMN} admits a
unique solution $u\in Y_{T}^{m+2}$. Here $T$ is a constant depending only on $m$ and $M$.
\end{lemma}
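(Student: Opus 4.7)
The plan is to construct the solution by a Picard iteration and close a fixed-point argument in $Y_T^{m+2}$ for sufficiently small $T=T(m,M)$. Define $u^{(0)}\equiv 0$ (or, equivalently, $u^{(0)}=K[(\phi,\psi)]$), and for $n\ge 0$ set $u^{(n+1)}=S[(\phi,\psi),G(\pa u^{(n)})]$, i.e. $u^{(n+1)}$ is the solution of the linear Neumann problem
\begin{equation*}
(\pa_t^2-\Delta)u^{(n+1)}=G(\pa u^{(n)}),\quad \pa_\nu u^{(n+1)}=0,\quad u^{(n+1)}(0)=\phi,\ \pa_t u^{(n+1)}(0)=\psi.
\end{equation*}
In order for this iteration to produce iterates in $Y_T^{m+2}$, one must first verify that $(\phi,\psi,G(\pa u^{(n)}))$ satisfies the compatibility condition of order $m+1$ required by the linear theory (cf.\ \cite{I68} and Subsection~\ref{LinearNeumann}). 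This is checked inductively in $n$: the initial sequence $v_j$ built from $(\phi,\psi,G)$ coincides with $\pa_t^j u^{(n+1)}|_{t=0}$ for $j\le m+1$ whenever $\pa_t^j u^{(n)}|_{t=0}=v_j$ for $j\le m$, because $v_j=\Delta v_{j-2}+G^{(j-2)}[v_0,\dots,v_{j-1}]$ and the equation propagates the traces; the hypothesis $\pa_\nu v_j|_{\pa\Omega}=0$ for $j\le m+1$ then gives exactly what is needed.

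The a priori bound proceeds as in Section~4: differentiating the equation $j$ times in $t$ (which preserves the Neumann boundary condition since $\pa_t$ commutes with the restriction to $\pa\Omega$), the standard energy identity gives
\begin{equation*}
\frac{d}{dt}E(\pa_t^j u^{(n+1)};t)=\int_\Omega \pa_t^j G(\pa u^{(n)})\,\pa_t^{j+1}u^{(n+1)}\,dx,\quad 0\le j\le m+1.
\end{equation*}
Summing over $j$ and using Moser-type estimates for the cubic polynomial $G$ together with the 2D Sobolev embedding $H^{m+2}(\Omega)\hookrightarrow W^{1,\infty}(\Omega)$ (valid for $m\ge 0$), one controls $\|\pa_t^j G(\pa u^{(n)})\|_{L^2}$ by $\|u^{(n)}\|_{Y_T^{m+2}}^3$ on the time interval. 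The elliptic estimate of Lemma~\ref{elliptic2}, applied to $\pa_t^j u^{(n+1)}$ with $j+|\alpha|\le m+2$, converts purely temporal control into full $H^{m+2}$ control, since $\Delta\pa_t^j u^{(n+1)}=\pa_t^{j+2}u^{(n+1)}-\pa_t^j G(\pa u^{(n)})$. Combining these ingredients gives an inequality of the form
\begin{equation*}
\|u^{(n+1)}\|_{Y_T^{m+2}}\le C_0 M + T\,P\!\left(\|u^{(n)}\|_{Y_T^{m+2}}\right),
\end{equation*}
where $P$ is a polynomial with nonnegative coefficients, depending on $m$. Choosing $T=T(m,M)$ small enough, the ball $\{v:\|v\|_{Y_T^{m+2}}\le 2C_0M\}$ is invariant under the iteration.

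Contraction is proved in the weaker space $Y_T^{m+1}$: the difference $w^{(n+1)}=u^{(n+1)}-u^{(n)}$ solves the linear Neumann problem with zero data and right-hand side $G(\pa u^{(n)})-G(\pa u^{(n-1)})$, which factorizes through quadratic expressions in the previous iterates times $\pa w^{(n)}$. The same energy/elliptic argument, now applied to $w^{(n+1)}$ and using the uniform $Y_T^{m+2}$ bound to absorb the $L^\infty$-type factors, yields
\begin{equation*}
\|w^{(n+1)}\|_{Y_T^{m+1}}\le C_1 T\,\|w^{(n)}\|_{Y_T^{m+1}},
\end{equation*}
and shrinking $T$ further makes this a contraction. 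The limit $u\in Y_T^{m+1}$ satisfies the equation; standard weak-$*$ lower semicontinuity of norms upgrades it to $u\in Y_T^{m+2}$, with the Neumann condition inherited from the iterates by trace continuity. Applying this result with $m$ arbitrarily large (whose hypotheses are verified by virtue of $(\phi,\psi,G)$ satisfying the infinite-order compatibility condition and of $\phi,\psi\in\mathcal C^\infty_0(\overline\Omega)$) and noting that the existence time $T(m,M)$ produced at each step is bounded below uniformly in $m$ (by, say, $T(5,R)$ obtained from the $H^5\times H^4$ bound \eqref{Ori-init}), one obtains $u\in\mathcal C^\infty([0,T)\times\overline\Omega)$, which proves Theorem~\ref{LE}.

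The main obstacle is the bookkeeping of the compatibility conditions along the iteration: one must show by induction that the $v_j^{(n+1)}$ associated with the linearized problem for $u^{(n+1)}$ agree with the $v_j$ produced by Definition~\ref{CCN} up to order $m+1$, so that the hypothesis $\pa_\nu v_j|_{\pa\Omega}=0$ transfers to the data for the linear Neumann problem at each step. Everything else is a routine combination of energy estimates, Moser estimates, and the elliptic regularity of Lemma~\ref{elliptic2}.
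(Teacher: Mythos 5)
Your Picard scheme and the energy/elliptic ingredients are the same as in the paper's argument, and you correctly flag the bookkeeping of compatibility conditions as the crux. However, the way you dispose of that crux does not work: neither choice $u^{(0)}\equiv 0$ nor $u^{(0)}=K[(\phi,\psi)]$ gets the induction on compatibility off the ground. To know that $(\phi,\psi,G(\partial u^{(0)}))$ satisfies the compatibility condition of order $m+1$ for the \emph{linear} problem defining $u^{(1)}$, one needs the sequence $u_j=\Delta u_{j-2}+\partial_t^{j-2}\!\left(G(\partial u^{(0)})\right)(0,\cdot)$ to satisfy $\partial_\nu u_j=0$ for $j\le m+1$. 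The hypothesis only gives $\partial_\nu v_j=0$, so one must first prove $u_j=v_j$, and unwinding $\partial_t^{j-2}G(\partial u^{(0)})(0,\cdot)=G^{(j-2)}[u^{(0)}(0),\dots,\partial_t^{j-1}u^{(0)}(0)]$ shows this requires $\partial_t^i u^{(0)}(0)=v_i$ for $i\le m$. With $u^{(0)}\equiv 0$ this already fails at $i=0$, and with $u^{(0)}=K[(\phi,\psi)]$ one has $\partial_t^2 u^{(0)}(0)=\Delta\phi\neq \Delta\phi+G^{(0)}[\phi,\psi]=v_2$, so it fails at $i=2$, which matters precisely in the regime $m\ge 2$ of the lemma. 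So the iterate $u^{(1)}$ need not lie in $Y_T^{m+2}$, and the whole scheme stalls at the first step.

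The paper gets around this by \emph{constructing} an admissible $u^{(0)}$ rather than taking it for free: after extending the $v_j$ to functions $V_j\in H^{m+2-j}(\R^2)$, one builds
$\widehat V(t,\xi)=\sum_{k,l=0}^{m+2}e^{i(k+1)\langle\xi\rangle t}a_{kl}\widehat{V_l}(\xi)\langle\xi\rangle^{-l}$
with $(a_{kl})$ inverting $(i^k(l+1)^k)$, and sets $u^{(0)}(t)=V(t)|_\Omega$. By design $\partial_t^j u^{(0)}(0)=v_j$ for all $j\le m+2$ and $\|u^{(0)}\|_{Y_T^{m+2}}\lesssim M$; this makes the inductive claim $v_j^{(n)}=v_j$, $0\le j\le m+2$, hold for all $n\ge 0$, which is what you need before any energy estimate is attempted. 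Once this initialization is in place, the rest of your argument (energy identity for $\partial_t^j$, Moser bounds for $G$, elliptic regularity via Lemma~\ref{elliptic2}, closing a ball for small $T$, and contraction) follows the paper's route; your variant of running the contraction in $Y_T^{m+1}$ and upgrading by weak-$*$ semicontinuity is a legitimate alternative to contracting directly in $Y_T^{m+2}$, but it is not where the difficulty lies.
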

\begin{proof} %[Outline of proof]
To begin with, we note that the Sobolev embedding theorem implies
\begin{equation}
\label{Sob01}
\sum_{|\beta|\le [(m+1)/2]+1} \|\pa^\beta h(t,\cdot)\|_{L^\infty_\Omega}
\lesssim \sum_{|\beta|\le [(m+1)/2]+3}\|\pa^\beta h(t,\cdot)\|_{L^2_\Omega}
\le \sum_{|\beta|\le m+2}\|\pa^\beta h(t,\cdot)\|_{L^2_\Omega}
\end{equation}
for $m\ge 2$.

We show the existence of $u$ by constructing an approximate sequence $\bigl\{u^{(n)}\bigr\}\subset Y_T^{m+2}$,
and proving its convergence for suitably small $T>0$.
Throughout this proof, $C_M$ denotes a positive constant depending on $M$, but being independent of $T$.
In order to keep the compatibility condition, we need to choose an appropriate function for the first step:
for a moment, we suppose that %, fixed $T>0$, 
we can choose
a function $u^{(0)}\in Y_T^{m+2}$ satisfying $(\pa_t^j u^{(0)})(0,x)=v_j$ for all $j\in \{0,1,\ldots, m+2\}$.
For $n\ge 1$ we inductively define $u^{(n)}$ as
\begin{equation}
u^{(n)}=S\bigl[\phi, \psi, G\bigl(\pa u^{(n-1)}\bigr) \bigr].
\end{equation}
We have to check that $u^{(n)}$ is well defined.
Let $v_0^{(n)}:=\phi$, $v_1^{(n)}:=\psi$, and $v_j^{(n)}:=\Delta v_{j-2}^{(n)}
+\pa_t^{j-2} (G(\pa u^{(n-1)})\bigr|_{t=0}$
for $j\ge 2$.
Suppose that $u^{(n-1)}\in Y_T^{m+2}$ with $(\pa_t^j u^{(n-1)})(0)=v_j$ for $0\le j\le m+2$.
Then we can see that $v_j^{(n)}=v_j$ for $0\le j\le m+2$, and
consequently the compatibility condition of order $m+1$ is satisfied for the equation
of $u^{(n)}$. Since \eqref{Sob01} implies $G(\pa u^{(n-1)})\in Y_T^{m+1}$, the linear theory (see \cite{I68}) shows that $u^{(n)}\in Y_T^{m+2}$.
Therefore, by induction with respect to $n$, we see that $\{u^{(n)}\}\subset Y_T^{m+2}$ is well defined,
and that $(\pa_t^j u^{(n)})(0)=v_j^{(n)}=v_j$ for $0\le j\le m+2$ and $n\ge 0$.

Now we are going to explain how to construct $u^{(0)}$.
We can show that $v_j\in H^{m+2-j}(\Omega)$ for $0\le j\le m+2$ by its definition and \eqref{Sob01}. By the well-known extension theorem, there is $V_j\in H^{m+2-j}(\R^2)$
such that $\left. V_j\right|_\Omega=v_j$ and
$\|V_j\|_{H^{m+2-j}(\R^2)}\lesssim \|v_j\|_{H^{m+2-j}(\Omega)}$.
Let $(a_{kl})_{0\le k, l\le m+2}$ be the inverse matrix of $(i^k(l+1)^k)_{0\le k, l\le m+2}$, where $i=\sqrt{-1}$.
We put
$$
\widehat{V}
(t, \xi)=\sum_{k,l=0}^{m+2} \exp(i(k+1)\jb{\xi}t)a_{kl}\widehat{V_l}(\xi)\jb{\xi}^{-l},
$$
where $\widehat{V_l}$ is the Fourier transform of $V_l$.
 We set $u^{(0)}(t)=\left. V(t) \right|_{\Omega}$ with the inverse Fourier transform
$V(t)$ of $\widehat{V}(t)$.
Now we can show that $u^{(0)}(t)$
has the desired property, and
$\|u^{(0)}\|_{Y_T^{m+2}}\le C_M$ (see \cite{SN89} where this kind of function
is used to reduce the problem to the case of zero-data).

Now we are in a position to show that $u^{(n)}$ converges  to a local solution of \eqref{eq.PMN} on $[0,T]$ with appropriately chosen $T$.
For simplicity of description, we put
$$
\Norm{h(t)}_k=\sum_{j=0}^{m+2-k}\|\pa_t^j h(t)\|_{H^k(\Omega)}
$$
for $0\le k\le m+2$. Note that we have $\|h\|_{Y_T^{m+2}}\lesssim \sup_{t\in[0,T]}\sum_{k=0}^{m+2}\Norm{h(t)}_k$.
We also set $G_n(t,x)=G\bigl(\pa u^{(n)}(t,x)\bigr)$ for $n\ge 0$.
Combining the elementary inequality
$$
\|h(t)\|_{L^2_\Omega}\le \|h(0)\|_{L^2_\Omega}+\int_0^t \|(\pa_t h)(\tau)\|_{L^2_\Omega}d\tau
$$
with the standard energy inequality for $\pa_t^j u^{(n)}$ with $0\le j\le m+1$, we get
$$
\Norm{u^{(n)}(t)}_0+\Norm{u^{(n)}(t)}_1\le (1+T)\left(C_M+C\sum_{j=0}^{m+1}\int_0^t \|(\pa_t^j G_{n-1})(\tau)\|_{L^2_\Omega}d\tau\right).
$$
Writing
$$
\Delta \pa^\beta u^{(n)}(t,x)=\pa_t^2 \pa^\beta u^{(n)}-(\pa^\beta G_{n-1})(0,x)
{}-\int_0^t (\pa_t\pa^\beta G_{n-1})(\tau,x)d\tau
$$
for a multi-index $\beta$
and using the elliptic estimate, given in Lemma \ref{elliptic2}, we have
$$
\Norm{u^{(n)}(t)}_k\le C \left(\Norm{u^{(n)}(t)}_{k-2}+\Norm{u^{(n)}(t)}_{k-1}+C_M+\sum_{|\alpha|\le k-1}\int_0^t \|(\pa^\alpha G_{n-1})(\tau)\|_{L^2_\Omega}d\tau\right)
$$
for $2\le k\le m+2$.
By induction we get control of $\Norm{u^{(n)}(t)}_k$ for $0\le k\le m+2$, and obtain
\begin{equation} 
\label{EnergyLocal}
\sum_{k=0}^{m+2} \Norm{u^{(n)}(t)}_k\le (1+T)\left(C_M+C\sum_{|\alpha|\le m+1}\int_0^t
\|(\pa^\alpha G_{n-1})(\tau)\|_{L^2_\Omega}d\tau\right).
\end{equation}
It follows from \eqref{Sob01} that
\begin{equation}
\label{EstNonlinearity01}
 \sum_{|\alpha|\le m+1}
 \|(\pa^\alpha G_{n-1})(\tau)\|_{L^2_\Omega}
  \le C\|u^{(n-1)}\|_{Y^{m+2}_T}^3, \quad 0\le \tau\le T,
\end{equation}
and \eqref{EnergyLocal} implies $\|u^{(n)}\|_{Y_T^{m+2}}\le (1+T)\left(C_M+CT\|u^{(n-1)}\|_{Y_T^{m+2}}^3\right)$
for $n\ge 1$. From this,
if we take appropriate constants $N_M$ and $T_M$ which can be determined by $M$,
we can show that $\|u^{(n)}\|_{Y_T^{m+2}}\le N_M$ for all $n\ge 0$, provided that $T\le T_M$.
In the same manner, we can also show that there is some $T_M'(\le T_M)$ such that
$$
\|u^{(n+1)}-u^{(n)}\|_{Y_T^{m+2}}\le \frac{1}{2}\|u^{(n)}-u^{(n-1)}\|_{Y_T^{m+2}}
$$
for all $n\ge 1$, provided that $T\le T_M'$. Now we see that if $T\le T_M'$, then $\{u^{(n)}\}$ is a Cauchy sequence in $Y_T^{m+2}$, and there is $u\in Y_T^{m+2}$ such that $\lim_{n\to\infty}\|u^{(n)}-u\|_{Y_T^{m+2}}=0$.
It is not difficult to see that this $u$ is the desired solution to \eqref{eq.PMN}.

Uniqueness can be easily obtained by the energy inequality.
\end{proof}
%%%%%%%%%%%%%%%%%%%%%%%%%
Theorem~\ref{LE} is a corollary of Lemma~\ref{LEw}.
%%%%%%%%%%%%%%%%%%%%%%%%%
\begin{proof}[
Proof of Theorem~\ref{LE}]
The assumption on the initial data guarantees that
for each $m\ge 3$, there is a positive constant $M_m$ such that $\|\phi\|_{H^{m+2}(\Omega)}+\|\psi\|_{H^{m+1}(\Omega)}\le M_m$.
Hence, by Lemma~\ref{LEw},
there is $T_m=T(m, M_m)>0$ such that \eqref{eq.PMN} admits a unique solution $u\in Y_{T_m}^{m+2}$.
Note that we may take $T_3=T(3,R)$. We put
\begin{equation}\label{eq.bi}
C_0:=\|u\|_{Y_{T_3}^{3+2}}.
\end{equation}
Our aim is to prove that
\eqref{eq.PMN} admits a solution $u\in \bigcap_{m\ge 3}Y_{T_3}^{m+2}$.
Then the Sobolev embedding theorem implies that $u\in {\mathcal C}^\infty\left([0,T_3]\times \overline{\Omega}\right)$,
which is the desired result. For this purpose,
we are going to prove the following {\it a priori} estimate:
for each $m\ge 3$, if $u\in Y^{m+2}_T$ is a solution to  \eqref{eq.PMN} with some $T\in (0, T_3]$,
then there is a positive constant $C_m$, which is independent of $T$, such that
\begin{equation}
\label{LW_conclusion}
 \|u(t)\|_{Y_T^{m+2}}\le C_m.
\end{equation}
Once we obtain this estimate, by applying Lemma~\ref{LEw} repeatedly, we can see that $u\in Y_{T_3}^{m+2}$
for each $m\ge 3$, which concludes the proof of Theorem~\ref{LE}.

Now we show \eqref{LW_conclusion} by induction. For $m=3$ \eqref{LW_conclusion} follows immediately from \eqref{eq.bi}.
Suppose that
we have \eqref{LW_conclusion} for some $m=l\ge 3$.
If we put
$$
\Norm{h(t)}_k=\sum_{j=0}^{l+3-k}\|\pa_t^j h(t)\|_{H^k(\Omega)},
$$
then, similarly to \eqref{EnergyLocal}, we obtain
$$
\sum_{k=0}^{l+3} \Norm{u(t)}_k\le (1+T_3)\left(C+C\sum_{|\alpha|\le l+2}\int_0^t
\bigl\|(\pa^\alpha \bigl(G\bigl(\pa u(\tau)\bigr)\bigr)\bigr\|_{L^2_\Omega}d\tau\right).
$$
Since $[(m+1)/2]+3\le m+1$ for $m\ge 4$, we have
\begin{equation}
\label{Sob02}
\sum_{|\beta|\le [(m+1)/2]+1} \|\pa^\beta h(t,\cdot)\|_{L^\infty_\Omega}
\le C \sum_{|\beta|\le m+1}\|\pa^\beta h(t,\cdot)\|_{L^2_\Omega},\quad  m\ge 4,
\end{equation}
in place of \eqref{Sob01}. Combining this estimate for $m=l+1$ with the inductive assumption, we get
$$
\sum_{|\alpha|\le l+2}\bigl\|\pa^\alpha\bigl(G(\pa u(\tau))\bigr)\bigr\|_{L^2_\Omega}\le CC_l^2
\sum_{k=0}^{l+3}\Norm{u(\tau)}_k,
$$
which yields
$$
\sum_{k=0}^{l+3} \Norm{u(t)}_k\le (1+T_3)\left(C+CC_l^2\int_0^t \sum_{k=0}^{l+3} \Norm{u(\tau)}_kd\tau\right).
$$
Now the Gronwall Lemma implies $\sum_{k=0}^{l+3} \Norm{u(t)}_k\le C(1+T_3)\exp\bigl(C C_l^2(1+T_3)T_3\bigr)=:C_{l+1}$
for $0\le t\le T(\le T_3)$, which implies $\|u\|_{Y_T^{l+3}}\le C_{l+1}$ for $0\le T\le T_3$.
This completes the proof of \eqref{LW_conclusion}.
\end{proof}

\begin{center}
{\bf Acknowledgments}
\end{center}
The first author is partially supported by Grant-in-Aid for Scientific Research (C) (No. 23540241), JSPS.
The second author is partially supported by Grant-in-Aid for Science Research (B) (No. 24340024), JSPS.
The third author is partially supported by GNAMPA Projects 2010 and 2011, coordinating
by Prof. P. D'Ancona.


\begin{thebibliography}{abc99}
\bibitem[ADN59]{ADN}
{\sc S. Agmon, A. Douglis, and L. Nirenberg},
{\it Estimates near the boundary for solutions of elliptic
partial differential equations satisfying general boundary conditions I},
Comm. Pure Appl. Math.
{\bf 12} (1959), 623--737.

\bibitem[D03]{DiF03}
{\sc M. Di Flaviano},
{\it Lower bounds of the life span of classical solutions to a system
of semilinear wave equations in two space dimensions},
 J. Math. Anal. Appl.
{\bf 281} (2003), 22--45.

\bibitem[GL04]{GeLu04}
{\sc V. Georgiev and S. Lucente},
{\it Decay for Nonlinear Klein-Gordon Equations},
NoDEA
{\bf 11} (2004), 529--555.

\bibitem[G93]{God93} {\sc P. Godin},
{\it Lifespan of solutions of semilinear wave equations in two space
dimensions},
Comm. Partial Differential Equations, {\bf 18} (1993), 895--916.

\bibitem[I68]{I68}
{\sc M. Ikawa},
{\it Mixed problems for hyperbolic equations of second order},
J. Math. Soc. Japan
{\bf 20} (1968), 580--608.

\bibitem[KK08]{KaKu08}
{\sc S. Katayama and H. Kubo},
{\it An elementary proof of global existence for nonlinear wave equations in an exterior domain},
J. Math. Soc. Japan
{\bf 60} (2008),  1135--1170.
\bibitem[KK12]{KaKu12}
{\sc S. Katayama and H.Kubo},
{\it Lower bound of the lifespan of solutions to semilinear wave equations in an exterior domain},
ArXiv: 1009.1188.

\bibitem[Kl85]{kl0}
{\sc S. Klainerman},
{\it Uniform decay estimates and the Lorentz invariance of the classical wave equation},
 Comm. Pure Appl. Math. {\bf 38} (1985), 321--332.

\bibitem[K07]{Kub06}
{\sc H. Kubo},
{\it Uniform decay estimates for the wave equation in an exterior domain},
\lq\lq Asymptotic analysis and singularities\rq\rq, pp.~31--54, Advanced Studies in Pure Mathematics
47-1, Math. Soc. of Japan, 2007.

\bibitem[K12]{KP}
{\sc H. Kubo},
{\it Global existence for nonlinear wave equations in an exterior domain in 2D},
ArXiv: 1204.3725v2.

\bibitem[Ku93]{k93}
{\sc K. Kubota},
{\it Existence of a global solutions to a semi-linear wave equation
with initial data of non-compact support in low space dimensions},
 Hokkaido Math. J. {\bf 22} (1993), 123--180.


\bibitem[M75]{Mor75}
{\sc C. S.~Morawetz},
{\it Decay for solutions of the exterior problem for the wave equation},
Comm. Pure Appl. Math. {\bf 28} (1975), 229--264.

\bibitem[SS03]{SeSh03}
{\sc P. Secchi and Y. Shibata},
{\it On the decay of solutions to the 2D Neumann exterior problem for the wave equation},
J. Differential Equations
{\bf 194} (2003), 221--236.

\bibitem[SN89]{SN89}
{\sc Y. Shibata and G. Nakamura},
{\it On a local existence theorem of Neumann problem for some quasilinear hyperbolic systems of 2nd order },
Math. Z,
{\bf 202} (1989), 1--64.

\bibitem[SSW11]{SSW}
{\sc H.F. Smith, C.D. Sogge, and C. Wang},
{\it Strichartz Estimates for Dirichlet-Wave Equations in Two Dimensions with Applications},
Transactions Amer. Math. Soc. \bf 364 \rm (2012), 3329-3347.

\bibitem[V75]{Vai75}
{\sc B.R. Vainberg},
{\it
The short-wave asymptotic behavior of the solutions of stationary problems,
and the asymptotic behavior as $t\rightarrow \infty $ of the solutions of
nonstationary problems},
(Russian) Uspehi Mat. Nauk
{\bf 30} (1975), 3-55.

\end{thebibliography}
\end{document}